\newcommand{\N}{\mathbb{N}}
\newcommand{\R}{\mathbb{R}}
\newcommand{\E}{\mathbb{E}}
\newcommand{\Z}{\mathbb{Z}}
\DeclareMathOperator*\uplim{\overline{lim}}
\newcommand{\Int}[2]{%
	\mathchoice%
	{{\displaystyle\int_{#1}^{#2}}}%
	{{\displaystyle\int_{#1}^{#2}}}%
	{\int_{#1}^{#2}}%
	{\int_{#1}^{#2}}%
}
\newcommand{\f}[2]{\frac{#1}{#2}}
\newcommand{\abs}[2][9]{%
	\ifthenelse{#1 = 0}{#2}{}%
	\ifthenelse{#1 = 1}{|#2|}{}%
	\ifthenelse{#1 = 2}{\big|#2\big|}{}%
	\ifthenelse{#1 = 3}{\Big|#2\Big|}{}%
	\ifthenelse{#1 = 4}{\bigg|#2\bigg|}{}%
	\ifthenelse{#1 = 5}{\Bigg|#2\Bigg|}{}%
	\ifthenelse{#1 = 9}{\left|#2\right|}{}%
}
\newcommand{\norme}[2][1]{%
	\ifthenelse{#1 = 0}{#2}{}%
	\ifthenelse{#1 = 1}{\Arrowvert #2\Arrowvert}{}%
	\ifthenelse{#1 = 2}{\big\Arrowvert#2\big\Arrowvert}{}%
	\ifthenelse{#1 = 3}{\Big\Arrowvert#2\Big\Arrowvert}{}%
	\ifthenelse{#1 = 4}{\bigg\Arrowvert#2\bigg\Arrowvert}{}%
	\ifthenelse{#1 = 5}{\Bigg\Arrowvert#2\Bigg\Arrowvert}{}%
	\ifthenelse{#1 = 9}{\left\Arrowvert#2\right\Arrowvert}{}%
}
\newcommand{\petito}[1][]{%
	\ifthenelse{%
		\equal{#1}{}%
	}{%
		\text{o}%
	}{%
		\underset{#1}{\text{o}}%
	}%
}
\newcommand{\grando}[1][]{%
	\ifthenelse{%
		\equal{#1}{}%
	}{%
		\text{O}%
	}{%
		\underset{#1}{\text{O}}%
	}%
}
\numberwithin{equation}{section}
\theoremstyle{plain}
\newtheorem{theorem}{Theorem}[section]
\newtheorem{lemma}[theorem]{Lemma}
\newtheorem{proposition}[theorem]{Proposition}
\newtheorem{corollary}[theorem]{Corollary}
\theoremstyle{definition}
\newtheorem{definition}[theorem]{Definition}
\theoremstyle{remark}
\title[Dynamical interface above a hard wall]{Dynamical interface above a hard wall \\
and reflected SPDE on the half-line}
\author{Pierre Faugère}
\address{Universit\'e Paris Cit\'e, CNRS, Laboratoire de Probabilit\'es, Statistique et Mod\'elisation, UMR 8001, F-75205 Paris, France}
\email{faugere@lpsm.paris}
\author{Cyril Labbé}
\address{Universit\'e Paris Cit\'e, Laboratoire de Probabilit\'es, Statistique et Mod\'elisation, UMR 8001, F-75205 Paris, France and Institut Universitaire de France (IUF).}
\email{clabbe@lpsm.paris}
\begin{document}

\begin{abstract}
	We consider a dynamical random interface on the infinite lattice $\N$ evolving according to a "corner flip" dynamic above a hard wall, with an additional pinning at the origin. 
	We study the stationary fluctuations under a diffusive scaling and prove convergence in law towards the solution of an SPDE of Nualart-Pardoux's type, namely the Reflected Stochastic Heat Equation on the half-line. We also obtain that the law of the 3-dimensional Bessel process is an invariant measure for this SPDE.
\end{abstract}

\maketitle
\tableofcontents

\section{Model and main results}

\subsection{Discrete dynamic above a hard wall}
We consider a Markov process $(h_t)_{t \geq 0}$ 
with state space 
\begin{equation}
    \mathcal{X}:=\left\lbrace h \in \N^\N \;: \; \forall n \in \N \quad  \abs{h(n+1)-h(n)}=1; \quad h(0)=0  \right\rbrace
\end{equation}
solution of the following system of stochastic differential equations
\begin{equation} \label{semimartingale eq poisson}
    \left\lbrace 
    \begin{array}{ll}
         &dh_t(n)=\Delta h_t(n) 1 \left\lbrace h_t(n) +\Delta h_t(n) \geq 0 \right\rbrace  \: dN_t(n) \qquad \forall n \in \N^*:= \N \backslash\{0\} \\
         &h_0=\zeta \in \mathcal{X}
    \end{array}
    \right.
\end{equation}
\noindent where we used the notation $\Delta h(n):=h(n+1)+h(n-1)-2h(n) \in \{ -2, 0, 2 \}$ for the discrete Laplacian and where $(N_\cdot(n))_{n \in \N}$ is a family of independent Poisson processes of parameter one.
The process $(h_t)_{t \geq 0}$ corresponds to a random interface evolving according to the "corner flip" dynamic and constrainded to remain above a hard wall at height zero, see Figure \ref{figure} for a graphical explanation. As we will see later on, the law $\pi$ on $\mathcal{X}$ of a symmetric random walk $(X_n)_{n \in \N}$ starting from zero and conditioned to remain non-negative is an invariant distribution for the process $(h_t)_{t \geq 0}$. We will work under the \textit{diffusive scaling}, meaning that for $\epsilon \in (0,1]$ we will consider the rescaled process $h^\epsilon$
\begin{equation}
    \forall x \in \epsilon\N, \quad \forall t\geq 0, \qquad h^\epsilon_t(x):=\sqrt{\epsilon} h_{\epsilon^{-2}t}(\epsilon^{-1}x).
\end{equation}
For fixed $t \geq 0$, we shall consider $h^\epsilon_t$ as a continuous function on $[0,\infty)$ by linear interpolation at its values on the lattice $\epsilon \N$. When viewed this way, $h^\epsilon$ is a random element of the Skorokhod space $D([0,\infty), C([0,\infty)))$, where $C([0,\infty))$ denotes the space of real continuous function on $[0,\infty)$ endowed with the local uniform topology. For each $\epsilon \in (0,1]$, the law $\pi^\epsilon$ on $C([0,\infty))$ obtained as the pushforward of $\pi$ through the above rescaling is invariant for $(h^\epsilon_t)_{t \geq 0}$. Moreover, by a generalization of Donsker's invariance principle \cite{bryndoney}, the family of stationary laws converges in the limit $\epsilon \to 0$ towards the law of the 3-dimensional Bessel process. Our aim is to study the scaling limit of the discrete dynamic -- starting from equilibrium -- as $\epsilon$ goes to zero, and to describe the limiting object in the continuum, providing a sort of \textit{dynamical invariance principle}.\\

This model presents two main features: the presence of the wall constraint, and the fact that the interface lives on an unbounded spatial domain. Without the wall constraint and in unbounded spatial domain, convergence of fluctuations towards the additive stochastic heat equation is known. The presence of the wall is expected to induce a reflection term in the stochastic PDE obtained in the continuum. With the wall constraint but on a segment, this convergence towards Nualart-Pardoux's equation was proven in \cite{etheridge2015scaling}. 
In other words, taken separately, each of these two problems has been solved. The aim of this paper is to overcome both difficulties at the same time.\\

Let us also mention some works on related topics. First, the discrete dynamic above a hard wall on the whole lattice $\Z$ was studied in \cite{dunlopferrari}, where it is proved that the model exhibits a phenomenon of entropic repulsion. Second, a convergence result towards the solution of Nualart-Pardoux's reflected SPDE on a segment for a system of coupled oscillators driven by SDEs of Skhorokhod type was proven in \cite{funakiolla}. The approximating model from \cite{funakiolla} differs from the one from \cite{etheridge2015scaling} or the one from this present work since the interface takes continuous values rather than discrete ones.

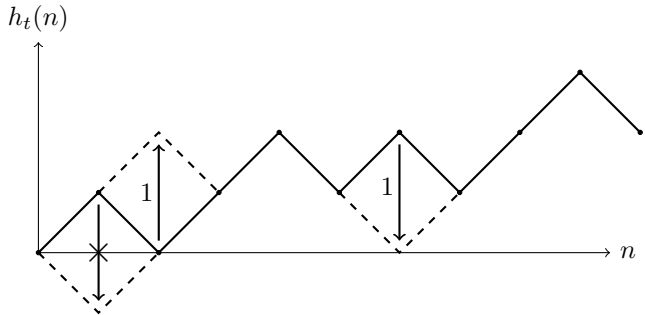
\begin{figure} \label{figure}
\centering
\begin{tikzpicture}[scale=0.8, every node/.style={font=\small}]

\draw[->] (0,0) -- (9.5,0) node[right] {$n$};
\draw[->] (0,0) -- (0,3.5) node[above] {$h_t(n)$};

\coordinate (A) at (0,0);
\coordinate (B) at (1,1);
\coordinate (C) at (2,0);
\coordinate (D) at (3,1);
\coordinate (E) at (4,2);
\coordinate (F) at (5,1);
\coordinate (G) at (6,2);
\coordinate (H) at (7,1);
\coordinate (I) at (8,2);
\coordinate (J) at (6,2);
\coordinate (K) at (1,-1);
\coordinate (L) at (2,2);
\coordinate (M) at (5,3);
\coordinate (N) at (9,3);
\coordinate (O) at (6,0);
\coordinate (P) at (10,2);

\draw[thick] (A) -- (B) -- (C) -- (D) -- (E) -- (F) -- (G) -- (H) -- (I) -- (N) -- (P);
\draw[thick, dashed] (A) -- (K) -- (C);
\draw[thick, dashed] (B) -- (L) -- (D);
\draw[thick, dashed] (F) -- (O) -- (H);

\foreach \pt in {A,B,C,D,E,F,G,H,I,J,N,P}
  \filldraw[black] (\pt) circle (1pt);

\draw[->, thick] (2,0.2) -- (2,1.8);
\node at (1.8,1) {$1$};

\draw[->, thick] (1,0.8) -- (1,-0.8);
\node at (1,0) {\Large $\times$};

\draw[->, thick] (6,1.8) -- (6,0.2) ;
\node at (5.8,1.1) {$1$};

\end{tikzpicture}
\caption{Graphical representation of the jump rates for the discrete dynamic. To each is site is associated a random Poisson clock of parameter one, independent from those of the other sites. Every time a clock rings, if the corresponding site forms a corner, we flip it, except if the flipped interface takes negative values. Here, non-crossed arrows represent possible transitions with their associated rate, while the crossed arrow represents a forbidden transition.}
\end{figure}

\subsection{Stochastic PDE with reflection}
Let us introduce in this paragraph the \textit{stochastic heat equation with reflection} on the half-line which will be obtained after taking the scaling limit, that is to say in the limit $\epsilon \to 0$. We fix a \textit{cylindrical Wiener process}, that is an $\mathcal{S}'([0,\infty))$-valued random process $(W_t)_{t \geq 0}$ such that for all $\varphi \in C^\infty_c([0,\infty))$, $(W_t(\varphi))_{\geq 0}$ is a Brownian motion with variance $\norme{\varphi}_{L^2([0,\infty))}^2$. Note that the derivative in time, $\dot{W}$, is then a space-time white noise on $[0,\infty) \times [0,\infty)$. We consider the following equation on the pair $(u,\eta)$
\begin{equation} \label{Nualar Pardoux eq}
    \left\lbrace
    \begin{array}{ll}
         &\partial_t u (t,x)= \partial_{xx}^2 u (t,x) + \sqrt{2} \dot{W}_t(x) + \eta (dt,dx) \qquad \forall t \geq 0, \quad \forall x \in [0,\infty) \\
         &u(0,x) =u_0, \; u(t,0)=0 \qquad \forall t \geq 0 \\
         &u \geq 0, \; d\eta \geq 0, \; \int u \; d\eta =0
    \end{array}
    \right.
\end{equation}
starting from a fixed initial condition $u_0 \geq 0$ with $u_0 \in \mathcal{C}_\rho$ for some $\rho >0$,  where
\begin{equation*}
    \mathcal{C}_\rho:= \left\lbrace f \in C([0,\infty)) \:: \:  f(0)=0, \; \displaystyle{\sup_{x \in [0,\infty)}} \abs{f(x)} e^{-\rho x} < \infty \right\rbrace.
\end{equation*}
Let us be more precise about the notion of solution for the above equation.
\begin{definition} \label{definition de l'eq reflechie}
We say that a pair $(u ,\eta)$ is a solution to \eqref{Nualar Pardoux eq} if
\begin{enumerate}[(i)]
    \item $\left(u(t,\cdot) \; , \; t \geq 0 \right)$ is a continuous $\mathcal{C}_\rho$-valued stochastic process.
    \item $u \geq 0$.
    \item $\eta$ is a random measure on $[0,\infty) \times (0,\infty)$ such that for all compact $[0,T]\times [a,b] \subseteq [0,\infty) \times (0,\infty)$, $\eta \left( [0,T] \times [a,b] \right) < \infty$.
    \item For all $t \geq 0$ and $\varphi \in C^\infty_c( (0,\infty))$
    \begin{multline} \label{Nualart Pardoux variational formulation}
        \left\langle u(t,\cdot), \varphi \right\rangle = \left\langle u_0, \varphi \right\rangle + \Int{0}{t} \left\langle u(s,\cdot) , \varphi '' \right\rangle \: ds
        +\sqrt{2} W_t(\varphi) + \Int{0}{t} \Int{0}{\infty} \varphi(x) \: \eta (ds,dx).
    \end{multline}
    \item \label{support condition} $\Int{}{} u \: d \eta =0$ or equivalently the support of $\eta$ is contained in the zero level set of $u$.
\end{enumerate}
\end{definition}

\noindent Here $\langle \cdot, \cdot \rangle$ denotes the inner product in $L^2([0,\infty), dx)$. The specificity of this equation lies in the measure $\eta$ which imposes a reflection condition. Indeed, the presence of $\eta$ "forces" the solution to remain non-negative, while the support condition \eqref{support condition} ensures that the measure only acts when $u(t,x)=0$, so that intuitively,  $u$ solves the classical stochastic heat equation whenever $u(t,x)>0$. Reflected stochastic PDEs of this type were first introduced by Nualart and Pardoux, who studied in \cite{Nualart1992WhiteND} the case where the spatial domain is the segment $[0,1]$ with Dirichlet boundary conditions, and proved strong existence and uniqueness for the problem. Our case is different since we consider $[0,\infty)$ as a spatial domain with pinning at the origin, for which strong existence and uniqueness was proved in \cite[Theorem 2.6]{hambly2019reflected}.\\
Finally, note that the reflection measure $\eta$ is far from being a trivial object. Let us illustrate this with some properties of its support proven by Dalang, Mueller and Zambotti in \cite{zambottihitting} in the case where the spatial domain is the segment $[0,1]$. For every fixed $t >0$, almost surely for every $x \in (0,1)$, $u(t,x)>0$. Consequently, by the support condition, for every $t \geq 0$, almost surely $\mathcal{Z}(t)= \emptyset$, where $\mathcal{Z}(t):= \left\lbrace x \in (0,1) \; : \; (t,x) \in \textrm{supp}(\eta) \right\rbrace$. This means that the reflection measure only acts at exceptional times $t \geq 0$, but still impacts globally the behavior of the solution. More precisely, with positive probability, there exists at least an exceptional time $t > 0$, such that the cardinality of $\mathcal{Z}(t)$ is at least three. On the other hand, almost surely, at all times  $t>0$, the cardinality of $\mathcal{Z}(t)$ is upper bounded by four. Let us also mention that $\eta$ can be interpreted as a local time of $u$ but not exactly in the classical sense, see \cite{zambottioccupation}.

\subsection{From the semimartingale equation to a semi-discrete PDE} To understand the connection between the discrete dynamic and the reflected stochastic PDE, let us display a convenient rewriting of equation \eqref{semimartingale eq poisson}. First, under the diffusive scaling, equation \eqref{semimartingale eq poisson} becomes
\begin{equation} \label{rescaled semimartingale eq}
    \forall x \in \epsilon \N^* \qquad dh^\epsilon_t(x):= 
    \Delta^\epsilon h^\epsilon_t (x) 1 \left\lbrace h_t^\epsilon(x) +\Delta^\epsilon h^\epsilon_t (x) \geq 0  \right\rbrace
    \: dN^\epsilon_t(x)
\end{equation}
where $(N^\epsilon_\cdot(x))_{x \in \epsilon \N^*}$ is the family of Poisson processes defined by $N^\epsilon_t(x):=N_{\epsilon^{-2}t}(\epsilon^{-1}x)$ for $x \in \epsilon \N^*$, and where we used the notation $\Delta^\epsilon f(x):= f(x+\epsilon)+f(x-\epsilon)-2f(x)$.
Second, to make the approximate white noise term appear, we split in \eqref{rescaled semimartingale eq} the Poisson term into a martingale and a drift term by considering the family of martingales $(M^\epsilon_\cdot(x))_{x \in \epsilon \N^*}$ defined by $M^\epsilon_t(x):=N^\epsilon_t(x)-\epsilon^{-2} t$  for $x \in \epsilon \N^*$, obtaining
\begin{multline} \label{second semimartingale eq}
    \forall x \in \epsilon \N^* \qquad 
    dh^\epsilon_t(x)=
    \f{1}{\epsilon^2} \Delta^\epsilon h^\epsilon_t(x) 1 \left\lbrace h^\epsilon_t(x) +\Delta^\epsilon h^\epsilon_t(x) \geq 0 \right\rbrace \: dt  \\ + \Delta^\epsilon h^\epsilon_t(x) 1 \left\lbrace h^\epsilon_t(x) +\Delta^\epsilon h^\epsilon_t(x) \geq 0 \right\rbrace \: dM^\epsilon_t(x).
\end{multline}
This leads us to introduce the \textit{discrete noise}
\begin{equation}
    W^\epsilon_t(dx):= 
    \f{\epsilon }{\sqrt{2}} \sum_{k \in \epsilon \N^*} \Int{0}{t} \Delta^\epsilon h^\epsilon_s(k) 1 \left\lbrace h_s^\epsilon (k) +\Delta^\epsilon h^\epsilon_s (k) \geq 0 \right\rbrace \delta_k (dx) \, dM_s^\epsilon(k)
\end{equation}
which defines an $S'([0,\infty))$-valued random process $(W^\epsilon_t)_{t \geq 0}$.
Third, to make the reflection term appear, we split in \eqref{second semimartingale eq} the drift term appropriately. Using the fact that for all $x \in \epsilon \N^*, t \geq 0$, $\Delta^\epsilon h_t(x) \in \left\lbrace -2\sqrt{\epsilon},0, 2\sqrt{\epsilon} \right\rbrace$, we obtain
\begin{multline}  \label{third semimartingale equation}
     \forall x \in \epsilon \N^* \qquad dh^\epsilon_t(x)=
    \f{1}{\epsilon^2}  \Delta^\epsilon h^\epsilon_{t} (x) dt 
    +\f{2 \sqrt{\epsilon}}{\epsilon^2} 1 \left\lbrace h^{\epsilon}_{t}(x)+\Delta^\epsilon h^\epsilon_{t}(x) < 0 \right\rbrace dt
    +\f{\sqrt{2}}{\epsilon} dW_t^\epsilon (dx) .
\end{multline}
This leads us to introduce the \textit{discrete reflection measure}
    \begin{equation}
        \eta^\epsilon(dt,dx):= \f{2}{\sqrt{\epsilon}} \sum_{k \in \epsilon \N^*}{} 1 \left\lbrace h_t^\epsilon (k) +\Delta^\epsilon h^\epsilon_t (k) <0 \right\rbrace \delta_k (dx) \, dt
    \end{equation}
which is a random element of the subspace $\mathbb{M}$ of the space of Borel measures on $ [0,\infty) \times [0,\infty)$ defined by
\begin{equation} \label{space M of measures}
    \mathbb{M} := \left\lbrace \nu \; : \; \forall T,A \geq 0  \qquad \Int{[0,T] \times [0,A]}{} x \nu(dt,dx) < \infty \right\rbrace.
\end{equation}
which we endow with the vague topology. Fourth, testing the semimartingale equation \eqref{third semimartingale equation} against some $\varphi \in C^\infty_c( [0,\infty))$, and using the discrete noise and reflection term previously introduced, it becomes
\begin{multline} \label{variational semimartingale equation}
    \langle h^\epsilon_t, \varphi \rangle_\epsilon  = \langle h^\epsilon_0, \varphi \rangle_\epsilon +
     \Int{0}{t} \f{1}{\epsilon^2} \langle \Delta^\epsilon h^\epsilon_s, \varphi(s,\cdot) \rangle_\epsilon \: ds
    + \sqrt{2} W^\epsilon_t(\varphi)
    + \Int{0}{t} \Int{0}{\infty} \varphi (x) \: \eta^\epsilon (ds,dx)
\end{multline}
where we used the notation $\left\langle \cdot, \cdot \right\rangle_\epsilon:= \epsilon \langle \cdot , \cdot \rangle_{l^2(\epsilon \N)}$. Let us draw the reader's attention on the parallel between, on the one hand, the weak formulation \eqref{Nualart Pardoux variational formulation} of the reflected SPDE in the continuous setting and, on the other hand, the semi-discrete equation \eqref{variational semimartingale equation} for the random interface.

\subsection{Main results}

We may now state our main result on the convergence of the stationary fluctuations of the discrete interface model towards a reflected stochastic PDE. In the following statement, $\zeta$ will denote a $\pi$-distributed random variable, independent of the collection of Poisson processes $(N_\cdot(n))_{n \in \N}$.

\begin{theorem} \label{main theorem}
    Consider the random process $(h_t)_{t \geq 0}$ defined by \eqref{semimartingale eq poisson} and starting from an initial condition $\zeta$ distributed according to the stationary measure $\pi$. Consider the associated sequence $(h^\epsilon, W^\epsilon, \eta^\epsilon)_{\epsilon \in (0,1]}$ of  $D([0,\infty),\mathcal{C}_\rho)\times D( [0,\infty), \mathcal{S}'([0,\infty ) )) \times \mathbb{M}$-valued random variables, the latter space being endowed with the product topology. Then $$(h^\epsilon, W^\epsilon, \eta^\epsilon) \xrightarrow[\epsilon \to 0]{\mathcal{L}} (u,W,\eta)$$ where
    \begin{enumerate}[(i)]
        \item $W$ is a cylindrical Wiener process,
        \item $(u,\eta)$ is the solution of the reflected stochastic PDE \eqref{Nualar Pardoux eq} starting from a random initial condition $u_0$ distributed according to the law of the 3-dimensional Bessel process, and independent of $W$.
    \end{enumerate}
\end{theorem}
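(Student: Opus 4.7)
The plan is to establish tightness of the triple $(h^\epsilon,W^\epsilon,\eta^\epsilon)$ in the product topology, identify every subsequential limit as a solution of \eqref{Nualar Pardoux eq} with the prescribed initial law and driving noise, and then invoke the strong uniqueness from \cite[Theorem 2.6]{hambly2019reflected} to upgrade subsequential convergence into convergence in law of the full sequence. Stationarity is decisive throughout: the spatial regularity of $h^\epsilon_t$ is encoded in $\pi^\epsilon$ and, by the Bryn-Jones--Doney invariance principle, converges to the law of the $3$-dimensional Bessel process on $\mathcal{C}_\rho$. What genuinely requires work is the temporal regularity, which I would control via a Kolmogorov-type estimate on $\langle h^\epsilon_t-h^\epsilon_s,\varphi\rangle_\epsilon$ read off from the semimartingale decomposition \eqref{variational semimartingale equation}, combined with uniform moment bounds in $\mathcal{C}_\rho$ coming from exponential controls under $\pi^\epsilon$.

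For the discrete noise $W^\epsilon$, I would apply a functional martingale central limit theorem. Using $d\langle M^\epsilon(k)\rangle_s=\epsilon^{-2}\,ds$, the predictable bracket is
\[
\langle W^\epsilon(\varphi)\rangle_t = \tfrac{1}{2}\sum_{k\in\epsilon\N^*}\varphi(k)^2\int_0^t (\Delta^\epsilon h^\epsilon_s(k))^2\,\mathbf{1}\{h^\epsilon_s(k)+\Delta^\epsilon h^\epsilon_s(k)\geq 0\}\,ds.
\]
Since $(\Delta^\epsilon h^\epsilon)^2=4\epsilon$ on corners, stationarity reduces the convergence of this bracket to $t\norme{\varphi}_{L^2}^2$ to a law-of-large-numbers statement about the $\pi^\epsilon$-density of flippable corners, which equals $\tfrac{1}{2}$ almost everywhere; the loss near the wall (where peaks of height $\leq 2\sqrt{\epsilon}$ cannot flip) contributes only an $O(\sqrt{\epsilon})$ correction by entropic repulsion. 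The Lindeberg condition is immediate since single jumps of $W^\epsilon$ are $O(\epsilon^{3/2})$.

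Tightness of $\eta^\epsilon$ in $\mathbb{M}$ is obtained by reading \eqref{variational semimartingale equation} as an identity for $\int_0^t\!\int\varphi\,d\eta^\epsilon$: for $\varphi$ smooth, nonnegative and compactly supported in $(0,\infty)$, the reflection term is dominated by $\langle h^\epsilon_t,\varphi\rangle_\epsilon$, $\langle h^\epsilon_0,\varphi\rangle_\epsilon$, the martingale $W^\epsilon_t(\varphi)$, and $\int_0^t\langle h^\epsilon_s,\varphi''\rangle_\epsilon\,ds$, all bounded in probability uniformly in $\epsilon$ by stationarity. Joint tightness in the product space follows. Passing to the limit in \eqref{variational semimartingale equation} yields \eqref{Nualart Pardoux variational formulation} for any limit point $(u,W,\eta)$, with $u\geq 0$ as a pointwise limit of nonnegative quantities and $\eta\geq 0$ as a vague limit of nonnegative measures; the initial law is $3$-Bessel by the aforementioned invariance principle, and $W$ is a cylindrical Wiener process by the CLT above.

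The genuinely hard step is the Nualart--Pardoux contact condition $\int u\,d\eta=0$, which does not follow mechanically from the discrete constraint. Following the spirit of \cite{etheridge2015scaling}, I would observe that $\eta^\epsilon$ is supported on sites where $h^\epsilon+\Delta^\epsilon h^\epsilon<0$, hence where $h^\epsilon\leq 2\sqrt{\epsilon}$, so that $\int_K h^\epsilon\,d\eta^\epsilon=O(\sqrt{\epsilon})$ on any compact $K\subset [0,\infty)\times(0,\infty)$. Passing this bound to the limit requires a lower-semicontinuity argument for $(f,\mu)\mapsto\int f\,d\mu$ under the joint (local uniform)$\times$(vague) topology, together with tail controls in the $e^{-\rho x}$ weight to absorb the contribution of large $x$ on the unbounded half-line. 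This combination of a nonclassical contact identification with the weighted tail estimates on an unbounded domain is where I expect the bulk of the technical work to concentrate. Once $\int u\,d\eta=0$ is secured, the limit triple solves \eqref{Nualar Pardoux eq}, and pathwise uniqueness concludes the proof.
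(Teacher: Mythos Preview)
Your overall architecture matches the paper: tightness of the triple, identification of limit points via \eqref{variational semimartingale equation}, then strong uniqueness. Your treatment of $W^\epsilon$ (bracket convergence via a corner-counting LLN plus a jump-size Lindeberg condition) and of $\eta^\epsilon$ (expressing the reflection integral through the other terms of \eqref{variational semimartingale equation}) is essentially what the paper does in Sections~4 and~5. Your argument for the contact condition is also the right one, and in fact simpler than you suggest: since the test functions are compactly supported in $(0,\infty)$, no tail control in the $e^{-\rho x}$ weight is needed---one just uses that $h^\epsilon\equiv\sqrt{\epsilon}$ on the support of $\eta^\epsilon$ and that $\int x\psi\,d\eta^\epsilon$ is tight.

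The genuine gap is in your plan for the temporal tightness of $h^\epsilon$. You propose to read a Kolmogorov estimate on $\langle h^\epsilon_t-h^\epsilon_s,\varphi\rangle_\epsilon$ directly off the semimartingale decomposition \eqref{variational semimartingale equation}. But that decomposition contains the reflection integral $\int_s^t\!\int\varphi\,d\eta^\epsilon$, for which you have no a~priori moment bound on small time increments; your only control on $\eta^\epsilon$ comes later, and it is itself derived from the tightness of $h^\epsilon$ (so the reasoning is circular), and in any case yields only tightness, not the $L^p$ bounds on increments that Kolmogorov requires. The paper is explicit about this obstacle and resolves it by the Lyons--Zheng decomposition: using reversibility under $\pi$ and stationarity, the time increment of $\hat g^\epsilon(\zeta)$ is written as an average of a forward and a backward martingale, so the drift and reflection terms cancel identically, and one is left with pure martingale increments to which a double BDG argument applies (Lemma~\ref{Holder moment estimate sobolev}).

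A second, related issue is topological. Controlling $\langle h^\epsilon_t-h^\epsilon_s,\varphi\rangle_\epsilon$ for each $\varphi$ gives at best tightness in a distributional topology; it does not deliver tightness in $D([0,\infty),\mathcal{C}_\rho)$, which is what the limiting equation \eqref{Nualar Pardoux eq} requires. The paper obtains the strong topology by a genuinely different route: it combines the Lyons--Zheng martingale bound in $\mathcal H^{-s_0}$ with a static $\mathcal W^{s_1,r}$ bound coming from the moment estimates under $\pi^\epsilon$, interpolates between the two Sobolev scales, and embeds into a H\"older space $C^b$ to get a moment estimate on $\|\bar g^\epsilon_t-\bar g^\epsilon_s\|_{C^b}$ (Lemma~\ref{lemme Cb}), after which Kolmogorov gives uniform-in-time control in $\mathcal C_\rho$. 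Your sketch does not contain a mechanism for upgrading weak temporal regularity to strong spatial norms, and this is precisely the ``infinite-volume'' difficulty the paper highlights.
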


\begin{corollary}\label{Corollary}
    The law of the 3-dimensional Bessel process starting from zero is invariant for the reflected stochastic PDE \eqref{Nualar Pardoux eq}.
\end{corollary}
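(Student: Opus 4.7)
The plan is to deduce the corollary directly from Theorem \ref{main theorem}, by transferring the stationarity of $\pi$ under the discrete dynamics to invariance of the Bessel law under the reflected SPDE \eqref{Nualar Pardoux eq}. The strategy is a standard ``tightness + stationarity'' argument: show that marginals converge in two compatible ways, and invoke uniqueness of limits.

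First, I would fix an arbitrary time $t \geq 0$. For every $\epsilon \in (0,1]$, the process $(h^\epsilon_s)_{s \geq 0}$ is started from $\zeta \sim \pi$, so by stationarity the marginal $h^\epsilon_t$ has law $\pi^\epsilon$, the pushforward of $\pi$ through the diffusive rescaling. By the generalized Donsker invariance principle of Bryn-Jones and Doney cited in the introduction, $\pi^\epsilon$ converges weakly in $\mathcal{C}_\rho$ (for any $\rho>0$) to the law $\mu_{\textrm{Bes}}$ of the $3$-dimensional Bessel process started from zero.

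Second, I would extract the time-$t$ marginal from the functional convergence provided by Theorem \ref{main theorem}. The limit $u$ is, by part (i) of Definition \ref{definition de l'eq reflechie}, almost surely continuous in time with values in $\mathcal{C}_\rho$; consequently the evaluation map $\omega \mapsto \omega(t)$ from $D([0,\infty),\mathcal{C}_\rho)$ to $\mathcal{C}_\rho$ is continuous at almost every realization of $u$. By the continuous mapping theorem applied to the first coordinate of the joint convergence in Theorem \ref{main theorem}, one obtains
\begin{equation*}
    h^\epsilon_t \xrightarrow[\epsilon \to 0]{\mathcal{L}} u_t \qquad \text{in } \mathcal{C}_\rho.
\end{equation*}
Combining with the previous paragraph and uniqueness of the weak limit in $\mathcal{C}_\rho$, one concludes that $u_t$ has law $\mu_{\textrm{Bes}}$. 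Since $t \geq 0$ was arbitrary, the Bessel law is preserved by the semigroup of the reflected SPDE, which is exactly the content of Corollary \ref{Corollary}.

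There is essentially no serious obstacle here: all the hard work (tightness, identification of the limit, handling of the reflection term) has been packaged into Theorem \ref{main theorem}. The only point deserving mild care is the passage from Skorokhod convergence of the full trajectory to convergence of a single time marginal, which is legitimate because the limiting process has continuous sample paths and thus every time is a continuity point almost surely.
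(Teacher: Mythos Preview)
Your argument is correct and follows essentially the same route as the paper's proof: stationarity of $h^\epsilon$ gives $h^\epsilon_t \sim \pi^\epsilon$, the Bryn--Jones/Doney invariance principle identifies the limit of $\pi^\epsilon$ as the Bessel law, and Theorem~\ref{main theorem} identifies it as the law of $u_t$. Your write-up is simply more explicit than the paper's about the continuous-mapping step needed to pass from Skorokhod convergence to convergence of the time-$t$ marginal.
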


\noindent The general strategy of the proof is to show that equation \eqref{variational semimartingale equation} becomes in the limit \eqref{Nualart Pardoux variational formulation}. For this we prove individually tightness of the sequences $(h^\epsilon)_{\epsilon \in (0,1]}$, $(W^\epsilon)_{\epsilon \in (0,1]}$, and $(\eta^\epsilon)_{\epsilon \in (0,1]}$ and prove that any limit point $(u,\eta,W)$ as $\epsilon \to 0$ is solution to \eqref{Nualart Pardoux variational formulation}. Let us comment more precisely on the proof techniques. For tightness of both $(h^\epsilon)_{\epsilon \in (0,1]}$ and $(W^\epsilon)_{\epsilon \in (0,1]}$, the proofs rely on two main ingredients: static and dynamical properties. Static properties are quantitative estimates related to the invariance principle for the random walk conditioned to remain non-negative, while dynamical estimates leverage the martingale structure of the dynamic, using a double Burkholder-Davies-Gundy inequality technique inspired by \cite{giacomin} and \cite{etheridge2015scaling}. However, for the tightness of $(h^\epsilon)_{\epsilon \in (0,1]}$ more specifically, there is an obstacle coming from the fact that the semimartingale equation \eqref{second semimartingale eq} governing the dynamic comes with a reflection term, delicate to control a priori. We overcome this using \textit{Lyons-Zheng's decomposition} \cite{lyonszheng}, which relies on the reversibility of the dynamic and stationarity, in order to reduce the problem to bounds on moments of increments of some martingale, for which we can then apply the double BDG technique aforementioned. Let us comment on the challenges specific to the infinite volume case that we had to overcome, which are new compared to \cite{etheridge2015scaling}. The main difficulty comes from the fact that in order to be solution of the reflected stochastic PDE \eqref{Nualar Pardoux eq}, we need some control at infinity on the spatial growth of the solution at each fixed time $t \geq 0$. This requires estimates for the Sobolev norm of the discretization (uniform in $\epsilon \in (0,1]$) in infinite volume.

\subsection*{Acknowledgements} The work of C.~L. was partially supported by the ANR project RANDOP ANR-24-CE40-3377, and by the Institut Universitaire de France.

\section{Preliminaries: generator and invariant measure} \label{preliminary section}
\subsection{Generator and martingale problem}
As we are in infinite volume, we recall in this paragraph some elements of the theory enabling us to construct the evolving random interface that we consider. More precisely, we want to show that the collection $\lbrace P^\zeta , \; \zeta \in \mathcal{X} \rbrace$ of laws on $D([0,\infty),\mathcal{X})$ induced by $(h_t)_{t \geq 0}$ is a Feller process and identify its generator. First, consider the operator $L$ defined by
\begin{equation} \label{generator}
     Lf(h):= \sum_{n \in \N^*} 1 \left\lbrace h+\Delta h(n)\delta_n \geq 0 \right\rbrace \left[ f(h+\Delta h(n)\delta_n)-f(h) \right]
\end{equation}
for any cylindrical function $f:\mathcal{X} \xrightarrow[]{} \R$. 
As defined, $L$ is a Markov pregenerator and by \cite[Theorem I, 3.9]{liggett1985interacting} its closure $\bar{L}$ is a Markov generator. Second, let us relate $L$ to our dynamic using a martingale problem.
For any $n \in \N^*$, consider the cylindrical function $p_n : h \mapsto h(n)$. A direct computation shows that $Lp_n(h)= \Delta h(n) 1 \left\lbrace h(n)+\Delta h(n) \geq 0 \right\rbrace$. Thus, denoting $M_t(n):= N_t(n) -t$ the family of compensated Poisson processes, $h$ satisfies
\begin{align*}
    p_n(h_t)-p_n(h_0)-\Int{0}{t} Lp_n(h_s)\: ds &= p_n(h_t)-p_n(h_0)-\Int{0}{t}  \Delta h_s(n) 1 \left\lbrace h_s(n)+\Delta h_s(n) \geq 0 \right\rbrace \: ds \\
    &= \Int{0}{t} \Delta h_s(n) 1 \left\lbrace h_s(n)+\Delta h_s(n) \geq 0 \right\rbrace \: dM_s(n).
\end{align*} This shows that the process
\begin{equation} \label{martingale pb}
    \left( p_n(h_t)-p_n(h_0)-\Int{0}{t} Lp_n(h_s)\: ds \right)_{t \geq 0} \textrm{ is a martingale.}
\end{equation}
Since any cylindrical function is a linear combination of the functions $(p_n)_{n \in \N}$, \eqref{martingale pb} extends to any cylindrical function. In other words, for each $\zeta \in \mathcal{X}$, $P^\zeta$ satisfies the martingale problem associated to $L$ and $\zeta$. But by \cite[Theorem I, 5.2]{liggett1985interacting}, the Feller process generated by $\bar{L}$ is the unique solution of the martingale problem associated to $L$. This proves that $\lbrace P^\zeta , \; \zeta \in \mathcal{X} \rbrace$ is the Feller process generated by $\bar{L}$.

\subsection{Reversible measure of the dynamic}
 In this paragraph we introduce the simple random walk conditioned to remain nonnegative and show that it is invariant for the dynamic \eqref{semimartingale eq poisson}. Consider the symmetric simple random walk starting from zero $(X_n)_{n \in \N}$ on the canonical space $(\mathcal{X}, \mathcal{F},P)$. Then the law $\pi$ of the \textit{simple random walk conditioned to remain nonnegative} can be defined by
\begin{equation} \label{egalite caravenna}
    \pi ( B ) := E \left[ (X_n +1 ) 1_B 1_{C_n} \right] \qquad \forall B \in \sigma (X_1, \cdots , X_n )
\end{equation}
where $C_n:= \lbrace X_1 \geq 0, \cdots, X_n \geq 0 \rbrace$. This terminology is justified by the following fact proved in \cite[Theorem 1]{bertoin1994conditioning}
\begin{equation*}
    \pi (\cdot) = \displaystyle{\lim_{n \to \infty} P \left( \cdot \: | \: C_n  \right)}
\end{equation*}
As defined in \eqref{egalite caravenna}, $\pi$ is obtained by Doob $h$-transform of the simple symmetric random walk via the function $h:x \mapsto x+1$ harmonic with respect to the transition semigroup of the simple symmetric random walk and which vanishes at $-1$. Then under $\pi$ the process $(X_n)_{n \in \N}$ is Markovian with state space $\N$, characterized by the following probability transitions \cite{bertoin1994conditioning}
\begin{align}
    \forall k \in \N, \qquad p_{k,k+1}=\f{k+2}{2(k+1)}, \qquad
    p_{k,k-1}=\f{k}{2(k+1)}
\end{align}

\begin{lemma}(Reversibility of $\pi$){\large \textbf{.}}
    For any cynlindrical functions $f,g : \mathcal{X} \longrightarrow \R$
    \begin{equation} \label{reversibility eq}
        \Int{\mathcal{X}}{} Lf(h)g(h) \: \pi (dh) = \Int{\mathcal{X}}{} f(h)Lg(h) \: \pi (dh)
    \end{equation}
	As a consequence the dynamics is reversible with respect to $\pi$.
\end{lemma}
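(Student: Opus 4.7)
The plan is to verify \eqref{reversibility eq} via site-by-site detailed balance with respect to the corner-flip involution. Fix cylindrical $f, g$ depending only on $h(1),\ldots,h(N)$ for some $N\geq 1$, and introduce the site-$n$ corner-flip map $\tau_n(h):=h+\Delta h(n)\delta_n$ together with its domain of legal flips $A_n:=\{h\in\mathcal{X}:\tau_n(h)\in\mathcal{X}\}=\{h:h(n)+\Delta h(n)\geq 0\}$; observe that $\tau_n$ acts trivially whenever $\Delta h(n)=0$, and that for $n>N$ the flip leaves $f$ unchanged. Hence the sum defining $Lf$ has at most $N$ nonzero terms and
\begin{equation*}
 \int Lf\cdot g\,d\pi=\sum_{n=1}^{N}\int \mathbf{1}_{A_n}(h)\bigl[f(\tau_n(h))-f(h)\bigr]g(h)\,\pi(dh).
\end{equation*}
Writing the analogous expansion for $\int f\cdot Lg\,d\pi$ and subtracting, the identity \eqref{reversibility eq} reduces to proving, for each $n\in\{1,\ldots,N\}$, the single equality
\begin{equation*}
 \int \mathbf{1}_{A_n}(h)\,f(\tau_n(h))\,g(h)\,\pi(dh)=\int \mathbf{1}_{A_n}(h)\,f(h)\,g(\tau_n(h))\,\pi(dh).
\end{equation*}

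The heart of the argument is to check that $\tau_n$ is a $\pi$-preserving involution of $A_n$. Involutivity $\tau_n^2=\mathrm{id}$ is immediate: flipping a corner produces another corner whose re-flip restores the original profile. The indicator is invariant, $\mathbf{1}_{A_n}\circ\tau_n=\mathbf{1}_{A_n}$, which expresses the obvious fact that a corner flip is legal iff its reverse is legal. Invariance of $\pi$ under $\tau_n$ is read directly off \eqref{egalite caravenna}: for every admissible discrete path $(0,x_1,\ldots,x_N)$ one has
\begin{equation*}
 \pi\bigl(h(1)=x_1,\ldots,h(N)=x_N\bigr)=(x_N+1)\,2^{-N},
\end{equation*}
and since $\tau_n$ only modifies the coordinate at site $n\leq N$, it leaves $x_N$ (and hence the density) unchanged on the set of admissible paths. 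Carrying out the change of variable $h\mapsto \tau_n(h)$ in the left-hand integral of the displayed identity then yields the right-hand side, and summing over $n$ proves \eqref{reversibility eq}. The announced reversibility of the dynamic follows from the standard fact that symmetry of the generator in $L^2(\pi)$ on a core (here, cylindrical functions) extends to its closure $\bar L$ and characterizes reversibility of the associated Feller semigroup.

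The argument is essentially bookkeeping around one essential computation — the $\tau_n$-invariance of the marginal density $(x_N+1)2^{-N}$ — so there is no serious obstacle. The two points requiring a little care are (i) working at the level of finite-dimensional marginals, so that the change of variable is just a bijection between finitely many admissible paths, and (ii) the verification that the indicator $\mathbf{1}_{A_n}$ is itself $\tau_n$-invariant, which is exactly the statement that allowed transitions of the discrete dynamic come in reciprocal pairs.
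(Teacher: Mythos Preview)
Your approach is the same as the paper's: reduce to showing that the corner-flip $\tau_n$ preserves $\pi$ at the level of finite-dimensional marginals, using the explicit density from \eqref{egalite caravenna}. However, the key computation as written fails for $n=N$.

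The issue is twofold. First, when $n=N$ the flip $\tau_N$ \emph{does} change $x_N$, so the density $(x_N+1)2^{-N}$ is \emph{not} invariant under $\tau_N$; your sentence ``since $\tau_n$ only modifies the coordinate at site $n\le N$, it leaves $x_N$ unchanged'' is simply false at $n=N$. Second, and more fundamentally, the integrand $\mathbf{1}_{A_N}(h)\,f(\tau_N(h))\,g(h)$ is not even measurable with respect to $\sigma(h(1),\ldots,h(N))$: both $A_N$ and $\tau_N$ involve $\Delta h(N)=h(N+1)+h(N-1)-2h(N)$, hence depend on $h(N+1)$. So you cannot compute these integrals using only the $N$-marginal.

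The fix is exactly what the paper does: project onto $\mathcal{X}_{N+1}$ rather than $\mathcal{X}_N$. The density becomes $(x_{N+1}+1)2^{-(N+1)}$, and now every flip at a site $n\in\{1,\ldots,N\}$ leaves $x_{N+1}$ fixed, so the weight is preserved. Equivalently: two paths of length $N+1$ that differ by a single corner flip at some $n\le N$ end at the same height $h(N+1)$ and therefore receive the same $\pi$-mass. With this adjustment your argument goes through verbatim, and the concluding appeal to the extension from a core to $\bar L$ is the same as the paper's citation of \cite[Theorem I, 5.3]{liggett1985interacting}.
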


\begin{proof}
    Let $N \in \N$ large enough such that $f,g$ only depend on the sites $\lbrace 0,\cdots,N \rbrace$. Then consider the restriction map
    \begin{equation*}
    \begin{array}{rl}
         T_{N+1} : \mathcal{X} &\longrightarrow \mathcal{X}_{N+1}  \\
         h&\longmapsto h_{| \lbrace 0, \cdots, N +1 \rbrace }
    \end{array}
    \end{equation*}
    where $\mathcal{X}_N:= \left\lbrace h \in \N^{\lbrace 0,\cdots, N-1 \rbrace} \;: \; \forall n \in \lbrace 0,\cdots, N \rbrace \quad  \abs{h(n+1)-h(n)}=1, \quad h(0)=0  \right\rbrace$.
    The important fact is that by \eqref{egalite caravenna}, two paths of $\mathcal{X}_{N+1}$ that end up at the same height at step $N+1$ are given the same weight under $T_{N+1} \circ \pi$. In particular, 
    $$\forall h \in \mathcal{X}_{N+1}, \quad \forall \, 1 \leq n \leq N \qquad T_{N+1} \circ \pi ( \left\lbrace h+\Delta h (n) \delta_n \right\rbrace )=T_{N+1} \circ \pi (\{ h \})$$
    With this property at hand, \eqref{reversibility eq} follows from a straightforward computation. Then, the fact that \eqref{reversibility eq} implies $\pi$ is reversible is a consequence of \cite[Theorem I, 5.3]{liggett1985interacting}.
\end{proof}

\subsection{The simple random walk conditioned to remain nonnegative}

In this paragraph we state and prove several properties related to the invariant measure $\pi$ which will be useful later.

\begin{lemma}(Transience){\large \textbf{.}} For any $\varphi \in \mathcal{S}([0,\infty])$, the following convergence holds \label{lemme 1 annexe}
    \begin{equation}
        \f{1}{N} \sum_{n \in \N} 1 \left\lbrace X_n=k \right\rbrace \varphi \left( \f{n}{N} \right) \xrightarrow[N \to \infty]{ } 0
    \end{equation}
    almost-surely and in $L^1(\pi)$.
\end{lemma}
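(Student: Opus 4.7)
The plan is to reduce everything to a finite Green's function estimate for the Markov chain $(X_n)$ under $\pi$, after which both convergence modes follow by a one-line pathwise bound.

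The starting point is the Doob $h$-transform description \eqref{egalite caravenna}, which gives, for any fixed $k \in \N$ and $n \in \N$,
\begin{equation*}
    \pi(X_n = k) \;=\; (k+1)\, P\bigl(X_n = k,\, X_1 \geq 0, \ldots, X_n \geq 0\bigr),
\end{equation*}
where on the right-hand side $(X_n)$ is the simple symmetric random walk on $\Z$ starting at $0$. A reflection argument across the level $-1$ rewrites the right-hand side as $(k+1)[P(X_n = k) - P(X_n = -k-2)]$, and the local central limit theorem, applied to this difference (see the second paragraph below), yields $\pi(X_n = k) = O_k(n^{-3/2})$. Summing this bound shows that the Green's function $G(0,k) := \sum_{n \in \N} \pi(X_n = k) < \infty$, and therefore the total occupation time $V_k := \sum_{n} 1 \lbrace X_n = k \rbrace$ satisfies $\E_\pi[V_k] < \infty$ and in particular $V_k < \infty$ $\pi$-almost surely.

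With this at hand, I would conclude as follows. Since $\varphi \in \mathcal{S}([0,\infty])$ is bounded, the quantity of interest admits the pathwise bound
\begin{equation*}
    \abs{\f{1}{N} \sum_{n \in \N} 1 \lbrace X_n = k \rbrace \varphi\!\left(\f{n}{N}\right)} \;\leq\; \f{\norme{\varphi}_\infty}{N}\, V_k.
\end{equation*}
Taking expectations and using $\E_\pi[V_k] < \infty$ gives the $L^1(\pi)$ convergence. For $\pi$-almost sure convergence, fix $\omega$ such that $V_k(\omega) < \infty$; then the sum on the left contains only finitely many nonzero terms, each bounded by $\norme{\varphi}_\infty / N$, and therefore tends to zero as $N \to \infty$.

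The only nontrivial step is the $n^{-3/2}$ decay of $\pi(X_n = k)$. The obstacle here is that one must \emph{not} bound the two terms of the reflection identity separately, since each one is of order $n^{-1/2}$ only and their sum would be divergent; rather, one must exploit the cancellation between $P(X_n = k)$ and $P(X_n = -k-2)$ (with the parity constraint $n+k$ even), which produces a discrete derivative of the Gaussian kernel and thus an additional factor $n^{-1}$. This is a classical ballot-type estimate for the random walk conditioned to remain non-negative, so while the step is routine in spirit, writing it out carefully is the main technical content of the proof.
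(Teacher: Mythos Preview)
Your proof is correct but follows a different route from the paper's. The paper argues as follows: transience of $(X_n)$ under $\pi$ is quoted from \cite{lamperti1960criteria}, which immediately gives $V_k < \infty$ a.s.\ and hence $Z_N \to 0$ a.s.; then, for the $L^1$ convergence, the paper drops the indicator entirely to obtain the \emph{deterministic} domination
\[
|Z_N| \;\le\; \sup_{M\ge 1} \frac{1}{M}\sum_{n\in\N}\Big|\varphi\Big(\frac{n}{M}\Big)\Big| \;=:\; K_\varphi \;<\;\infty
\]
(finite because $\varphi$ is Schwartz), and concludes by dominated convergence.

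Your approach is more quantitative and self-contained: you prove the stronger statement $\E_\pi[V_k]<\infty$ directly, via the reflection identity $\pi(X_n=k)=(k+1)\bigl[P(X_n=k)-P(X_n=-k-2)\bigr]$ and the local CLT cancellation, and then the single pathwise bound $|Z_N|\le \|\varphi\|_\infty V_k / N$ handles both modes of convergence at once and even gives the rate $O(1/N)$. In exchange, you carry out the $n^{-3/2}$ estimate by hand, whereas the paper outsources transience to Lamperti and uses only the boundedness and integrability of $\varphi$. Both arguments are short; yours uses less of the structure of $\varphi$ (only boundedness) but more of the structure of the chain, while the paper's does the opposite.
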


\begin{proof}
    For $N \in \N^*$, let $Z_n:=\f{1}{N} \sum_{n \in \N} 1 \left\lbrace X_n=k \right\rbrace \varphi \left( \f{n}{N} \right)$. By \cite[Theorem 3.1]{lamperti1960criteria}, the process $(X_n)_{n \in \N}$ is transient. Now it follows from the transience of $X$ and the fact that $\varphi$ is bounded that $Z_n$ converges to zero almost surely. Now let $K_\varphi:= \sup \left\lbrace  \f{1}{N} \, \sum_{n \in \N} \varphi \left( \f{n}{N} \right) \: : \: N \in \N \right\rbrace < \infty$ as $\varphi \in \mathcal{S}([0,\infty])$. Then almost surely, for all $n \in \N$, $\abs{Z_n} \le K_\varphi$.
    The $L^1(\pi)$ convergence then holds by dominated convergence. 
\end{proof}

\noindent For $n \in \N^*$ we define the discrete Laplacian by 
\begin{equation}
    \Delta X_n:= X_{n+1} + X_{n-1} -2X_n
\end{equation}

\begin{lemma}(Average number of corners){\large \textbf{.}} For any $\varphi \in \mathcal{S}([0,\infty))$, the following convergence holds \label{lemme 2 annexe}
    \begin{equation} \label{eq lemme 2 annexe}
        \f{1}{N} \sum_{n \in \N} 1 \left\lbrace \Delta X_n \neq 0 \right\rbrace \varphi \left( \f{n}{N} \right) \xrightarrow[N \to \infty]{ } \f{1}{2} \int_{0}^\infty \varphi (x) \: dx
    \end{equation}
    in $L^1(\pi)$.
\end{lemma}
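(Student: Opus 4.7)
The plan is to write $1\{\Delta X_n \neq 0\}$ as the sum of its $\mathcal{F}_n$-conditional expectation and a martingale difference, where $\mathcal{F}_n := \sigma(X_0,\ldots,X_n)$, and to treat each piece separately. Writing $\epsilon_n := X_n - X_{n-1} \in \{-1,+1\}$ for $n \geq 1$, one has $\Delta X_n = \epsilon_{n+1} - \epsilon_n$, so $\{\Delta X_n \neq 0\}$ is the event that the walk changes direction at step $n$. Using the transition probabilities of $X$ under $\pi$, a direct computation gives
\begin{equation*}
    p_n := E\bigl[\,1\{\Delta X_n \neq 0\} \mid \mathcal{F}_n\bigr] = \tfrac{1}{2} + \frac{1\{\epsilon_n = -1\} - 1\{\epsilon_n = +1\}}{2(X_n+1)},
\end{equation*}
so that $|p_n - 1/2| \leq 1/(2(X_n+1))$; the predictable part approaches $1/2$ away from the wall, which is the source of the factor $1/2$ in the limit.

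Setting $Z_n := 1\{\Delta X_n \neq 0\} - p_n$, the left-hand side of \eqref{eq lemme 2 annexe} decomposes as $\tfrac{1}{N}\sum_n \varphi(n/N) p_n + \tfrac{1}{N}\sum_n \varphi(n/N) Z_n$. For the martingale part, $(Z_n)$ is a bounded martingale-difference sequence with respect to $(\mathcal{F}_{n+1})$, and by orthogonality of the increments
\begin{equation*}
    E\!\left[\Bigl(\tfrac{1}{N}\sum_n \varphi(n/N) Z_n\Bigr)^{\!2}\right] \leq \tfrac{1}{N^2} \sum_n \varphi(n/N)^2 = O(1/N),
\end{equation*}
using that $\varphi^2$ is Schwartz. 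Hence this term vanishes in $L^2(\pi)$, and a fortiori in $L^1(\pi)$.

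For the predictable part, the deterministic Riemann sum $\tfrac{1}{2N}\sum_n \varphi(n/N)$ converges to $\tfrac{1}{2}\int_0^\infty \varphi(x)\,dx$. It then suffices to show that $\tfrac{1}{N}\sum_n |\varphi(n/N)|/(X_n+1) \to 0$ in $L^1(\pi)$. Fix $K \geq 1$ and split according to whether $X_n \leq K$: the contribution of $\{X_n > K\}$ is at most $K_\varphi/(K+1)$ with $K_\varphi := \sup_N \tfrac{1}{N}\sum_n |\varphi(n/N)| < \infty$, while the contribution of $\{X_n \leq K\}$ is bounded by $\sum_{k=0}^K \tfrac{1}{N}\sum_n |\varphi(n/N)| 1\{X_n = k\}$, which tends to $0$ in $L^1(\pi)$ by Lemma \ref{lemme 1 annexe} applied to each $k$. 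Letting $N \to \infty$ and then $K \to \infty$ concludes. The main obstacle—and the reason for appealing to Lemma \ref{lemme 1 annexe}—is controlling $p_n - 1/2$ when $X_n$ is small, since there the correction is of order one; only the transience of the conditioned walk makes this regime negligible.
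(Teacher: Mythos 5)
Your proof is correct, but it takes a genuinely different route from the paper's. The paper's argument is a coupling: it constructs, in parity pairs $(X_{2n+1},X_{2n+2})$, a joint realization of the conditioned walk $X$ and a simple symmetric random walk $S$ driven by common auxiliary randomness, so that the corner indicators $1\{\Delta X_{2n+1}\neq 0\}$ and $1\{\Delta S_{2n+1}\neq 0\}$ disagree only on $\{X_{2n}=0\}$; the law of large numbers for the i.i.d.\ corner indicators of $S$ gives the limit $\frac12\int_0^\infty\varphi$, and Lemma~\ref{lemme 1 annexe} absorbs the coupling error. You instead take a Doob decomposition of $1\{\Delta X_n\neq 0\}$ in its natural filtration, compute the compensator $p_n$ explicitly from the $h$-transform transition kernel, kill the martingale part by orthogonality of bounded martingale differences, and invoke Lemma~\ref{lemme 1 annexe} to show $N^{-1}\sum_n\varphi(n/N)\,(p_n-\tfrac12)\to 0$ in $L^1(\pi)$. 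Both arguments lean on the same transience input; yours trades the construction of an explicit coupling for a short computation of $p_n$ plus an $L^2$ orthogonality estimate, and it makes the origin of the factor $1/2$ somewhat more transparent. One small remark: you apply Lemma~\ref{lemme 1 annexe} to $|\varphi|$, which is not literally Schwartz, but this is harmless since the proof of that lemma only uses boundedness of the test function and finiteness of $\sup_N N^{-1}\sum_n|\varphi(n/N)|$, both of which hold for $|\varphi|$.
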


\begin{proof}
    First, note that given a sequence $(B_n)_{n \in \N}$ of i.i.d.~Bernoulli random variables of parameter $1/2$, by a straightforward computation of the expectation and the variance, we have
    \begin{equation*}
        \f{1}{N} \sum_{n \in \N} B_n \varphi \left( \f{n}{N} \right) \xrightarrow[N \to \infty]{L^2} \f{1}{2} \int_{0}^\infty \varphi (x) \: dx.
    \end{equation*}
    Therefore, if we replaced in the statement $(X_n)_{n \in \N}$ by a simple symmetric random walk $(S_n)_{n \in \N}$ then \eqref{eq lemme 2 annexe} would hold. Indeed, as $\left( 1 \left\lbrace \Delta S_{2n+1} \neq 0   \right\rbrace \right)_{n \in \N}$ and $\left( 1 \left\lbrace \Delta S_{2n} \neq 0   \right\rbrace \right)_{n \in \N}$ are families of i.i.d. Bernoulli random variables of parameter $1/2$, we have
    \begin{equation} \label{LGN}
        \left\lbrace
        \begin{array}{rl}
             &\f{1}{N} \sum_{n \in \N} 1 \left\lbrace \Delta S_{2n+1} \neq 0  \right\rbrace \varphi \left( \f{n}{N} \right) \xrightarrow[N \to \infty]{L^1(\pi)} \f{1}{4} \int_{0}^\infty \varphi (x) \: dx  \\[3mm]
             &\f{1}{N} \sum_{n \in \N} 1 \left\lbrace \Delta S_{2n} \neq 0   \right\rbrace \varphi \left( \f{n}{N} \right) \xrightarrow[N \to \infty]{L^1(\pi)} \f{1}{4} \int_{0}^\infty \varphi (x) \: dx.
        \end{array}
        \right.
    \end{equation}
    So that
    \begin{equation*}
        \f{1}{N} \sum_{n \in \N} 1 \left\lbrace \Delta S_{n} \neq 0  \right\rbrace \varphi \left( \f{n}{N} \right) \xrightarrow[N \to \infty]{L^1(\pi)} \f{1}{2} \int_{0}^\infty \varphi (x) \: dx.
    \end{equation*}
    We want to prove that each of the two convergences of \eqref{LGN} remains in force with $X$ in place of $S$. For simplicity, we present the details only for the first convergence. The strategy is to build a coupling between $X$ and $S$ such that
    \begin{equation} \label{domination stochastique marche conditionnée}
        \f{1}{N} \sum_{n \in \N} \abs{  
        1 \left\lbrace \Delta X_{2n+1} \neq 0   \right\rbrace
        - 1 \left\lbrace \Delta S_{2n+1} \neq 0   \right\rbrace }
        \leq \f{1}{N} \sum_{n \in \N} 1 \left\lbrace X_{2n+1} = 0   \right\rbrace
    \end{equation}
    then Lemma \ref{lemme 1 annexe} shows that the right hand side of \eqref{domination stochastique marche conditionnée} goes to zero in $L^1(\pi)$ as $N \to \infty$, so together with \eqref{LGN} it enables us to conclude. We now establish the coupling. Set $\mathcal{W}:=\left\lbrace \wedge, \vee, - \right\rbrace$ and let us introduce the family $(W_n)_{n \in \N}$ of i.i.d. $\mathcal{W}$-valued random variables such that for all $n \in \N$
\begin{align*}
    \pi(W_n=\wedge)=\pi(W_n=\vee)=\f{1}{4} \quad \textrm{and} \quad
    \pi(W_n=-)=\f{1}{2}.
\end{align*}
    Additionaly, let us take a family $(B_{n,k})_{n \in \N, k\in \N\backslash{0,1}}$ of independent Bernoulli random variables independent of $W$, such that 
    $$B_{n,k} \sim \mathcal{B} \left(\f{p_{k,k+1}p_{k+1,k+2}}{p_{k,k+1}p_{k+1,k+2}+p_{k,k-1}p_{k-1,k-2}} \right)\;,\quad n\ge 0\;,\quad k\ge 2$$
    We can then construct inductively the process $(X_n)_{n \in \N}$ by setting
    $X_0 := 0$ and for $n \geq 0$
    \begin{align*}
        (X_{2n+1},X_{2n+2}):=&1 \left\lbrace W_n= \wedge \right\rbrace (X_{2n}+1,X_{2n}) \\
        &+1 \left\lbrace X_{2n} \neq 0, W_n= \vee \right\rbrace (X_{2n}-1,X_{2n}) \\
        &+1 \left\lbrace X_{2n} \neq 0, W_n= -, B_{n,X_{2n}}=1 \right\rbrace (X_{2n}+1,X_{2n}+2) \\
        &+1 \left\lbrace X_{2n} \neq 0, W_n= -, B_{n,X_{2n}}=0 \right\rbrace (X_{2n}-1,X_{2n}-2) \\
        &+1 \left\lbrace X_{2n} = 0, W_n \neq \wedge \right\rbrace (X_{2n}+1,X_{2n}+2)
    \end{align*}
    then the computation of the probability transitions for $(X_{2n+1},X_{2n+2})_{n \in \N}$ shows that indeed $X$ has the law of a symmetric random walk conditioned to stay non-negative starting from zero. Let us take an independent identically distributed family of random variables $(\tilde{B}_{n})_{n \in \N}$ of parameter $1/2$, and define inductively the Markov process $(S_{n})_{n \in \N}$ by $S_0=0$
    \begin{align*}
        (S_{2n+1},S_{2n+2}):=&1 \left\lbrace W_n= \wedge \right\rbrace (S_{2n}+1,S_{2n}) \\
        &+1 \left\lbrace W_n= \vee \right\rbrace (S_{2n}-1,S_{2n}) \\
        &+1 \left\lbrace W_n= -, \tilde{B}_n=1 \right\rbrace (S_{2n}+1,S_{2n}+2) \\
        &+1 \left\lbrace W_n= -, \tilde{B}_n=0 \right\rbrace (S_{2n}-1,S_{2n}-2) 
    \end{align*}
    then the computation probabilty transitions for $(S_{2n},S_{2n+1})_{n \in \N}$ shows that $S$ has the law of a simple symetric random walk (starting with a $+1$ step). Now, $S$ and $X$ as coupled via $W$ satisfy
    \begin{align*}
        \abs{ 1 \left\lbrace \Delta X_{2n+1} \neq 0 \right\rbrace - 1 \left\lbrace \Delta S_{2n+1} \neq 0 \right\rbrace }
        &\leq 1 \left\lbrace X_{2n}=0  \right\rbrace
    \end{align*}
    which proves \eqref{domination stochastique marche conditionnée}.
\end{proof}

\subsection{Moment estimate on the increments}

In \cite[Lemma 2.2]{lampertinewclass} it is proved that for all $k \in \N$, there exists a constant $a_k >0$ such that for all $n \in \N$
\begin{equation} \label{lamperti}
    \pi \left[ (X_n)^{2k} \right] \leq a_k n^k
\end{equation}
We use this to bound the increments in the following way.

\begin{lemma} \label{lemma moment estimate on the increments SSRW conditioned} For all $k \in \N$ there exists a constant $b_k >0$ such that
    \begin{equation} \label{bound from lamperti}
        \forall n,m \in \N \qquad \pi \left[ (X_n-X_m)^{2k} \right] \leq b_k \abs{n-m}^k
    \end{equation}
\end{lemma}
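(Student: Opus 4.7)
The plan is to exploit the Markov property together with Doob's $h$-transform representation of the conditioned walk, in order to reduce the bound to the classical moment estimate for the unconditioned simple symmetric random walk.

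First, assuming without loss of generality $m \leq n$ and setting $\ell := n - m$, I would observe that under $\pi$, the process $(X_n)_{n \in \N}$ is a Markov chain with the transitions $p_{k,k \pm 1}$ recalled above, which is precisely the Doob $h$-transform of the simple symmetric random walk killed upon reaching $-1$, with respect to the harmonic function $h(x) = x+1$. Conditioning on $X_m = y$ and invoking the Markov property, the next $\ell$ steps form the $h$-transformed walk started from $y$. Denoting by $P^y$ the law of an unconditioned simple symmetric random walk $(S_j)_{j \geq 0}$ starting at $y$, and by $E^y$ the corresponding expectation, the $h$-transform identity yields
\begin{equation*}
    \pi\left[ (X_n - X_m)^{2k} \,\big|\, X_m = y \right]
    = \f{1}{y+1}\, E^y\!\left[ (S_\ell - y)^{2k}(S_\ell + 1)\, 1\!\left\lbrace S_1, \dots, S_\ell \geq 0 \right\rbrace \right].
\end{equation*}

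Second, I would drop the nonnegativity indicator and use that under $P^y$, $S_\ell - y$ has the same distribution as $S_\ell$ under $P^0$ by translation invariance of the simple random walk, to obtain
\begin{equation*}
    \pi\left[ (X_n - X_m)^{2k} \,\big|\, X_m = y \right]
    \leq \f{1}{y+1}\, E^0\!\left[ S_\ell^{2k}(S_\ell + y + 1) \right]
    = \f{E^0[S_\ell^{2k+1}]}{y+1} + E^0[S_\ell^{2k}].
\end{equation*}
The key algebraic cancellation is that $E^0[S_\ell^{2k+1}] = 0$ by the symmetry of the simple random walk, so the bound collapses to $E^0[S_\ell^{2k}]$, a quantity independent of $y$. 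A classical moment estimate for the simple random walk (for instance via Khintchine's inequality, or by direct combinatorial computation) provides a constant $b_k > 0$ such that $E^0[S_\ell^{2k}] \leq b_k \ell^k$. Integrating over the law of $X_m$ under $\pi$ then gives $\pi\left[(X_n - X_m)^{2k}\right] \leq b_k |n - m|^k$.

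Regarding the main obstacle: the naive approach of writing $(X_n - X_m)^{2k} \leq 2^{2k-1}(X_n^{2k} + X_m^{2k})$ and directly applying \eqref{lamperti} would only yield a bound of order $n^k$, which is insufficient. The real difficulty is that the increment $X_n - X_m$ of the conditioned walk does not have a translation-invariant distribution, so one cannot simply transfer Lamperti's estimate to the increment itself. The $h$-transform representation is precisely what makes the dependence on the starting height $y = X_m$ disappear, through the cancellation of the odd moment by symmetry --- this is the key step that makes the reduction to an unconditioned-walk moment estimate work.
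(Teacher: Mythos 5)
Your approach has a genuine gap at the step \emph{``I would drop the nonnegativity indicator.''} The integrand $(S_\ell - y)^{2k}(S_\ell + 1)$ is not nonnegative: on the event $A^c = \{\exists\, j \le \ell : S_j < 0\}$, the endpoint $S_\ell$ may satisfy $S_\ell \le -2$ and hence $S_\ell + 1 < 0$. In fact one can show by a symmetry argument (conditioning on the first hit of $-1$, then using the strong Markov property and the pointwise inequality $s\bigl[(s-c)^{2k} - (s+c)^{2k}\bigr] \le 0$ for $c>0$) that
\begin{equation*}
E^y\!\left[(S_\ell - y)^{2k}(S_\ell + 1)\, 1_{A^c}\right] \le 0,
\end{equation*}
so that dropping the indicator \emph{decreases} the expectation rather than increasing it. Your chain of inequalities therefore yields
\begin{equation*}
\pi\left[ (X_n - X_m)^{2k} \,\big|\, X_m = y \right] \;\ge\; E^0[S_\ell^{2k}],
\end{equation*}
which is a lower bound, not the upper bound you need. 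A concrete counterexample to your claimed inequality: with $\ell = 2$, $k = 1$, $y = 0$, the conditioned chain forces $X_1 = 1$ and then $X_2 = 2$ with probability $3/4$, $X_2 = 0$ with probability $1/4$, so $\pi[X_2^2 \mid X_0 = 0] = 3$, whereas $E^0[S_2^2] = 2 < 3$. The upward drift of the conditioned walk \emph{inflates} the increment moments relative to the free walk, which is the opposite of what your bound requires.

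The paper's proof goes in a direction that respects the correct monotonicity: it first uses the upward drift of $(X_n)$ (i.e.~$p_{k,k+1}\ge p_{k,k-1}$) to stochastically dominate the negative part of the increment by its positive part, and then uses the fact that the upward bias is strongest at the origin (i.e.~$p_{k,k+1}\le p_{j,j+1}$ for $k\ge j$) to stochastically dominate $(X_n-X_m)_+$ by $(X_{n-m})_+$ started from zero, after which Lamperti's estimate \eqref{lamperti} applies directly. Your $h$-transform identity and the vanishing of the odd moment are correct and elegant observations, but they cannot by themselves close the argument: the contribution of the event $A^c$ that you discard is precisely what compensates the $h$-weight $S_\ell+1$, and discarding it is not harmless.
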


\begin{proof}
    Let us write $X \preccurlyeq Y$ to say that the random variable $X$ is stochastically dominated by $Y$. Without loss of generality, assume that $n \geq m$. First, from the inequality on the probability transitions $p(k,k+1) \geq p(k,k-1)$ for all $k \in \N$, we deduce that
    \begin{equation} \label{stochastic domination 1}
        (X_n-X_m)_- \preccurlyeq (X_n-X_m)_+.
    \end{equation}
    where $(\cdot)_+$ and $(\cdot)_-$ denote respectively the positive and negative parts.  Second, from the inequality on the probability transitions $p(k,k+1) \leq p(j,j+1)$ whenever $k \geq j$, we deduce that
    \begin{equation} \label{stochastic domination 2}
        (X_n-X_m)_+ \preccurlyeq (X_{n-m})_+.
    \end{equation}
    Consequently, using \eqref{stochastic domination 1} and \eqref{stochastic domination 2}
    \begin{align*}
        \pi \left[ (X_n-X_m)^{2k} \right] &= \pi \left[ ((X_n-X_m)_+)^{2k} \right] + \pi \left[ ((X_n-X_m)_-)^{2k} \right] \\
        &\leq 2 \pi \left[ ((X_n-X_m)_+)^{2k} \right] \\
        &\leq 2 \pi  \left[ ((X_{n-m})_+)^{2k} \right] \\
        &\leq 2 a_k (n-m)^k
    \end{align*}
    where we used \eqref{lamperti} in the last line.
\end{proof}

\section{Tightness of \texorpdfstring{$(h^\epsilon)_{\epsilon \in (0,1]}$}{h}}

\noindent In this section we fix $\rho>0$ and focus on the discrete interfaces, that is the collection $(h^\epsilon)_{\epsilon \in (0,1]}$ of $D([0,T], \mathcal{C}_\rho)$-valued random variables, where the space
\begin{equation*}
    \mathcal{C}_\rho:= \left\lbrace f \in C([0,\infty)) \:: \:  f(0)=0, \; \displaystyle{\sup_{x \in [0,\infty)}} \abs{f(x)} e^{-\rho x} =:\norme{f}_{\mathcal{C}_\rho} < \infty \right\rbrace,
\end{equation*}
\noindent is endowed with the topology induced by $\norme{\cdot}_{\mathcal{C}_\rho}$. The goal in this section is to prove the following result
\begin{theorem} \label{tightness of h}
    The collection $(h^\epsilon)_{\epsilon \in (0,1]}$ is tight in $D([0,\infty), \mathcal{C}_\rho)$ and any limit point belongs to $C([0,\infty), \mathcal{C}_\rho)$.
\end{theorem}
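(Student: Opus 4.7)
The plan is to establish uniform-in-$\epsilon$ moment bounds on the space-time increments of $h^\epsilon$ together with a uniform control of its decay at infinity in the weighted norm $\norme{\cdot}_{\mathcal{C}_\rho}$. The standard two-parameter Kolmogorov-Chentsov criterion, applied to the field $(t,x)\mapsto e^{-\rho x}h^\epsilon_t(x)$ on $[0,T]\times[0,\infty)$, will then furnish tightness in $C([0,T],\mathcal{C}_\rho)$ for every $T>0$, and hence in $D([0,\infty),\mathcal{C}_\rho)$ with continuous limit points by diagonal extraction in $T$.

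For the spatial regularity and the $\mathcal{C}_\rho$-decay, I would exploit stationarity: since $\zeta\sim\pi$ and $\pi$ is invariant, $h^\epsilon_t\sim\pi^\epsilon$ for every $t\ge 0$, reducing these bounds to static computations under $\pi$. Lemma \ref{lemma moment estimate on the increments SSRW conditioned} yields
\[
\E\bigl[|h^\epsilon_t(x)-h^\epsilon_t(y)|^{2k}\bigr]=\epsilon^k\,\E_\pi\bigl[|X_{\epsilon^{-1}x}-X_{\epsilon^{-1}y}|^{2k}\bigr]\le b_k|x-y|^k,
\]
uniformly in $\epsilon$ and $t$. Combining this spatial Hölder bound with Lamperti's moment estimate \eqref{lamperti}, namely $\E[|h^\epsilon_t(x)|^{2k}]\le a_k x^k$, through a union bound on dyadic intervals produces $\sup_{t\le T}\E[\norme{h^\epsilon_t}_{\mathcal{C}_\rho}^{2k}]\le C_{k,\rho,T}$ and a quantitative control of the decay at infinity, both uniform in $\epsilon$.

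The crux is the temporal increment bound. For $n=\epsilon^{-1}x\in\N^*$, the martingale problem \eqref{martingale pb} for the cylindrical function $p_n$ provides a Dynkin martingale $M(n)$ with predictable bracket $\int_0^u g(h_r)\,dr$, where $g(h)=(\Delta h(n))^2\mathbf{1}\{h(n)+\Delta h(n)\ge 0\}\le 4$. Controlling $h_t(n)-h_s(n)$ directly through the drift $\int_s^t Lp_n(h_r)\,dr$ is hopeless because this drift carries the reflection indicator, whose behavior near the wall is precisely what one wants to avoid handling a priori. Instead, reversibility of $\pi$ together with stationarity of the initial condition enable the Lyons-Zheng decomposition \cite{lyonszheng}:
\[
h_t(n)-h_s(n)=\tfrac{1}{2}\bigl(M_t(n)-M_s(n)\bigr)-\tfrac{1}{2}\bigl(\hat M_{T-s}(n)-\hat M_{T-t}(n)\bigr),
\]
where $\hat M(n)$ is the Dynkin martingale of the time-reversed process. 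A first application of BDG bounds the $2k$-th moment by a constant times $\E[(\int_s^t g(h_r)\,dr)^k]$; the trivial $g\le 4$ estimate gives only $(t-s)^k$, which after the diffusive rescaling becomes $\epsilon^{-k}(t-s)^k$ and is not uniform in $\epsilon$. The double BDG technique of \cite{giacomin,etheridge2015scaling} upgrades this: write $\int_s^t g(h_r)\,dr=(t-s)\bar g+(\psi(h_t)-\psi(h_s))-(N_t-N_s)$ via Dynkin's formula with $\psi$ a resolvent approximate solution of the Poisson equation $L\psi=g-\bar g$, then reapply BDG to the auxiliary martingale $N$, whose bracket is again time-proportional. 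The net effect is $\E[|h_t(n)-h_s(n)|^{2k}]\le C_k(t-s)^{k/2}$, which after the diffusive rescaling becomes
\[
\E\bigl[|h^\epsilon_t(x)-h^\epsilon_s(x)|^{2k}\bigr]\le C_k|t-s|^{k/2}
\]
uniformly in $\epsilon$ and $x\in\epsilon\N$; this is the right diffusive exponent for Kolmogorov.

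The main obstacle will be this double BDG step in infinite volume: solving or sufficiently approximating the Poisson equation $L\psi=g-\bar g$ uniformly in $n$ requires quantitative mixing for the reversible dynamics under $\pi$, in the absence of a spectral gap and with the nontrivial correlations of $\pi$. A secondary, more bookkeeping-type difficulty is ensuring that all constants in the dyadic union-bound for the $\mathcal{C}_\rho$-norm remain uniform in $\epsilon$, so that the resulting tightness is genuinely robust against the unbounded spatial domain.
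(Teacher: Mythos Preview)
Your central temporal estimate $\E[|h_t(n)-h_s(n)|^{2k}]\le C_k(t-s)^{k/2}$ is not attainable by the argument you sketch, and this is a genuine gap. The predictable bracket of the Lyons--Zheng martingale at a single site $n$ is $\int_s^t g(h_r)\,dr$ with $g(h)=(\Delta h(n))^2\,1\{h(n)+\Delta h(n)\ge 0\}$, and its $\pi$-mean $\bar g$ is bounded below by a positive constant (under $\pi$ a corner sits at $n$ with probability close to $1/2$ for $n$ large, and the wall indicator is almost always $1$ by transience). Your Poisson-equation decomposition controls only the \emph{fluctuation} of $\int_s^t g\,dr$ around $(t-s)\bar g$; the leading term $(t-s)\bar g$ survives, so $\E\bigl[(\int_s^t g\,dr)^k\bigr]\sim(t-s)^k$ and BDG still gives $\E[|M_t-M_s|^{2k}]\sim(t-s)^k$, not $(t-s)^{k/2}$. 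After rescaling this is $\epsilon^{-k}(t-s)^k$, exactly the non-uniform bound you started from. The Kipnis--Varadhan mechanism helps only when the integrand has vanishing $\pi$-mean, which is not the case here; the obstacle you flag (solving the Poisson equation in infinite volume) is therefore not the real one.

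The paper's route is to abandon pointwise-in-$x$ temporal estimates and work with spatially integrated observables. Testing $g^\epsilon_t:=e^{-\rho\cdot}h^\epsilon_t$ against $e^{-i\zeta\cdot}$, the rescaled Lyons--Zheng martingale for $\hat g^\epsilon_t(\zeta)$ has predictable bracket $\lesssim(t-s)$ \emph{uniformly in $\epsilon$}: the exponential weight makes the sum over sites contribute a factor $\epsilon^{-1}$, each $(\Delta^\epsilon h^\epsilon)^2\le 4\epsilon$, the clock runs at rate $\epsilon^{-2}$, and the kernel normalisation $c_{\zeta,\epsilon}^2\lesssim\epsilon^2$, so the powers of $\epsilon$ cancel exactly. (The paper's ``double BDG'' is the jump-martingale device $[M]=\langle\!\langle M\rangle\!\rangle+D$ followed by a second BDG on the martingale $D$, not a Poisson equation.) This yields temporal H\"older regularity of the time-linearised field $\bar g^\epsilon$ in the negative Sobolev space $\mathcal H^{-s_0}$. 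The lost spatial regularity is then recovered by interpolating against a \emph{static} positive-Sobolev bound $\sup_\epsilon\E[\|\bar g^\epsilon_t\|_{\mathcal W^{s_1,r}}^p]<\infty$, which is a pure $\pi^\epsilon$ computation based on Lemma~\ref{lemma moment estimate on the increments SSRW conditioned} and \eqref{lamperti}. The interpolation produces $\E[\|\bar g^\epsilon_t-\bar g^\epsilon_s\|_{C^b}^{2p}]\le c|t-s|^{\kappa p}$ for some $\kappa>0$, and Kolmogorov in time alone (plus a separate control of the time-interpolation error) gives tightness. The exchange of spatial for temporal regularity via Sobolev interpolation is precisely the mechanism that replaces the unavailable pointwise estimate.
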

\noindent To do so, let us write $\bar{h}$ for the piecewise linear interpolation in time of $h$, that is 
$$\bar{h}_t:= (1- t + \lfloor t \rfloor) h_{\lfloor t \rfloor} + (t - \lfloor t \rfloor) h_{\lceil t \rceil} \qquad \forall t \geq 0. $$
Let us also write $\bar{h}^\epsilon$ for the rescaling of $\bar{h}$, that is
$$\bar{h}^\epsilon_t(x):=\bar{h}_{\epsilon^{-2}t}(\epsilon^{-1}x) \qquad \forall x \in \epsilon \N, \quad \forall t \geq 0.$$
Finally let us write $\bar{g}^\epsilon$ (resp. $g^{\epsilon}$) for the multiplication of $\bar{h}^\epsilon$ (resp. $h^\epsilon$) by an exponential factor, that is
\begin{align*}
    &\bar{g}^\epsilon_t(x):= e^{-\rho x} \bar{h}^\epsilon_t(x) \qquad \forall x \in \epsilon \N, \quad \forall t \geq 0 \\
    &g^\epsilon_t(x):= e^{-\rho x} h^\epsilon_t(x) \qquad \forall x \in \epsilon \N, \quad \forall t \geq 0.
\end{align*} 
It suffices to prove
\begin{proposition}
    The collection $(g^\epsilon)_{\epsilon \in (0,1]}$ is tight in $D([0,\infty), C([0,\infty)))$ and any limit point belongs to $C([0,\infty), C([0,\infty)))$.
\end{proposition}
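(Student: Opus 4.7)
The plan is the standard Aldous--Kolmogorov strategy for tightness in path spaces, with the exponential weight $e^{-\rho x}$ serving to compactify the infinite spatial direction. First I would reduce the problem: tightness of $(g^\epsilon)$ in $D([0,\infty), C([0,\infty)))$ (local uniform topology on $C([0,\infty))$) follows from tightness in $D([0,T], C([0,A]))$ for every $T,A>0$ together with a uniform decay estimate, namely that $\sup_{t\le T,\; x\ge A}|g^\epsilon_t(x)|$ tends to $0$ in probability as $A \to \infty$, uniformly in $\epsilon$. The decay step rests on the static bound $\pi[X_n^{2k}] \le a_k n^k$: after diffusive rescaling it gives $\mathbb{E}[|h^\epsilon_t(x)|^{2k}] \le a_k x^k$ for every $t \ge 0$ by stationarity, hence $\mathbb{E}[|g^\epsilon_t(x)|^{2k}] \le a_k x^k e^{-2k\rho x}$, which decays at $+\infty$; the supremum over $t\in[0,T]$ is then handled via the temporal modulus obtained below.

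For tightness on a fixed box $[0,T]\times[0,A]$ I would prove a joint Kolmogorov-type estimate
\begin{equation*}
    \mathbb{E}\bigl[|h^\epsilon_t(x) - h^\epsilon_s(y)|^{2p}\bigr] \le C_{T,A,p}\bigl(|t-s|^{\alpha p} + |x-y|^{p}\bigr),
    \qquad s,t\in[0,T],\; x,y\in[0,A],
\end{equation*}
uniform in $\epsilon\in(0,1]$, for some $\alpha>0$ and $p$ large enough that both exponents exceed $2$; this transfers directly to $g^\epsilon$ since $x\mapsto e^{-\rho x}$ is Lipschitz on $[0,A]$. The spatial piece at fixed time is immediate from stationarity: $h_{\epsilon^{-2}t} \sim \pi$, hence $h^\epsilon_t(x)-h^\epsilon_t(y) = \sqrt{\epsilon}(X_{\epsilon^{-1}x}-X_{\epsilon^{-1}y})$, and Lemma~\ref{lemma moment estimate on the increments SSRW conditioned} produces exactly the bound $b_p |x-y|^p$.

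The crux is the temporal estimate, for which I would invoke Lyons--Zheng's decomposition as foreshadowed in the introduction. The obstruction is that the semimartingale representation \eqref{third semimartingale equation} contains the discrete reflection drift, not amenable to direct BDG control. However, the reversibility of $h$ under $\pi$ combined with the stationary initial condition permits one to write, for each fixed $x$ and each $T>0$,
\begin{equation*}
    h^\epsilon_t(x) - h^\epsilon_s(x) = \tfrac{1}{2}\bigl(M^\epsilon_t(x) - M^\epsilon_s(x)\bigr) - \tfrac{1}{2}\bigl(\widehat M^\epsilon_{T-s}(x) - \widehat M^\epsilon_{T-t}(x)\bigr),
    \qquad 0\le s\le t\le T,
\end{equation*}
where $M^\epsilon(x)$ and $\widehat M^\epsilon(x)$ are, respectively, the forward martingale part of \eqref{third semimartingale equation} and the martingale part in the time-reversed filtration; the reflection drift drops out entirely. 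I would then apply the double BDG technique of \cite{giacomin,etheridge2015scaling}: the inner BDG controls the $2p$-th moment by the $p$-th moment of the quadratic variation of $M^\epsilon(x)$, whose predictable compensator reads, by \eqref{rescaled semimartingale eq}, as an integral of indicators of corner events at neighboring sites, and the outer estimate then turns this compensator -- via stationarity and Lemma~\ref{lemme 2 annexe}, which controls the average density of corners -- into the desired factor $|t-s|^{\alpha p}$.

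With the moment estimate in hand, Kolmogorov--Chentsov produces tightness of $(g^\epsilon)$ in $C([0,T]\times[0,A])$, hence in $D([0,T],C([0,A]))$, and the decay step lifts this to $D([0,\infty), C([0,\infty)))$. For the continuity of limit points, each jump of $h^\epsilon$ consists of a single corner flip of amplitude $\pm 2\sqrt{\epsilon}$ at one site, so the local uniform norm of any jump of $g^\epsilon$ is bounded by $2\sqrt{\epsilon}\to 0$, forcing any limit point to belong to $C([0,\infty), C([0,\infty)))$. The main difficulty will unquestionably be the Lyons--Zheng step: identifying the time-reversed martingale $\widehat M^\epsilon$, verifying that its quadratic variation remains under control on the unbounded lattice, and dealing with the pinning at $x=0$ (which breaks spatial translation invariance) constitute the technical heart of the argument.
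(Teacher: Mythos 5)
Your plan correctly identifies the three ingredients (spatial moments from stationarity, decay at infinity via the weight $e^{-\rho x}$, Lyons--Zheng for the temporal increment) and correctly diagnoses that the reflection drift obstructs a direct BDG estimate. However, the step at which you apply Lyons--Zheng \emph{pointwise}, to $f_x(h) = h(x)$, does not close. The forward martingale in that decomposition is
\begin{equation*}
M_{s,t}(x) = \int_s^t \Delta^\epsilon h^\epsilon_r(x)\, 1\{h^\epsilon_r(x)+\Delta^\epsilon h^\epsilon_r(x)\ge 0\}\, dM^\epsilon_r(x),
\end{equation*}
whose predictable bracket is $\int_s^t (\Delta^\epsilon h^\epsilon_r(x))^2 1\{\cdots\}\,\epsilon^{-2}dr$. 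Since $(\Delta^\epsilon h^\epsilon_r(x))^2\le 4\epsilon$, this is bounded by $4(t-s)/\epsilon$, and in fact is of that order in expectation since the corner density is $\approx 1/2$ under $\pi$. Thus BDG applied to this pointwise martingale (and to its backward counterpart) yields $\E[|h^\epsilon_t(x)-h^\epsilon_s(x)|^{2p}]\lesssim (t-s)^p/\epsilon^p$, which blows up as $\epsilon\to 0$ and cannot produce the uniform temporal modulus you need. Lyons--Zheng eliminates the drift as a drift, but the identity $h^\epsilon_t(x)-h^\epsilon_s(x)=\tfrac12(M-\widehat M)$ hides a large cancellation between $M$ and $\widehat M$ that BDG applied to each piece separately cannot detect; the Laplacian smoothing that makes the pointwise increment of order $(t-s)^{1/4}$ is invisible at the level of a single-site quadratic variation. (Lemma~\ref{lemme 2 annexe} gives the \emph{average} corner density over many sites and does not reduce the one-site bracket.)

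The paper's fix is precisely to avoid the pointwise functional. It applies Lyons--Zheng to the spatially averaged observables $f_\zeta(h)=\langle h, e_\zeta\rangle_{L^2(e^{-\rho x}dx)}$, i.e.~the Fourier modes of $g^\epsilon_t=e^{-\rho x}h^\epsilon_t$ (Lemma~\ref{Holder moment estimate sobolev}). There, the kernel coefficient $c_{\zeta,\epsilon}^2\lesssim\epsilon^2$ combined with the geometric sum $\sum_{x\in\epsilon\N^*}e^{-2\rho x}\lesssim \epsilon^{-1}$ turns the bracket into $O(t-s)$ uniformly in $\epsilon$ (see \eqref{bound bracket}), and a double BDG yields $\|\hat g^\epsilon_t(\zeta)-\hat g^\epsilon_s(\zeta)\|_{L^m}\lesssim (t-s)^{1/2}+\epsilon^{3/4}$. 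This gives a temporal modulus only in a \emph{negative} Sobolev norm $\mathcal{H}^{-s_0}$ (Proposition~\ref{estimation negative sobolev}); the passage to a sup-norm modulus is achieved not by a joint Kolmogorov estimate on $h^\epsilon$ directly, but by interpolating against the (merely bounded, time-uniform) positive-regularity estimate in $\mathcal{W}^{s_1,r}$ from Lemma~\ref{estimation positive sobolev} and then embedding into $C^b$ (Lemma~\ref{lemme Cb}). You would also need the piecewise-linear-in-time interpolant $\bar g^\epsilon$ and the error estimate of Lemma~\ref{estimation of the linearization error}, since $g^\epsilon$ is piecewise constant in time. Your decay-at-infinity and continuity-of-limits observations are fine in principle, but the temporal estimate must be reorganized along the Fourier/negative-Sobolev route for the argument to go through.
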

\noindent In what follows, we will extend the functions $h_t^\epsilon, \bar{h}_t^\epsilon, g_t^\epsilon, \bar{g}_t^\epsilon$ to functions on $\R$, setting their value to zero for $x \in (-\infty, 0)$.

\subsection{Moment estimate for the \texorpdfstring{$\mathcal{W}^{s_1, r}$}{W}-norm of the time increments} \label{subsection w s1 r}

For any $s_1 > 0$ and $r \geq 1$, let us introduce the Sobolev-Slobodeckij space
\begin{equation}
    \mathcal{W}^{s_1,r}:= \left\lbrace f \in L^r(\R) \:: \: \norme{f}_{L^r(\R)}^{r} + \Int{\R^2}{} \f{\abs{f(x)-f(y)}^{r}}{\abs{x-y}^{s_1 r+1}} \: dx \, dy =: \norme{f}^{r}_{\mathcal{W}^{s_1,r}} < \infty \right\rbrace.
\end{equation}
The aim of this paragraph is to prove the following statement
\begin{lemma} \label{estimation positive sobolev} For every $s_1 \in (0,1/2)$, every $r, p >1$ and $s,t \in [0,T]$ we have
    \begin{equation} \label{equation estimation positive sobolev}
        \displaystyle{\sup_{\epsilon \in (0,1]} \:  \mathbb{E} \left[ \norme{\bar{g}^\epsilon_t -\bar{g}^\epsilon_s}_{\mathcal{W}^{s_1,r} } ^p \right]^{\f{1}{p}} } < \infty.
    \end{equation}
\end{lemma}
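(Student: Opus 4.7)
The plan is to use the triangle inequality in $\mathcal{W}^{s_1,r}$ together with the stationarity of the dynamics under $\pi$ in order to reduce the statement to a single-time uniform moment bound on $\bar{g}^\epsilon_0$, and then to deduce that bound from the pointwise moment estimates on $(X_n)_{n \in \N}$ under $\pi$ provided by Lemma~\ref{lemma moment estimate on the increments SSRW conditioned}.

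First I would invoke Minkowski's inequality in $L^p$ together with the triangle inequality in $\mathcal{W}^{s_1,r}$ to write
\begin{equation*}
    \mathbb{E}\big[ \norme{\bar{g}^\epsilon_t - \bar{g}^\epsilon_s}_{\mathcal{W}^{s_1,r}}^p \big]^{1/p} \leq \mathbb{E}\big[ \norme{\bar{g}^\epsilon_t}_{\mathcal{W}^{s_1,r}}^p \big]^{1/p} + \mathbb{E}\big[ \norme{\bar{g}^\epsilon_s}_{\mathcal{W}^{s_1,r}}^p \big]^{1/p}.
\end{equation*}
Since $\bar{g}^\epsilon_t$ is by construction a convex combination of the two grid-time values $g^\epsilon_{\epsilon^2 \lfloor \epsilon^{-2} t \rfloor}$ and $g^\epsilon_{\epsilon^2 \lceil \epsilon^{-2} t \rceil}$, the convexity of $\norme{\cdot}_{\mathcal{W}^{s_1,r}}$ together with the stationarity of $(h^\epsilon_u)_{u \geq 0}$ under $\pi^\epsilon$ reduces the lemma to proving $\sup_{\epsilon \in (0,1]} \mathbb{E}[\norme{g^\epsilon_0}_{\mathcal{W}^{s_1,r}}^p] < \infty$. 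Moreover, by Hölder's inequality in $L^p(\pi)$ we may assume $p \geq r$ without loss of generality.

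Next, I would establish pointwise moment estimates on $g^\epsilon_0$. Applying Lemma~\ref{lemma moment estimate on the increments SSRW conditioned} and extending to arbitrary real exponents via Hölder yields $\mathbb{E}[\abs{X_n}^q]^{1/q} \leq C_q\, n^{1/2}$ and $\mathbb{E}[\abs{X_n - X_m}^q]^{1/q} \leq C_q \abs{n-m}^{1/2}$ for every $q \geq 1$. Translating to $h^\epsilon_0$ via $h^\epsilon_0(\epsilon k) = \sqrt{\epsilon}\, X_k$ and the linear interpolation in space, I obtain, uniformly in $\epsilon \in (0,1]$,
\begin{equation*}
    \mathbb{E}\big[ \abs{h^\epsilon_0(x)}^q \big]^{1/q} \leq C_q\, (x + \epsilon)^{1/2}, \qquad \mathbb{E}\big[\abs{h^\epsilon_0(x) - h^\epsilon_0(y)}^q\big]^{1/q} \leq C_q \abs{x-y}^{1/2},
\end{equation*}
for all $x, y \geq 0$. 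Decomposing
\begin{equation*}
    g^\epsilon_0(x) - g^\epsilon_0(y) = e^{-\rho x}\bigl( h^\epsilon_0(x) - h^\epsilon_0(y) \bigr) + \bigl( e^{-\rho x} - e^{-\rho y} \bigr) h^\epsilon_0(y),
\end{equation*}
and using $\abs{e^{-\rho x} - e^{-\rho y}} \leq \rho \abs{x-y}\, e^{-\rho \min(x,y)}$, this yields
\begin{equation*}
    \mathbb{E}\big[\abs{g^\epsilon_0(x) - g^\epsilon_0(y)}^p\big]^{1/p} \leq C\, e^{-\rho \min(x,y)} \bigl( \abs{x-y}^{1/2} + \abs{x-y}\, (y+\epsilon)^{1/2} \bigr),
\end{equation*}
together with $\mathbb{E}[\abs{g^\epsilon_0(x)}^p]^{1/p} \leq C\, e^{-\rho x} (x + \epsilon)^{1/2}$.

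Finally I would integrate. Since $p \geq r$, Minkowski's integral inequality gives $\mathbb{E}[\norme{g^\epsilon_0}_{L^r(\R)}^p]^{1/p} \leq \bigl( \int_0^\infty \mathbb{E}[\abs{g^\epsilon_0(x)}^p]^{r/p} dx \bigr)^{1/r}$, which is finite by the exponential decay of the integrand. The Gagliardo seminorm reduces analogously to bounding $\int_{\R^2} \mathbb{E}[\abs{g^\epsilon_0(x) - g^\epsilon_0(y)}^p]^{r/p} \abs{x-y}^{-(s_1 r + 1)} dx\, dy$, which I would split according to $\abs{x-y} \leq 1$ versus $\abs{x-y} > 1$. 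The near-diagonal piece is controlled by an integrand $\lesssim e^{-\rho r \min(x,y)} \abs{x-y}^{r/2 - s_1 r - 1}$, integrable near the diagonal precisely because $s_1 < 1/2$ and integrable at spatial infinity thanks to the exponential weight. On the far piece, I use the crude $\abs{g^\epsilon_0(x) - g^\epsilon_0(y)} \leq \abs{g^\epsilon_0(x)} + \abs{g^\epsilon_0(y)}$ combined with $\abs{x-y}^{-s_1 r - 1} \leq 1$, and conclude by the $L^r$ bound. The contribution from $x < 0$, where $g^\epsilon_0 = 0$, is handled identically using $\abs{x-y} \geq y$. The main obstacle is to accommodate simultaneously the near-diagonal constraint $s_1 < 1/2$ and the integrability at spatial infinity: the latter is delicate because the typical value of $X_n$ under $\pi$ grows like $\sqrt{n}$, so only the exponential weight $e^{-\rho x}$ prevents the estimates from blowing up.
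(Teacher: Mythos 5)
Your proposal follows essentially the same route as the paper: reduce to a single-time bound on $g^\epsilon_0$ via triangle inequality, convexity of the interpolation, and stationarity; lift the Lamperti/increment moment bounds for $(X_n)$ to the piecewise-affine $h^\epsilon_0$; decompose the increment $g^\epsilon_0(x)-g^\epsilon_0(y)$ by peeling off the exponential weight; and split the Gagliardo double integral at $|x-y|=1$, closing the near-diagonal piece using $s_1<1/2$ and the far piece plus the $L^r$ part using the exponential decay. The only cosmetic difference is that you invoke Minkowski's integral inequality to bring the $r$th-power spatial integral outside the $L^p(\Omega)$-norm, whereas the paper applies Jensen's inequality with the convex map $x\mapsto x^{p/r}$ to the normalized measure on the relevant spatial domain; both are standard ways of handling the exponent mismatch $p>r$ and lead to the same Fubini-type computation.
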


\begin{proof}
	Let us first observe that, using Minkowski inequality on the $L^{2k}$-norm and the concavity of $x\mapsto x^{1/2}$, the bounds \eqref{lamperti} and \eqref{bound from lamperti} can be lifted at the level of the piecewise affine process as follows: for all $t\ge 0$, $x,y \in \R_+$ and all $k\in \N$
	\begin{equation}\label{Eq:akbk}
		\E[|h_t(x)|^{2k}] \le a_k x^k\;,\quad \E[|h_t(x)-h_t(y)|^{2k}] \le b_k |x-y|^k\;.
	\end{equation}
    We now prove the bound of the statement. Without loss of generality, we can assume that $p>r$. By the triangle inequality
    \begin{equation*}
        \mathbb{E} \left[ \norme{\bar{g}^\epsilon_t -\bar{g}^\epsilon_s}_{\mathcal{W}^{s_1,r} } ^p \right]^{1/p}
        \leq \mathbb{E} \left[ \norme{\bar{g}^\epsilon_t}_{\mathcal{W}^{s_1,r} } ^p \right]^{1/p}
        + \mathbb{E} \left[ \norme{\bar{g}^\epsilon_s}_{\mathcal{W}^{s_1,r} } ^p \right]^{1/p}.
    \end{equation*}
    Then, as $\bar{g}^\epsilon$ corresponds to the linear time interpolation of $g^\epsilon$,
    \begin{align*}
         \mathbb{E} \left[ \norme{\bar{g}^\epsilon_t}_{\mathcal{W}^{s_1,r} } ^p \right]^{1/p}
         &\leq \mathbb{E} \left[ \norme{g^\epsilon_{\lfloor t \epsilon^{-2} \rfloor \epsilon^2}} _{\mathcal{W}^{s_1,r} } ^p \right]^{1/p} +
        \E \left[ \norme{g^\epsilon_{\lceil t \epsilon^{-2} \rceil \epsilon^2}} _{\mathcal{W}^{s_1,r} } ^p \right]^{1/p} \\
        &\leq 2 \E \left[ \norme{g^\epsilon_0} _{\mathcal{W}^{s_1,r} } ^p \right]^{1/p}
    \end{align*}
    using in the last line the fact that the process starts from stationarity. We now estimate the last term. 
    First, we have by H\"older's inequality and \eqref{Eq:akbk}
    \begin{align*}
        \E \left[ \left( \Int{[0,\infty)}{} \abs{g_0^\epsilon (x)}^{r} \: dx\right)^p \right] &\leq C_1^{p-1} \E \left[ \Int{[0,\infty)}{} {\abs{h_0^\epsilon(x)}}^{rp} e^{-rp\rho x} \: dx \right] \\
        &\leq C_1^{p-1} a_{rp/2} \Int{[0,\infty)}{} x^{rp/2} e^{-rp\rho x} \: dx \\
        &< \infty.
    \end{align*}
    with $C_1:=\Int{[0,\infty)}{} \left( e^{-r \rho x/2} \right)^{\f{p}{p-1}} \: dx$.
    Second, we have, for all $x, y \in [0,\infty)$ such that $\abs{x-y} \leq 1$
    \begin{align*}
        \abs{g_0^\epsilon(x)-g_0^\epsilon(y)} &=\abs{h_0^\epsilon(x)e^{-\rho x}-h_0^\epsilon(y)e^{-\rho y}} \\
        &\leq \abs{h_0^\epsilon(x)} \abs{e^{-\rho x}-e^{-\rho y}} + e^{-\rho y} \abs{h_0^\epsilon(x)-h_0^\epsilon(y)} \\
        &\leq \abs{h_0^\epsilon(x)} e^{-\rho (x-1)} \rho \abs{x-y} + e^{-\rho y} \abs{h_0^\epsilon(x)-h_0^\epsilon(y)} \\        
    \end{align*}
    Thus, by the triangle inequality
        \begin{align} \label{triangular inequality}
        \mathbb{E}& \left[ \left( \Int{x}{} \Int{\abs{y-x}\leq 1}{} \abs{g_0^\epsilon(x)-g_0^\epsilon(y)}^r \: \f{dx \, dy}{\abs{x-y}^{1+s_1 r}} \right)^{p/r}  \right]^{1/p} \nonumber \\
        &\leq \mathbb{E} \left[ \left( \Int{x}{} \rho^r \Int{\abs{y-x}\leq 1}{} \abs{h_0^\epsilon(x)}^{r} e^{- r \rho (x-1)} \abs{x-y}^{r}  \: \f{dx \, dy}{\abs{x-y}^{1+s_1 r}} \right)^{p/r}  \right]^{1/p} \nonumber \\
        &+ \mathbb{E} \left[ \left( \Int{x}{} \Int{\abs{y-x}\leq 1}{} e^{-r\rho y} \abs{h_0^\epsilon(x)-h_0^\epsilon(y)}^{r} \: \f{dx \, dy}{\abs{x-y}^{1+s_1 r}} \right)^{p/r}  \right]^{1/p}
    \end{align}
	We start by bounding the first term on the right hand side of \eqref{triangular inequality}. Note that
	$C_2 := \Int{x}{} \Int{\abs{x-y}<1}{} e^{- r \rho (x-1)} \abs{x-y}^r  \: \f{dx \, dy}{\abs{x-y}^{1+s_1 r}} < \infty$. Thus by Jensen's inequality with the convex function $x\mapsto x^{p/r}$ and \eqref{Eq:akbk}
    \begin{align*}
        & \mathbb{E} \left[ \left( \Int{x}{} \Int{\abs{y-x}\leq 1}{} \abs{h_0^\epsilon(x)}^{r} e^{- r \rho (x-1)} \abs{x-y}^r  \: \f{dx \, dy}{\abs{x-y}^{1+s_1 r}} \right)^{p/r}  \right]^{1/p} \\
        &\leq C_2^{\f{p-r}{rp}} \mathbb{E} \left[ \Int{x}{} \Int{\abs{y-x}\leq 1}{} \abs{h_0^\epsilon(x)}^{p} e^{- r \rho (x-1)} \abs{x-y}^r  \: \f{dx \, dy}{\abs{x-y}^{1+s_1 r}}   \right]^{1/p} \\
        &\leq C_2^{\f{p-r}{rp}} \left( \Int{x}{} a_{p/2} \abs{x}^{p/2} e^{- r \rho (x-1)/2} \: dx \Int{\abs{u}\leq 1}{} \abs{u}^{-1+r(1-s_1)} \: du \right)^{1/p} < \infty\;.
    \end{align*}
    Similarly the second term on the right hand side of \eqref{triangular inequality} can be bounded as follows
    \begin{align*}
        & \mathbb{E} \left[ \left( \Int{x}{} \Int{\abs{y-x}\leq 1}{} e^{-r\rho y} \abs{h_0^\epsilon(x)-h_0^\epsilon(y)}^r \: \f{dx \, dy}{\abs{x-y}^{1+s_1 r}} \right)^{p/r}  \right]^{1/p} \\
        &= \mathbb{E} \left[ \left( \Int{x}{} \Int{\abs{y-x}\leq 1}{} e^{-r\rho y} \frac{\abs{h_0^\epsilon(x)-h_0^\epsilon(y)}^r}{\abs{x-y}^{\frac{r}{2}}} \: \f{dx \, dy}{\abs{x-y}^{1+(s_1-\frac12) r}} \right)^{p/r}  \right]^{1/p} \\
        &\leq C_3^{\f{p-r}{rp}}\mathbb{E} \left[ \Int{x}{} \Int{\abs{y-x}\leq 1}{} e^{-r\rho y} \frac{\abs{h_0^\epsilon(x)-h_0^\epsilon(y)}^p}{\abs{x-y}^{\frac{p}{2}}} \: \f{dx \, dy}{\abs{x-y}^{1+(s_1-\frac12) r}}  \right]^{1/p}\\
        &\leq C_3^{\f{p-r}{rp}} \left( \Int{x}{} b_{p/2}  e^{- r \rho x} \: dx \Int{\abs{u}\leq 1}{} \abs{u}^{-1+r(\frac12 - s_1)}   \: du   \right)^{1/p} < \infty
    \end{align*}
    with $C_3:=\Int{x}{} \Int{\abs{x-y}<1}{} e^{- r \rho (x-1)} \abs{x-y}^{r/2}  \: \f{dx \, dy}{\abs{x-y}^{1+s_1 r}} < \infty$, and where we used \eqref{Eq:akbk}.
    Third, we have
    \begin{align*}
        \mathbb{E}& \left[ \left( \Int{x}{} \Int{\abs{y-x} > 1}{} \abs{g^\epsilon_0(x)-g_0^\epsilon(y)}^{r} \: \f{dxdy}{\abs{x-y}^{1+s_1 r}} \right)^{p/r}  \right]^{1/p} \\
        &\leq 2 \mathbb{E} \left[ \left( \Int{x}{} \Int{\abs{y-x} > 1}{} \abs{h^\epsilon_0(x)}^r e^{-r\rho x} \: \f{dxdy}{\abs{x-y}^{1+s_1 r}} \right)^{p/r}  \right]^{1/p} \\
        &\leq 2 C_4^{\f{p-r}{rp}} \mathbb{E} \left[ \Int{x}{} \Int{\abs{y-x} > 1}{} \abs{h^\epsilon_0(x)^p}e^{-p \rho x/2}   \: \f{dxdy}{\abs{x-y}^{1+s_1 r}}   \right]^{1/p} \\
        &\leq 2 C_4^{\f{p-r}{rp}} \Int{x}{} \Int{\abs{y-x} > 1}{} a_{p/2} \abs{x}^{p/2} e^{-p \rho x/2}  \: \f{dxdy}{\abs{x-y}^{1+s_1 r}}  < \infty
    \end{align*}
    with $C_4:= \Int{x}{}  \Int{\abs{x-y}\geq 1}{} (e^{-r \rho x/2})^{\f{p}{p-r}} \: \f{dxdy}{\abs{x-y}^{1+s_1 r}}$, and where we used Lemma \ref{lemma moment estimate on the increments SSRW conditioned} in the fourth line. The last three points conclude the proof of \eqref{equation estimation positive sobolev}.

\end{proof}

\subsection{Moment estimate for the \texorpdfstring{$\mathcal{H}^{-s_0}$}{H}-norm of the time increments} \label{subsection w -s_0}
For any $s_0 \geq 0$, let us introduce the Sobolev space of distributions
\begin{equation}
    \mathcal{H}^{- s_0}:=\left\lbrace f \in \mathcal{S}'(\R) \:: \: \Int{\R}{} \left( 1+\abs{\zeta}^2 \right)^{- s_0} \abs{\hat{f}(\zeta)}^2 \: d\zeta =: \norme{f}_{\mathcal{H}^{-s_0}}^2 < \infty \right\rbrace,
\end{equation}
where $\hat{f}$ denotes the Fourier transform of $f$. The aim of this paragraph is to prove the following result.
\begin{proposition} \label{estimation negative sobolev}
    For any $s_0 >1/2$ and any integer $p\geq 1$ there exists $c>0$ such that for every $0 \leq s \leq t \leq T$
        \begin{equation}
        \displaystyle{\sup_{\epsilon \in (0,1]} \; \mathbb{E} \left[ \norme{\bar{g}^\epsilon_t -\bar{g}^\epsilon_s}_{\mathcal{H}^{-s_0}}^{2p} \right]^{\f{1}{2p}} \leq c (t-s)^{3/8}}.
    \end{equation}
\end{proposition}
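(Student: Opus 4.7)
The plan is to combine a Fourier-analytic representation of the negative Sobolev norm with the Lyons–Zheng time-reversal decomposition — which exploits the stationarity and reversibility of $h^\epsilon$ established in Section \ref{preliminary section} and bypasses the reflection/drift term — followed by a double application of the Burkholder–Davis–Gundy inequality to the resulting Poisson-driven martingales. Since $s_0 > 1/2$, the measure $\mu(d\zeta):=(1+|\zeta|^2)^{-s_0}\,d\zeta$ has finite total mass, so Jensen's inequality yields
\[
\E\|\bar g^\epsilon_t-\bar g^\epsilon_s\|^{2p}_{\mathcal{H}^{-s_0}} \leq \mu(\R)^{p-1}\int_\R \E\big|\widehat{(\bar g^\epsilon_t-\bar g^\epsilon_s)}(\zeta)\big|^{2p}\,\mu(d\zeta),
\]
and it will be enough to control the pointwise-in-$\zeta$ Fourier moment. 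Using that $\bar h^\epsilon$ is piecewise linear in $x$, I would rewrite $\widehat{\bar g^\epsilon_u}(\zeta) = \langle \bar h^\epsilon_u,\psi_{\epsilon,\zeta}\rangle_\epsilon$ plus a negligible quadrature error, where $\psi_{\epsilon,\zeta}(k):=e^{-(i\zeta+\rho)k}$ satisfies the crucial uniform bound $\|\psi_{\epsilon,\zeta}\|^2_{L^2_\epsilon}\leq C_\rho$ \emph{independent of $\zeta$ and $\epsilon$}. This is exactly the mechanism by which the exponential weight in the definition of $g^\epsilon$ absorbs the infinite-volume difficulty of the problem.

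\textbf{Lyons–Zheng and double BDG at lattice times.} I would then pass to lattice times $s':=\lceil s\epsilon^{-2}\rceil\epsilon^2$ and $t':=\lfloor t\epsilon^{-2}\rfloor\epsilon^2$ (where $\bar g^\epsilon = g^\epsilon$) via the decomposition
\[
\bar g^\epsilon_t-\bar g^\epsilon_s = (g^\epsilon_{t'}-g^\epsilon_{s'}) + (\bar g^\epsilon_t-g^\epsilon_{t'}) + (g^\epsilon_{s'}-\bar g^\epsilon_s).
\]
For the middle piece, reversibility and stationarity of $\pi^\epsilon$ give the Lyons–Zheng identity
\[
\langle h^\epsilon_{t'}-h^\epsilon_{s'},\psi\rangle_\epsilon = \tfrac12(M^{\epsilon,\psi}_{t'}-M^{\epsilon,\psi}_{s'}) - \tfrac12(\widetilde M^{\epsilon,\psi}_{T-s'}-\widetilde M^{\epsilon,\psi}_{T-t'}),
\]
where $M^{\epsilon,\psi}$ is the forward martingale in the semimartingale equation \eqref{second semimartingale eq} tested against $\psi$, and $\widetilde M^{\epsilon,\psi}$ is the analogous martingale for the time-reversed process; by reversibility they have the same law. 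This is what sidesteps direct estimates on the drift and reflection in \eqref{third semimartingale equation}. A first BDG gives $\E|M^{\epsilon,\psi}_{t'}-M^{\epsilon,\psi}_{s'}|^{2p}\leq C_p\,\E([M^{\epsilon,\psi}]_{t'}-[M^{\epsilon,\psi}]_{s'})^p$ with quadratic variation $4\epsilon^3\sum_k|\psi(k)|^2\int_{s'}^{t'}\mathbf{1}\{\text{corner at }k,\text{ flip allowed}\}\,dN^\epsilon$; writing $dN^\epsilon=\epsilon^{-2}du+dM^\epsilon$, the compensator is controlled uniformly in $\epsilon$ via the stationary corner-density result (Lemma \ref{lemme 2 annexe}) by $C(t'-s')\|\psi\|_{L^2_\epsilon}^2$, and the remaining martingale is treated by a \emph{second} BDG — the double-BDG trick of \cite{giacomin,etheridge2015scaling}. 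Specialising to $\psi=\psi_{\epsilon,\zeta}$ this yields, uniformly in $\zeta$ and $\epsilon$, $\E\|g^\epsilon_{t'}-g^\epsilon_{s'}\|_{\mathcal{H}^{-s_0}}^{2p}\leq C(t'-s')^p$, i.e. the rate $1/2$ on the lattice.

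\textbf{Time interpolation and assembly.} For the two boundary pieces, which live over time intervals of length at most $\epsilon^2$, I would use the linear-in-time interpolation to write $\bar g^\epsilon_t-g^\epsilon_{t'}=\tfrac{t-t'}{\epsilon^2}(g^\epsilon_{t'+\epsilon^2}-g^\epsilon_{t'})$, apply the previous step to the lattice increment to obtain $\E\|g^\epsilon_{t'+\epsilon^2}-g^\epsilon_{t'}\|^{2p}_{\mathcal{H}^{-s_0}}\leq C\epsilon^{2p}$, and conclude that
\[
\E\|\bar g^\epsilon_t-g^\epsilon_{t'}\|^{2p}_{\mathcal{H}^{-s_0}}\leq C\,\tfrac{(t-t')^{2p}}{\epsilon^{2p}}.
\]
Using $(t-t')\leq\epsilon^2$, one then has $(t-t')^{2p}/\epsilon^{2p}=(t-t')^{3p/4}\cdot(t-t')^{5p/4}/\epsilon^{2p}\leq (t-t')^{3p/4}\epsilon^{p/2}\leq(t-t')^{3p/4}$, so the interpolation pieces also satisfy the desired bound. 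Combining the three pieces by the triangle inequality in $L^{2p}(\Omega,\mathcal{H}^{-s_0})$ (and absorbing $(t-s)^{p/4}\leq T^{p/4}$ when $|t-s|\geq\epsilon^2$) yields $\E\|\bar g^\epsilon_t-\bar g^\epsilon_s\|^{2p}_{\mathcal{H}^{-s_0}}\leq C(t-s)^{3p/4}$, and taking the $(2p)$-th root gives the claim. Note that the exponent $3/8=\tfrac34\cdot\tfrac12$ is dictated \emph{precisely} by the small-time interpolation regime $|t-s|<\epsilon^2$; on lattice times Lyons–Zheng plus double BDG provides the stronger rate $1/2$.

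\textbf{Main obstacle.} The technical heart will be executing the double-BDG step while keeping all constants uniform in $\epsilon$ in infinite volume. The test functions $\psi_{\epsilon,\zeta}$ are only $L^2_\epsilon$-bounded (with no uniform $\ell^\infty$-summability in $k$), so the higher $L^p$-norms that appear after iterating BDG cannot be absorbed by the crude $L^\infty$-estimates available on a finite segment as in \cite{etheridge2015scaling}; they will instead have to be handled by combining Minkowski-type rearrangements, the exponential weight $e^{-\rho x}$ entering $\psi_{\epsilon,\zeta}$, and the stationary corner density from Lemma \ref{lemme 2 annexe}, in order to close the estimate uniformly in $\epsilon$.
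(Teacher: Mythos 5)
Your proposal follows essentially the same architecture as the paper's proof: (1) reduce the $\mathcal{H}^{-s_0}$-norm to pointwise-in-$\zeta$ moments of the Fourier transform (you use Jensen with the finite measure $(1+|\zeta|^2)^{-s_0}\,d\zeta$; the paper uses an iterated Cauchy--Schwarz across the $p$-fold product, but either closes the reduction since $s_0>1/2$); (2) control lattice-time increments via Lyons--Zheng plus double BDG on the Fourier-tested martingale, using the exponential weight in $\psi_{\epsilon,\zeta}$ to get bounds uniform in $\zeta$ and in the infinite spatial domain; (3) handle the piecewise-linear interpolation in time. This is the same route.

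There is, however, one quantitative claim that does not match the estimate the method actually produces, and it is worth flagging because it bears on where the exponent $3/8$ comes from. You assert that on lattice times the double BDG yields the clean rate $\E\|g^\epsilon_{t'}-g^\epsilon_{s'}\|^{2p}_{\mathcal{H}^{-s_0}}\le C(t'-s')^p$, and in particular that a single time step satisfies $\E\|g^\epsilon_{t'+\epsilon^2}-g^\epsilon_{t'}\|^{2p}_{\mathcal{H}^{-s_0}}\le C\epsilon^{2p}$. The second BDG, applied to the martingale $\hat D_{s,\cdot}(\zeta)=[\hat M_{s,\cdot}(\zeta)]-\langle\!\langle\hat M_{s,\cdot}(\zeta)\rangle\!\rangle$, produces a \emph{separate} additive term of order $\epsilon^{3/4}$ (from $\|[\hat D]_t\|_{L^m}\lesssim\epsilon^3$), so the correct lattice estimate is $\|\hat g^\epsilon_t(\zeta)-\hat g^\epsilon_s(\zeta)\|_{L^m}\lesssim(t-s)^{1/2}+\epsilon^{3/4}$. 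In particular the one-step bound is $\epsilon^{3/4}$, not $\epsilon$, since $\epsilon^{3/4}>\epsilon$ for $\epsilon\in(0,1)$. This $\epsilon^{3/4}$ is not absorbable by $(t'-s')^{1/2}$ when $t'-s'<\epsilon^{3/2}$, so the lattice rate is \emph{not} uniformly $1/2$; the $\epsilon^{3/4}$ term is precisely what caps the global rate at $3/8$ on both sides of the $|t-s|\lessgtr\epsilon^2$ dichotomy, not just in the interpolation regime as you state. Fortunately, redoing your interpolation arithmetic with the correct one-step bound $\epsilon^{3/4}$ still gives $(t-t')\epsilon^{-5/4}\le(t-t')^{3/8}$ for $t-t'\le\epsilon^2$, so the final exponent $3/8$ is unchanged; only the stated intermediate estimates need correcting. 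A smaller inaccuracy: the predictable bracket $\langle\!\langle\hat M\rangle\!\rangle$ is controlled by a purely deterministic bound (using $|\Delta^\epsilon h^\epsilon|\le2\sqrt\epsilon$, $|c_{\zeta,\epsilon}|\le\epsilon$, and $\sum_x e^{-2\rho x}\lesssim\epsilon^{-1}$), not via the stationary corner-density lemma; that lemma only enters later, in the convergence of the bracket of $W^\epsilon$.
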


\noindent To prove the above statement, we need the following estimate on the time increments of the Fourier transform of $g^\epsilon$.

\begin{lemma} \label{Holder moment estimate sobolev} For any $T>0$, any integer $m \geq 1$, there exists a constant $c_{m,T,\rho}>0$ such that for any $0 \leq s \leq t \leq T$ any $\epsilon \in (0,1]$, and any $\zeta \in \R$
    \begin{equation}
        \norme{\hat{g}^\epsilon_t(\zeta) -\hat{g}^\epsilon_s(\zeta)}_{L^m(\Omega)} \leq c_{m,T,\rho} \left( (t-s)^{1/2} +\epsilon^{3/4}  \right).
    \end{equation}
\end{lemma}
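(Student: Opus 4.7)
The plan is to exploit the reversibility of the dynamic via the Lyons–Zheng decomposition so as to bypass the reflection drift in \eqref{third semimartingale equation} (which is the obstruction to a naive martingale argument), and then to apply a Burkholder–Davis–Gundy estimate for Poisson stochastic integrals. Denote by $F_\zeta(h) := \int_0^\infty e^{-(i\zeta+\rho)x}\, h(x)\, dx$, so that $\hat g^\epsilon_t(\zeta) = F_\zeta(h^\epsilon_t)$. Split $F_\zeta$ into real and imaginary parts; each is an $L^1(\pi^\epsilon)$-integrable functional of the state thanks to the exponential weight and the Lamperti-type moment bounds \eqref{lamperti}–\eqref{bound from lamperti}. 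Since the dynamic is stationary and reversible with respect to $\pi^\epsilon$, Lyons–Zheng \cite{lyonszheng} provides
\[
F_\zeta(h^\epsilon_t) - F_\zeta(h^\epsilon_s) = \tfrac{1}{2}(M^+_t - M^+_s) - \tfrac{1}{2}(\tilde M^-_{T-s} - \tilde M^-_{T-t}),
\]
where $M^+$ is the forward martingale coming from the usual Dynkin decomposition of $F_\zeta(h^\epsilon_\cdot)$, and $\tilde M^-$ is the analogous martingale for the time-reversed process; the reflection drift cancels out thanks to the $\frac{1}{2}/\frac{1}{2}$ symmetrization.

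I would then write out $M^+$ explicitly as a stochastic integral against the compensated Poisson martingales $M^\epsilon(k)$:
\[
M^+_t = \sum_{k\in\epsilon\N^*}\int_0^t c_k(\zeta,\epsilon)\,\Delta^\epsilon h^\epsilon_r(k)\,\mathbf{1}\{h^\epsilon_r(k)+\Delta^\epsilon h^\epsilon_r(k)\ge 0\}\,dM^\epsilon_r(k),
\]
where $c_k(\zeta,\epsilon)$ denotes the integral of $e^{-(i\zeta+\rho)x}$ against the piecewise-linear hat function at $k$ and satisfies $|c_k(\zeta,\epsilon)|\le C\epsilon e^{-\rho k}$ uniformly in $\zeta$. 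The Burkholder–Davis–Gundy inequality for purely discontinuous martingales yields
\[
\mathbb{E}[|M^+_t-M^+_s|^{2p}] \le C_p\,\mathbb{E}\bigl[(\langle M^+\rangle_t-\langle M^+\rangle_s)^p\bigr] + C_p\,\mathbb{E}\Big[\sum_{s<r\le t}|\Delta M^+_r|^{2p}\Big].
\]

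Both terms on the right are then estimated using only that $|\Delta^\epsilon h^\epsilon|\le 2\sqrt\epsilon$ and that the clocks have intensity $\epsilon^{-2}$. The predictable quadratic variation is bounded as
\[
\langle M^+\rangle_t-\langle M^+\rangle_s \le C\epsilon^{-2}(t-s)\sum_{k\in\epsilon\N^*}|c_k|^2\cdot 4\epsilon \le C(t-s),
\]
because $\sum_{k\in\epsilon\N^*} e^{-2\rho k}\sim 1/(2\rho\epsilon)$ combines with the $\epsilon^2$ from $|c_k|^2$ and the $\epsilon$ from $(\Delta^\epsilon h^\epsilon)^2$ to cancel the $\epsilon^{-2}$ Poisson intensity. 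This gives the $(t-s)^{1/2}$ term. For the jump contribution, each jump only involves a single site and has size at most $C\epsilon^{3/2}$, so $|\Delta M^+_r|^{2p}\le C\epsilon^{3(p-1)}|\Delta M^+_r|^2$ and after taking expectations one gets a bound of order $\epsilon^{3(p-1)/(2p)}(t-s)^{1/(2p)}$. An elementary interpolation (splitting into the regimes $t-s\lessgtr\epsilon^{3/2}$) shows that this is dominated by $C(\epsilon^{3/4}+(t-s)^{1/2})$ for $p=2$, and the required bound for arbitrary integer $m\ge 1$ follows by monotonicity of $L^m$ norms. The backward martingale $\tilde M^-$ is handled identically, using reversibility to identify its quadratic variation with that of $M^+$.

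The main technical obstacle is the careful handling of the jump-correction term in BDG: this is the only place where a genuinely $\epsilon$-dependent error enters the bound, and it is precisely this term that produces the $\epsilon^{3/4}$ correction in the statement. A subsidiary difficulty is to make the Lyons–Zheng decomposition rigorous for the noncylindrical functional $F_\zeta$, which can be addressed by a standard truncation argument exploiting the exponential tail of $F_\zeta$ and the uniform-in-$\epsilon$ moment estimates inherited from \eqref{lamperti}–\eqref{bound from lamperti}.
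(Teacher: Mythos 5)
Your proposal is correct and follows the same overall strategy as the paper: a Lyons--Zheng decomposition to reduce the estimate to a forward and a backward martingale, followed by a BDG-type moment bound. The one genuine difference is in the handling of the jump correction. You invoke a single Burkholder--Rosenthal inequality of the form
\[
\mathbb{E}\bigl[|M^+_t-M^+_s|^{2p}\bigr]\lesssim \mathbb{E}\bigl[(\langle M^+\rangle_t-\langle M^+\rangle_s)^p\bigr]+\mathbb{E}\Bigl[\sum_{s<r\le t}|\Delta M^+_r|^{2p}\Bigr]
\]
and then control the sum of $2p$-th powers of jumps by the deterministic bound $|\Delta M^+_r|\lesssim\epsilon^{3/2}$ and the first-moment identity $\mathbb{E}[[M^+]_t-[M^+]_s]=\mathbb{E}[\langle M^+\rangle_t-\langle M^+\rangle_s]$. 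The paper instead applies the plain BDG inequality \emph{twice}: first on $\hat M$ to reduce to the optional bracket $[\hat M]=\langle\!\langle \hat M\rangle\!\rangle+\hat D$, and then a second time on the compensator martingale $\hat D:=[\hat M]-\langle\!\langle\hat M\rangle\!\rangle$, so that the jump contribution shows up through $[\hat D]$ and its Poisson-moment estimate $\|[\hat D]\|_{L^{m/4}}\lesssim\epsilon^3$. The two routes are morally equivalent and yield the same $(t-s)^{1/2}+\epsilon^{3/4}$ rate, but yours requires the slightly less standard Rosenthal form, whereas the double-BDG route used in the paper needs only the textbook BDG inequality, applied twice.

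Two minor slips to fix. First, the closing appeal to ``monotonicity of $L^m$ norms'' goes the wrong way: on a probability space $\|X\|_{L^m}$ is \emph{increasing} in $m$, so the $p=2$ case gives the bound only for $m\le 4$. What actually saves you is that your estimate $\epsilon^{3(p-1)/(2p)}(t-s)^{1/(2p)}\le T^{1/(2p)}\epsilon^{3/4}$ already holds for every $p\ge 2$ (since $3(p-1)/(2p)\ge 3/4$ and $t-s\le T$), so for a given $m$ one simply chooses $p\ge\max(m/2,2)$ and then uses $\|\cdot\|_{L^m}\le\|\cdot\|_{L^{2p}}$. Second, the identity $\hat g^\epsilon_t(\zeta)=F_\zeta(h^\epsilon_t)$ with $F_\zeta(h)=\int_0^\infty e^{-(i\zeta+\rho)x}h(x)\,dx$ is not exact: in the paper $g^\epsilon_t$ is the piecewise-linear interpolation of the lattice values $e^{-\rho k}h^\epsilon_t(k)$, which is not the same function of $x$ as $e^{-\rho x}$ times the piecewise-linear interpolation of $h^\epsilon_t$. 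The coefficients you denote $c_k(\zeta,\epsilon)$ differ accordingly from the $c_{\zeta,\epsilon}e^{-i\zeta k}e^{-\rho k}$ appearing in Lemma~\ref{lemma expression fourrier coefficients}, but both satisfy the same bound $|c_k|\lesssim\epsilon e^{-\rho k}$, so this has no impact on the estimates.
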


\begin{proof}
    First, let us use Lyons-Zheng's decomposition~\cite{lyonszheng} to reduce the above estimate to a control on the moments of some martingale. For $\zeta \in \R$, let us define $e_\zeta:x \mapsto e^{-i\zeta x}$ . We consider the Markov process $(h^\epsilon_t)_{t \geq 0}$ and denote $L^\epsilon$ its generator. Then, applying Dynkin's formula to the Markov process $h^\epsilon$ and the function $f_\zeta(\cdot):=\left\langle \cdot , e_\zeta \right\rangle_{L^2([0,\infty),e^{-\rho x}dx)}$, we obtain
    \begin{equation} \label{forward Dynkin}
        f_\zeta(h^\epsilon_t) = f_\zeta(h^\epsilon_s) + \Int{s}{t} L^\epsilon f_\zeta (h^\epsilon_r) \: dr + \hat{M}_{s,t}(\zeta)
    \end{equation}
    where the process $\hat{M}_{s,\cdot}(\zeta)$ is a martingale.
    Additionally, writing Dynkin's formula for the backward process we obtain
    \begin{align*}
    f_\zeta(h^\epsilon_{T-(T-s)})& = f_\zeta(h^\epsilon_{T-(T-t)}) + \Int{T-t}{T-s} L^\epsilon 
    f_\zeta(h^\epsilon_{T-r}) \: dr + \hat{N}_{s,t}(\zeta) 
    \end{align*}
    where the process $\hat{N}_{s,\cdot}(\zeta)$ is a backward martingale. Note that to obtain the last equality, we used the fact that the dynamic is reversible with respect to $\pi$ and that we start from stationarity, which implies that the generator of the backward process is identical to the one of the forward process. Now last equation rewrites
    \begin{equation} \label{backward Dynkin}
    f_\zeta(h^\epsilon_{s}) = f_\zeta(h^\epsilon_{t}) + \Int{s}{t} L^\epsilon f_\zeta(h^\epsilon_r) \: dr + \hat{N}_{s,t}(\zeta) .
    \end{equation}
    Subtracting the forward and backward equations \eqref{forward Dynkin} and \eqref{backward Dynkin}, we obtain
    \begin{align*}
        \hat{g}^\epsilon_t(\zeta)-\hat{g}^\epsilon_s(\zeta) 
        &= f_\zeta(h^\epsilon_t)-f_\zeta(h^\epsilon_s) 
        = \f{1}{2}\left[ \hat{M}_{s,t}(\zeta) - \hat{N}_{s,t}(\zeta)  \right].
    \end{align*}
    Without loss of generality we can focus on the forward martingale. Let us start by giving a more explicit formula for the martingale term $\hat{M}_{s,t}(\zeta)$. Using the expression given by Lemma \ref{lemma expression fourrier coefficients} for the Fourier transform of $g^\epsilon_t$, and the rescaled semimartingale equation, for $\zeta \in \R$ we have
    \begin{multline*}
    \hat{g}^\epsilon_t(\zeta)-\hat{g}^\epsilon_s(\zeta) =c_{\zeta,\epsilon} \sum_{x \in \epsilon \N^*}{} e^{-i\zeta x} e^{- \rho x} \left[ h^\epsilon_t(x)-h^\epsilon_s(x) \right] \\
    =c_{\zeta,\epsilon} \sum_{x \in \epsilon \N^*}{} e^{-i\zeta x}e^{- \rho x} \f{1}{\epsilon^2} \Int{s}{t} \Delta^\epsilon h^\epsilon_r(x) 1 \left\lbrace h^\epsilon_r(x) +\Delta^\epsilon h^\epsilon_r(x) \geq 0 \right\rbrace \: dr \\
    +c_{\zeta,\epsilon} \sum_{x \in \epsilon \N^*}{} e^{-i\zeta x} e^{- \rho x}  \Int{s}{t} \Delta^\epsilon h^\epsilon_r(x) 1 \left\lbrace h^\epsilon_r(x) +\Delta^\epsilon h^\epsilon_r(x) \geq 0 \right\rbrace \: dM^\epsilon_r(x).
    \end{multline*}
    Comparing with the forward Dynkin's formula, and using uniqueness of the decomposition for a semimartingale, we obtain
    \begin{equation*}
        \hat{M}_{s,t}(\zeta)=c_{\zeta,\epsilon} \sum_{x \in \epsilon \N^*}{} e^{-i\zeta x} e^{- \rho x}  \Int{s}{t}  \Delta^\epsilon h^\epsilon_r(x) 1 \left\lbrace h^\epsilon_r(x) +\Delta^\epsilon h^\epsilon_r(x) \geq 0 \right\rbrace \: dM^\epsilon_r(x).
    \end{equation*}
    By independence of the martingales $(M^\epsilon_\cdot(x ))_{x \in \epsilon \N^*}$ the bracket of $\hat{M}_{s,\cdot}(\zeta)$ writes as
    \begin{equation*}
        \langle\!\langle \hat{M}_{s,\cdot} (\zeta) \rangle\!\rangle_t = c_{\zeta,\epsilon}^2 \sum_{x \in \epsilon \N^*}{} e^{-2i\zeta x} e^{-2 \rho x} \Int{s}{t} \Delta^\epsilon h^\epsilon_r(x)^2 1 \left\lbrace h^\epsilon_r(x) +\Delta^\epsilon h^\epsilon_r(x)\geq 0 \right\rbrace \: \f{1}{\epsilon^2}dr
    \end{equation*}
    which, recalling \eqref{def and bound for coef c}, is bounded as follows
\begin{align} \label{bound bracket}
    \abs{\langle\!\langle \hat{M}_{s,\cdot} (\zeta) \rangle\!\rangle_t }
    &\lesssim \epsilon^2 \f{1}{ \epsilon} (2\sqrt{\epsilon})^2 (t-s) \f{1}{\epsilon^2}  \nonumber \\
    &\lesssim (t-s)
\end{align}
where the constant involved in $\lesssim$ depends only on $\rho$, in particular it is uniform in $\epsilon \in (0,1]$.
Then, turning to the quadratic variation term, we have
\begin{equation*}
    \left[ \hat{M}_{s,\cdot} (\zeta) \right]_t = 
    c_{\zeta,\epsilon}^2 \sum_{x \in \epsilon \N^*}{} e^{-2i\zeta x} e^{-2 \rho x}  \sum_{s \leq \tau \leq t}{} \Delta^\epsilon h^\epsilon_\tau (x)^2 1 \left\lbrace h^\epsilon_\tau  (x) +\Delta^\epsilon h^\epsilon_\tau (x) \geq 0 \right\rbrace \left( M^\epsilon_\tau(x) -M^\epsilon_{\tau^-}(x) \right)^2.
\end{equation*}
Setting $\hat{D}_{s,t}(\zeta):=\left[ \hat{M}_{s,\cdot} (\zeta) \right]_t - \langle\!\langle \hat{M}_{s,\cdot} (\zeta) \rangle\!\rangle_t$, then $\hat{D}_{s,\cdot}(\zeta)$ is a martingale and
\begin{equation*}
    \left[ \hat{D}_{s,\cdot}(\zeta) \right]_t = c_{\zeta,\epsilon}^4 \sum_{x \in \epsilon \N^*}{} e^{-4i\zeta x} e^{-4 \rho x}  \sum_{s \leq \tau \leq t}{}
    \Delta^\epsilon h^\epsilon_\tau (x)^4 1 \left\lbrace h^\epsilon_\tau +\Delta^\epsilon h^\epsilon_\tau (x) \geq 0 \right\rbrace \left( M^\epsilon_\tau(x) -M^\epsilon_{\tau^-}(x) \right)^4.
\end{equation*}
We obtain the following bound
\begin{align} \label{bound quadratic variation}
    \left\|  \left[ \hat{D}_{s,\cdot}(\zeta) \right]_t \right\|_{L^m(\Omega)} &\leq c_{\zeta,\epsilon}^4 \f{1}{\epsilon} (2\sqrt{\epsilon})^4 \norme{\mathcal{P}(\epsilon^{-2}(t-s))}_{L^m(\Omega)} \nonumber \\ \nonumber
    &\lesssim \epsilon^{4 -1 +2} \left( \epsilon^{-2} (t-s) + (\epsilon^{-2} (t-s))^{1/m} \right) \\ \nonumber
    &\lesssim \epsilon^3 \left( (t-s) + T^{1/m} \right) \\
    &\lesssim \epsilon^3
\end{align}
where we used the abuse of notation $\mathcal{P}(\lambda)$ to denote a Poisson random variable of parameter $\lambda$, and where the constant involved in $\lesssim$ depends only on $m$, $\rho$ and $T$, in particular it is uniform in $\epsilon \in (0,1]$. Now applying twice the Burkholder-Davis-Gundy formula yields the following (general) inequality
\begin{equation}
    \norme{\hat{M}_{s,t}(\zeta)}_{L^{m}(\Omega)} \leq c_{BDG}(m) \left( \norme{\langle\!\langle \hat{M}_{s,t}(\zeta) \rangle\!\rangle_t}_{L^{\f{m}{2}}(\Omega)}^{\f{1}{2}} + c_{BDG} \left(\f{m}{2} \right)^{\f{1}{2}} \norme{\hat{D}_{s,t}(\zeta)}_{L^{\f{m}{4}}(\Omega)}^{\f{1}{4}} \right).
\end{equation}
Combining this with estimates \eqref{bound bracket} and \eqref{bound quadratic variation}, we obtain
\begin{equation}
    \norme{\hat{M}_{s,t}(\zeta)}_{L^{m}(\Omega)} \lesssim \left( (t-s)^{\f{1}{2}} + \epsilon^{\f{3}{4}} \right)
\end{equation}
with the constant involved in $\lesssim$ depending only on $m$, $\rho$ and $T$, in particular it is uniform in $\epsilon \in (0,1]$. This concludes the proof.
\end{proof}

\begin{lemma} (Linearization in time) \label{linearization in time} For every integer $m\geq 1$ there exists $c(m)>0$ such that for all $0 \leq s \leq t \leq T$
    \begin{equation*}
        \displaystyle{\sup_{\epsilon \in (0,1], \zeta \in \R}} \norme{\hat{\bar{g}}^\epsilon_t(\zeta) -\hat{\bar{g}}^\epsilon_s(\zeta)}_{L^m(\Omega)} \leq c(m) (t-s)^{3/8}
    \end{equation*}
\end{lemma}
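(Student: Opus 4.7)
The plan is to reduce the claim to Lemma \ref{Holder moment estimate sobolev} by exploiting the piecewise linear structure in time of $\bar{g}^\epsilon$: for each fixed $\zeta$, the map $t \mapsto \hat{\bar{g}}^\epsilon_t(\zeta)$ is affine on each cell $[k\epsilon^2,(k+1)\epsilon^2]$ and coincides with $\hat{g}^\epsilon_\cdot(\zeta)$ at the endpoints. I would split the argument into two regimes depending on whether the increment $t-s$ is larger or smaller than the mesh $\epsilon^2$.

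In the large-increment regime $t-s \geq \epsilon^2$, I would snap $s$ and $t$ to the neighbouring grid points $s^+ := \lceil s\epsilon^{-2}\rceil \epsilon^2$ and $t^- := \lfloor t\epsilon^{-2}\rfloor \epsilon^2$ (for which $s^+ \leq t^-$ automatically holds) and decompose
\begin{equation*}
\hat{\bar{g}}^\epsilon_t(\zeta) - \hat{\bar{g}}^\epsilon_s(\zeta) = \bigl[\hat{\bar{g}}^\epsilon_t(\zeta) - \hat{g}^\epsilon_{t^-}(\zeta)\bigr] + \bigl[\hat{g}^\epsilon_{t^-}(\zeta) - \hat{g}^\epsilon_{s^+}(\zeta)\bigr] + \bigl[\hat{g}^\epsilon_{s^+}(\zeta) - \hat{\bar{g}}^\epsilon_s(\zeta)\bigr].
\end{equation*}
By affine interpolation, each of the two outer brackets is bounded in absolute value by one full cell increment of $\hat{g}^\epsilon_\cdot(\zeta)$, whose $L^m$-norm Lemma \ref{Holder moment estimate sobolev} controls by $\lesssim \epsilon + \epsilon^{3/4} \lesssim \epsilon^{3/4}$. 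The middle bracket is bounded by the same lemma by $\lesssim (t-s)^{1/2} + \epsilon^{3/4}$. Since $\epsilon^2 \leq t-s$ gives $\epsilon^{3/4} \leq (t-s)^{3/8}$, and $(t-s)^{1/2} \leq T^{1/8}(t-s)^{3/8}$ for $t-s \leq T$, the claim follows in this regime.

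For the small-increment regime $t-s < \epsilon^2$, I would exploit the affine representation directly. If $s$ and $t$ fall in a common cell $[k\epsilon^2,(k+1)\epsilon^2]$, then
\begin{equation*}
\hat{\bar{g}}^\epsilon_t(\zeta) - \hat{\bar{g}}^\epsilon_s(\zeta) = \frac{t-s}{\epsilon^2}\bigl[\hat{g}^\epsilon_{(k+1)\epsilon^2}(\zeta) - \hat{g}^\epsilon_{k\epsilon^2}(\zeta)\bigr],
\end{equation*}
and the cell-increment bound $\lesssim \epsilon^{3/4}$ in $L^m$ from Lemma \ref{Holder moment estimate sobolev} yields
\begin{equation*}
\bigl\|\hat{\bar{g}}^\epsilon_t(\zeta) - \hat{\bar{g}}^\epsilon_s(\zeta)\bigr\|_{L^m(\Omega)} \lesssim (t-s)\,\epsilon^{-5/4} = (t-s)^{3/8}\,(t-s)^{5/8}\,\epsilon^{-5/4} \leq (t-s)^{3/8},
\end{equation*}
where the last inequality uses $t-s < \epsilon^2$, hence $(t-s)^{5/8} \leq \epsilon^{5/4}$. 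If instead $s$ and $t$ lie in adjacent cells, I would split the increment at the intermediate grid point and apply the same estimate to each of the two sub-increments.

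The whole argument is essentially bookkeeping once Lemma \ref{Holder moment estimate sobolev} is available, and requires no additional probabilistic input; the uniformity in $\zeta$ and $\epsilon$ is inherited from that lemma. I do not anticipate any real obstacle: the exponent $3/8$ is precisely the one that interpolates optimally between the diffusive time-regularity exponent $1/2$ and the lattice discretization error $\epsilon^{3/4}$ at the cross-over $t-s \sim \epsilon^2$.
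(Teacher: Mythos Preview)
Your proof is correct and follows essentially the same approach as the paper: both reduce to Lemma \ref{Holder moment estimate sobolev} via a three-term decomposition at the nearest grid points, combined with the affine-interpolation identity on single cells. The only cosmetic difference is the case split (you organize by $t-s \gtrless \epsilon^2$, the paper by whether $s,t$ share a cell) and your direct bound of the outer brackets by a full cell increment $\lesssim \epsilon^{3/4}$ rather than reapplying the same-cell estimate to get $(t-p_t)^{3/8}$; both routes lead to the same conclusion.
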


\begin{proof} First, consider the case where there exists $p \in \epsilon^2 \N$ such that $s,t \in [p,p+\epsilon^2]$. Then 
    \begin{align*}
        \hat{\bar{g}}^\epsilon_t(\zeta)-\hat{\bar{g}}^\epsilon_s(\zeta)&= c_{\zeta,\epsilon} \sum_{x \in \epsilon \N}{} e^{-i\zeta x} \left[ \bar{g}_t^\epsilon (x) - \bar{g}_s^\epsilon (x) \right] \\
        &= c_{\zeta,\epsilon} \sum_{x \in \epsilon \N}{} e^{-i\zeta x} \f{(t-s)}{\epsilon^2} \left[ g_{p+\epsilon^2}^\epsilon (x) - g_{p}^\epsilon (x) \right] \\
        &= \f{(t-s)}{\epsilon^2} \left[ \hat{g}_{p+\epsilon^2}^\epsilon (\zeta) - \hat{g}_{p}^\epsilon (\zeta) \right]
    \end{align*}
    Thanks to Lemma \ref{Holder moment estimate sobolev} we obtain
    \begin{align*}
        \norme{\hat{\bar{g}}^\epsilon_{t}(\zeta)-\hat{\bar{g}}^\epsilon_s(\zeta)}_{L^m(\Omega)} &\lesssim \f{(t-s)}{\epsilon^2} (\epsilon +\epsilon^{3/4}) \\
        &\lesssim (t-s)\epsilon^{-5/4} \\
        &\lesssim (t-s)^{3/8}
    \end{align*}
    the last line coming from the fact that $0 \leq t-s \leq \epsilon^2$.
    Second, consider the case where $s,t$ do not both belong to a same interval $[p,p+\epsilon^2]$ for some $p \in \epsilon \N$. Then let $p_t:= \lfloor t\epsilon^{-2} \rfloor \epsilon^2 $ and $p_s:= \lceil s\epsilon^{-2} \rceil \epsilon^2$. If $p_t>p_s$ then
    \begin{align*}
        \norme{\hat{\bar{g}}^\epsilon_t(\zeta)-\hat{\bar{g}}^\epsilon_s(\zeta)}_{L^m(\Omega)}&\leq \norme{\hat{\bar{g}}^\epsilon_t(\zeta)-\hat{\bar{g}}^\epsilon_{p_t}(\zeta)}_{L^m(\Omega)} + \norme{\hat{\bar{g}}^\epsilon_{p_t}(\zeta)-\hat{\bar{g}}^\epsilon_{p_s}(\zeta)}_{L^m(\Omega)} + \norme{\hat{\bar{g}}^\epsilon_{p_s}(\zeta)-\hat{\bar{g}}^\epsilon_s(\zeta)}_{L^m(\Omega)} \\
        &\lesssim c (t-p_t)^{3/8} + c \left[ (p_t-p_s)^{1/2} + \epsilon^{3/4} \right] + c (p_s -s)^{3/8} \\
        &\lesssim c (t-p_t)^{3/8} +  c \left[ (p_t-p_s)^{1/2} + (t-s)^{3/8} \right]+ c (p_s -s)^{3/8} \\
        &\lesssim c (t-s)^{3/8},
    \end{align*}
    the fourth line coming from the fact that $t-s \geq \epsilon^2$. If $p_t=p_s$, the same computation applies except that $\norme{\hat{\bar{g}}^\epsilon_{p_t}(\zeta)-\hat{\bar{g}}^\epsilon_{p_s}(\zeta)}_{L^m(\Omega)}$ vanishes.
\end{proof}

\begin{proof}[Proof of Proposition  \ref{estimation negative sobolev}]
   We have
    \begin{align*}
        \mathbb{E} \left[ \norme{\bar{g}^\epsilon_t-\bar{g}^\epsilon_s}^{2p}_{\mathcal{H}^{-s_0}} \right] 
        &= \mathbb{E} \left[ \Int{\R^p}{} \prod_{j=1}^{p} \left( 1+\abs{\zeta_j}^2 \right)^{-s_0} \prod_{j=1}^{p} \abs{\hat{\bar{g}}^\epsilon_t(\zeta_j) -\hat{\bar{g}}^\epsilon_s(\zeta_j)}^2 \: d\zeta_1 \cdots d\zeta_p \right] \\
        &=  \Int{\R^p}{} \prod_{j=1}^{p} \left( 1+\abs{\zeta_j}^2 \right)^{-s_0} \mathbb{E} \left[\prod_{j=1}^{p} \abs{\hat{\bar{g}}^\epsilon_t(\zeta_j) -\hat{\bar{g}}^\epsilon_s(\zeta_j)}^2 \right] \: d\zeta_1 \cdots d\zeta_p \ \\
        &\leq \Int{\R^p}{} \prod_{j=1}^{p} \left( 1+\abs{\zeta_j}^2 \right)^{-s_0} \prod_{j=1}^{p} \mathbb{E} \left[  \abs{\hat{\bar{g}}^\epsilon_t(\zeta_j) -\hat{\bar{g}}^\epsilon_s(\zeta_j)}^{2^{j+1}}  \right]^{\f{1}{2^j}} \: d\zeta_1 \cdots d\zeta_p \ \\
        &\leq \Int{\R^p}{} \prod_{j=1}^{p} \left( 1+\abs{\zeta_j}^2 \right)^{-s_0} \prod_{j=1}^{p} \left( c(2^{j+1}) (t-s)^{3/8} \right)^2 \: d\zeta_1 \cdots d\zeta_p \ \\
        &\leq (t-s)^{6p/8}  \prod_{j=1}^{p} c(2^{j+1})^2 \left[ \Int{- \infty}{\infty} \left( 1+ \abs{\zeta}^2 \right)^{-s_0} \: d\zeta \right]^{p} \\
        &\leq c (t-s)^{3p/4}
    \end{align*}
    where we used Cauchy-Schwarz inequality to obtain the third line and Lemma \ref{linearization in time} to obtain the fourth line.
\end{proof}

\subsection{Moment estimate for the \texorpdfstring{$C^b$}{Cb}-norm of the time increments}  For any $b>0$ let us introduce the Hölder space
\begin{equation}
    C^b:=\left\lbrace f \in L^\infty(\R) \:: \: \norme{f}_{L^\infty} + \displaystyle{\sup_{x \neq y}} \: \f{\abs{f(x)-f(y)}}{\abs{x-y}^{b}}=: \norme{f}_{C^b} < \infty \right\rbrace.
\end{equation}
The aim of this paragraph is to use an interpolation and embedding argument in order to deduce from the results from the two preceding paragraphs, a moment estimate on the Hölder norm of the time-increments. More precisely we prove the following
\begin{lemma} \label{lemme Cb}
    For any $b \in (0, \f{1}{2})$, there exists $\kappa >0$ such that for all $p >1$ there exists a constant $c>0$ such that
    \begin{equation} \label{estimate Cb norm}
        \forall s,t \in [0,T] \qquad \displaystyle{\sup_{\epsilon  \in (0,1]}} \: \mathbb{E} \left[ \norme{\bar{g}^{\epsilon}_t - \bar{g}^{\epsilon}_s }_{C^b}^{2p} \right] \leq c \abs{t-s}^{\kappa p}.
    \end{equation}
\end{lemma}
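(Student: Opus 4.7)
\emph{Proof proposal.} The plan is to interpolate the two previous estimates and conclude by a one-dimensional Sobolev-type embedding into $C^b$: Lemma \ref{estimation positive sobolev} provides a uniform control of $\bar g^\epsilon_t - \bar g^\epsilon_s$ in a positive-regularity space, while Proposition \ref{estimation negative sobolev} provides a quantitative time decay $(t-s)^{3/8}$ in a negative-regularity space. Interpolating between them with a small weight on the negative-regularity factor produces both the regularity needed for the H\"older embedding and a positive power of $(t-s)$.

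Concretely, I would identify $\mathcal{W}^{s_1, r}$ with the Besov space $B^{s_1}_{r, r}(\R)$ (valid for non-integer $s_1 \in (0,1)$) and $\mathcal{H}^{-s_0}$ with $B^{-s_0}_{2, 2}(\R)$. Complex interpolation for Besov spaces (see e.g.\ Bergh--L\"ofstr\"om or Triebel) gives, for each $\theta \in (0,1)$,
\[
\bigl[B^{-s_0}_{2,2},\,B^{s_1}_{r,r}\bigr]_\theta = B^{s_\theta}_{p_\theta,q_\theta},
\quad
s_\theta = \theta s_1 - (1-\theta) s_0,
\quad
\frac{1}{p_\theta}=\frac{\theta}{r}+\frac{1-\theta}{2}=\frac{1}{q_\theta},
\]
together with the interpolation inequality $\|f\|_{B^{s_\theta}_{p_\theta,q_\theta}} \le C\, \|f\|_{\mathcal{H}^{-s_0}}^{1-\theta}\,\|f\|_{\mathcal{W}^{s_1,r}}^{\theta}$. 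The one-dimensional Besov embedding $B^{s_\theta}_{p_\theta,q_\theta} \hookrightarrow C^b$ holds whenever $s_\theta - 1/p_\theta > b$, that is,
\[
\theta(s_1 - 1/r) - (1-\theta)(s_0 + 1/2) > b.
\]
For any $b \in (0, 1/2)$, pick $s_1 \in (0, 1/2)$ close enough to $1/2$ and $r$ large enough so that $s_1 - 1/r > b$, then $s_0 > 1/2$, and finally $\theta < 1$ sufficiently close to $1$; then the above inequality holds with $\theta$ strictly less than $1$.

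With such parameters fixed, applying the composite pointwise inequality to $f = \bar g^\epsilon_t - \bar g^\epsilon_s$, raising to the $2p$-th power, taking expectation and splitting by H\"older's inequality with conjugate exponents $1/\theta$ and $1/(1-\theta)$ gives
\[
\E\bigl[\|\bar g^\epsilon_t - \bar g^\epsilon_s\|_{C^b}^{2p}\bigr] \le C\,\E\bigl[\|\bar g^\epsilon_t - \bar g^\epsilon_s\|_{\mathcal{W}^{s_1,r}}^{2p}\bigr]^{\theta}\,\E\bigl[\|\bar g^\epsilon_t - \bar g^\epsilon_s\|_{\mathcal{H}^{-s_0}}^{2p}\bigr]^{1-\theta}.
\]
By Lemma \ref{estimation positive sobolev} the first factor is bounded uniformly in $\epsilon$ and in $(s,t) \in [0,T]^2$, and by Proposition \ref{estimation negative sobolev} the second is bounded by $C(t-s)^{3p(1-\theta)/4}$. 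This yields \eqref{estimate Cb norm} with $\kappa := 3(1-\theta)/4 > 0$, depending on $b$ through the parameter choice but not on $p$.

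The only real difficulty is the parameter bookkeeping: one must verify that the system of constraints admits a solution with $\theta$ \emph{strictly} less than $1$, since this strict inequality is precisely what produces a positive exponent of $(t-s)$ in the final estimate. Since for any $b < 1/2$ there is some slack between $s_1 - 1/r$ and $b$ once $s_1$ is taken close to $1/2$ and $r$ large, this is easily arranged. Once settled, the remainder of the argument is purely formal.
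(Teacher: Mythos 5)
Your proposal is correct and follows essentially the same route as the paper: interpolate between the positive-regularity bound of Lemma~\ref{estimation positive sobolev} and the negative-regularity bound of Proposition~\ref{estimation negative sobolev}, split the expectation by H\"older's inequality, and conclude via a Sobolev/Besov embedding into $C^b$. The only cosmetic difference is that you phrase the interpolation in the Besov scale $B^s_{p,q}$ while the paper stays in the Sobolev--Slobodeckij and Bessel-potential scales and cites a Triebel interpolation theorem directly; your parameter constraint $\theta(s_1-1/r)-(1-\theta)(s_0+\tfrac12)>b$ is an equivalent repackaging of the paper's conditions \eqref{choice of delta}--\eqref{allowing embedding}, and both yield $\kappa=3(1-\theta)/4$ depending on $b$ only.
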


\begin{proof}
By interpolation between Sobolev spaces \cite[p.182 Section 2.4.1 Theorem c)]{triebel1995interpolation}, given $s_0, s_1 \in \R$ and $r_1 \in (1, \infty)$, for all $\theta \in [0,1]$, there exists a constant $c_{\textrm{Interpo}} >0 $ such that
\begin{equation} \label{interpolation triebel}
    \norme{f}_{\mathcal{W}^{\delta,r}} \leq c_{\textrm{Interpo}} \norme{f}_{\mathcal{W}^{s_1,r_1}}^\theta \norme{f}_{\mathcal{H}^{-s_0}}^{1-\theta} \qquad \forall f \in \mathcal{W}^{s_1,r_1} \cap \mathcal{H}^{-s_0}
\end{equation}
where 
\begin{equation} \label{barycentre interpo}
    \left\lbrace
    \begin{array}{rl}
         \delta &:=(1-\theta) (- s_0) +\theta s_1  \\
         \f{1}{r}&:=\f{1-\theta}{2}+\f{\theta}{r_1} 
    \end{array}
    \right.
\end{equation} Our aim is now to fix all the parameters in such a way that $\mathcal{W}^{\delta,r}$ is continuously embedded in $C^b$, and such that the choice of $s_0$, $s_1$ and $r_1$ allows us to apply the results from Subsections \ref{subsection w s1 r} and \ref{subsection w -s_0}. 
Let $b \in (0,1/2)$. Fix $s_0> 1/2$ and $s_1 \in (b,1/2)$. Then let us take $\delta \in (b, s_1)$ close enough to $s_1$ so that
\begin{equation} \label{choice of delta}
    \delta - \f{s_1-\delta}{2(s_1+s_0)} >b.
\end{equation}
From \eqref{choice of delta}, we can take $r_1>1$ large enough so that 
\begin{equation} \label{allowing embedding}
    \delta - \f{s_1-\delta}{2(s_1+s_0)} - \f{1}{r_1} >b
\end{equation}
We can then set 
\begin{equation*}
    \theta := \f{s_0 + \delta}{s_0 + s_1} \in (0,1) \qquad \text{and} \qquad r:=\left( \f{1-\theta}{2}+\f{\theta}{r_1} \right)^{-1}
\end{equation*}
to obtain \eqref{barycentre interpo} with our choice of parameters.
So by interpolation we obtain \eqref{interpolation triebel} for our choice of parameters $s_0, \delta, s_1, r$ and $r_1$. Using Hölder's inequality, we obtain
\begin{equation*}
    \mathbb{E} \left[ \norme{ \bar{g}^\epsilon_t -  \bar{g}^\epsilon_s }_{\mathcal{W}^{\delta,r}}^p \right] 
    \leq c_{\textrm{Interpo}}^p
    \mathbb{E} \left[ \norme{ \bar{g}^\epsilon_t -  \bar{g}^\epsilon_s }_{\mathcal{W}^{s_1,r_1}}^p \right]^{\theta}
    \mathbb{E} \left[ \norme{ \bar{g}^\epsilon_t -  \bar{g}^\epsilon_s }_{\mathcal{H}^{-s_0}}^p \right]^{1-\theta}
\end{equation*}
 Additionally \eqref{allowing embedding} ensures that
\begin{equation}
    \delta - \f{1}{r} = \delta - \f{s_1-\delta}{2(s_1+s_0)} - \f{\theta}{r_1} >b.
\end{equation} 
Consequently, we have the continuous embedding $\mathcal{W}^{\delta,r} \hookrightarrow C^b$. Now together with Lemma \ref{estimation positive sobolev} and Proposition \ref{estimation negative sobolev}, this concludes the proof.
\end{proof}

\subsection{Estimation of the interpolation error}
The aim of this paragraph is to control the error from the linear time interpolation. More precisely we prove the following
\begin{lemma} \label{estimation of the linearization error}
    For all $p \geq 1$, we have
    \begin{equation}
        \displaystyle{\lim_{\epsilon \to 0} \: \mathbb{E} \left[ \displaystyle{\sup_{t \in [0,T]}} \norme{\bar{g}^\epsilon_t - g^\epsilon_t}_{\infty}^p \right]} =0
    \end{equation}
\end{lemma}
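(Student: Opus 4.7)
My plan is to exploit the fact that $\bar h^\epsilon$ and $h^\epsilon$ coincide at the rescaled grid times $t \in \epsilon^2 \N$, so that their pointwise difference at other times is controlled by the number of corner flips of $h$ in an adjacent microscopic unit interval.

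\textbf{Step 1: pointwise jump bound.} Every corner flip at site $k \in \N^*$ changes $h(k)$ by $\pm 2$ and occurs at a ring of the Poisson clock $N(k)$. Hence, for $x \in \epsilon \N$, writing $\tau := \epsilon^{-2} t$ and $n := \lfloor \tau \rfloor$, we have $|h_\tau(\epsilon^{-1}x) - h_n(\epsilon^{-1}x)| \le 2 J_{\epsilon^{-1}x}(n)$ with $J_k(n) := N_{n+1}(k) - N_n(k) \sim \mathrm{Poisson}(1)$ (independent over $(k,n)$), and the same bound with $h_{n+1}$ in place of $h_n$. Applying the triangle inequality to the linear time interpolation and then rescaling yields
\begin{equation*}
|\bar h^\epsilon_t(x) - h^\epsilon_t(x)| \le 2 \sqrt\epsilon\, J_{\epsilon^{-1}x}(n), \qquad x \in \epsilon \N.
\end{equation*}
Since $h^\epsilon$ and $\bar h^\epsilon$ are extended to $[0,\infty)$ by linear spatial interpolation, this estimate extends off-grid up to an absolute constant after multiplication by $e^{-\rho x}$. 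Therefore, there exists $C > 0$ such that
\begin{equation*}
\sup_{t \in [0,T]} \|\bar g^\epsilon_t - g^\epsilon_t\|_\infty \le C \sqrt\epsilon\, Z^\epsilon, \qquad Z^\epsilon := \sup_{k \in \N}\, \sup_{0 \le n \le N_\epsilon} e^{-\rho k \epsilon} J_k(n),
\end{equation*}
where $N_\epsilon := \lceil \epsilon^{-2} T \rceil$.

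\textbf{Step 2: moment bound for $Z^\epsilon$.} The strategy is to prove $\|Z^\epsilon\|_{L^p(\Omega)} \lesssim |\log \epsilon|$ for every $p \ge 1$, which immediately gives $\mathbb{E}[\sup_t \|\bar g^\epsilon_t - g^\epsilon_t\|_\infty^p] \lesssim \epsilon^{p/2} |\log \epsilon|^p \to 0$. I split at the cutoff $M_\epsilon := \lceil |\log \epsilon|/(\rho \epsilon) \rceil$ and write
\begin{equation*}
Z^\epsilon \le \sup_{k \le M_\epsilon,\, n \le N_\epsilon} J_k(n) + \sum_{k > M_\epsilon} e^{-\rho k \epsilon} \sup_{n \le N_\epsilon} J_k(n).
\end{equation*}
For the bulk term, the Chernoff bound $P(\mathrm{Poisson}(1) > t) \le e^{t - 1 - t \log t}$ implies that the maximum of $(M_\epsilon+1)(N_\epsilon+1)$ i.i.d.\ Poisson$(1)$ variables has $L^p$-norm $\lesssim \log(M_\epsilon N_\epsilon) \lesssim |\log \epsilon|$. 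For the tail term, Minkowski's inequality combined with the uniform bound $\|\sup_n J_k(n)\|_{L^p} \lesssim |\log \epsilon|$ gives a contribution of order $\frac{e^{-\rho M_\epsilon \epsilon}}{\rho \epsilon} |\log \epsilon|$, which is $\lesssim |\log \epsilon|$ since the choice of $M_\epsilon$ ensures $e^{-\rho M_\epsilon \epsilon} \le \epsilon$.

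\textbf{Main obstacle.} The delicate point is the calibration of the cutoff $M_\epsilon$. Applying $\sup_k \le \sum_k$ globally leads to $\|Z^\epsilon\|_{L^p} \lesssim |\log \epsilon|/\epsilon$, which is far too large to beat the $\epsilon^{p/2}$ prefactor when $p \le 2$. The split above trades the factor $1/\epsilon$ coming from summation against the logarithmic growth of the maximum of polynomially many Poisson variables, by choosing $M_\epsilon$ so that both contributions match at order $|\log \epsilon|$. Everything else (Chernoff for Poisson maxima, Minkowski for the tail sum, and the $\pm 2$ jump structure of $h$) is standard.
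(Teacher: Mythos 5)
Your proof is correct but uses a genuinely different — and quantitatively sharper — second step than the paper. The paper also reduces to the space–time boxes $B_{k,i}=[i\epsilon^2,(i+1)\epsilon^2]\times[k\epsilon,(k+1)\epsilon]$ and bounds the oscillation of $\bar h^\epsilon - h^\epsilon$ on $B_{k,i}$ by $C\sqrt{\epsilon}\,(1+J_k(i))$ with $J_k(i)\sim\mathrm{Poisson}(1)$, exactly as in your Step~1. But where you go on to control the $L^p$-norm of the weighted supremum $Z^\epsilon$, the paper simply replaces the supremum by a sum of box-expectations, $\mathbb{E}\left[\sup_{t,x}|\cdot|^p\right]\le\sum_{k,i}e^{-\rho k\epsilon p}\,\mathbb{E}\left[\sup_{B_{k,i}}|\cdot|^p\right]\lesssim\epsilon^{-2}\cdot\epsilon^{p/2}$, which vanishes only for $p$ large enough; it then concludes for every $p\ge 1$ by monotonicity of $L^p$-norms. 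Your cutoff-plus-Chernoff analysis — splitting at $M_\epsilon\sim|\log\epsilon|/\epsilon$, bounding the bulk maximum of roughly $\epsilon^{-3}|\log\epsilon|$ Poisson variables by Chernoff, and controlling the geometric tail via Minkowski — buys the stronger estimate $\|Z^\epsilon\|_{L^p}\lesssim|\log\epsilon|$, hence $\epsilon^{p/2}|\log\epsilon|^p$ directly for all $p\ge 1$ with no lifting step. The paper's route is shorter and needs nothing beyond a Poisson moment bound; yours identifies the correct (logarithmic) order of the weighted Poisson supremum and is self-contained for every $p$.
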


\begin{proof}
    Take $p\geq 1$. For $i,k \in \N$, let us denote $B_{k,i}:=[i\epsilon^2, (i+1)\epsilon^2]\times [k\epsilon,(k+1)\epsilon] $. We have
    \begin{align} \label{découpage en sup sur les boites}
        \mathbb{E} \left[ \displaystyle{\sup_{\substack{t \in [0,T] \\ x \in [0,\infty)}}} \abs{\bar{g}^\epsilon_t(x)-g^{\epsilon}_t(x)}^p \right]
        &=\mathbb{E} \left[ \displaystyle{\sup_{\substack{t \in [0,T] \\ x \in [0,\infty)}}} e^{-\rho xp} \abs{\bar{h}^\epsilon_t(x)-h^{\epsilon}_t(x)}^p \right] \nonumber \\
        &\leq \sum_{i=0}^{\lfloor \epsilon^{-2} T \rfloor} \sum_{k=0}^{\infty} \mathbb{E} \left[ \displaystyle{\sup_{\substack{(t,x) \in B_{k,i}}}} e^{-\rho xp} \abs{\bar{h}^\epsilon_t(x)-h^{\epsilon}_t(x)}^p \right] \nonumber \\
        &\leq \sum_{i=0}^{\lfloor \epsilon^{-2} T \rfloor} \sum_{k=0}^{\infty} e^{-\rho kp}  \mathbb{E} \left[ \displaystyle{\sup_{\substack{(t,x) \in B_{k,i} }}}\abs{\bar{h}^\epsilon_t(x)-h^{\epsilon}_t(x)}^p \right]
    \end{align}
    Let us bound the expectation term on the right hand side. For any $(t,x) \in B_{k,i}$
    \begin{align*} 
    \abs{\bar{h}^\epsilon_t(x)-h^\epsilon_t(x)} &\leq \abs{\bar{h}^\epsilon_t(x)-\bar{h}^\epsilon_t(k \epsilon)} 
    + \abs{\bar{h}^\epsilon_t(k \epsilon)-h^\epsilon_t(k \epsilon)}
    + \abs{h^\epsilon_t(k \epsilon)-h^\epsilon_t(x)} \\
    &\leq 2\sqrt{\epsilon} + \abs{h_{(i+1)\epsilon^2}^\epsilon(k\epsilon) -h_{i\epsilon^2}^\epsilon(k\epsilon)} + 2 \sqrt{\epsilon} \\
    &\leq 4\sqrt{\epsilon} + \sqrt{\epsilon} \left( N_{i+1}(k)-N_i(k) \right)
    \end{align*}
    Since $N_{i+1}(k)-N_i(k) \sim \mathcal{P}(1)$, we deduce that
    \begin{equation} \label{estimate sup on space time box}
        \mathbb{E} \left[ \displaystyle{\sup_{\substack{(t,x) \in B_{k,i} }}}\abs{\bar{h}^\epsilon_t(x)-h^{\epsilon}_t(x)}^p \right] \lesssim \epsilon^{p/2}
    \end{equation}
    where the constant involved in $\lesssim$ only depends on $p$, in particular it is uniform in $k, i $ and $\epsilon \in (0,1]$. Combining \eqref{découpage en sup sur les boites} and \eqref{estimate sup on space time box} yields
    \begin{equation*}
         \mathbb{E} \left[ \displaystyle{\sup_{\substack{t \in [0,T] \\ x \in [0,\infty)}}} \abs{\bar{g}^\epsilon_t(x)-g^{\epsilon}_t(x)}^p \right] \lesssim \lfloor \epsilon^{-2} T \rfloor \epsilon^{p/2}
    \end{equation*}
    where the constant involved in $\lesssim$ only depends on $p,\rho$ and $T$, in particular it is uniform in $\epsilon \in (0,1]$. The result follows for $p>4$, and then the result for all $p \geq 1$ is a direct consequence.
\end{proof}

\subsection{Proof of Theorem \ref{tightness of h}}

Recall that $C([0,\infty))$ denotes the set of all continuous functions on $[0,\infty)$ endowed with the topology of uniform convergence. We rely on the following tightness criterion (see for instance \cite[Section VII, Theorem 23.9]{kallenberg1997foundations})

\begin{proposition}(Tightness criterion){\large \textbf{.}} \label{tightness criterion} Let $(g^\epsilon)_{\epsilon \in (0,1]}$ be a family of $D([0,\infty),C([0,\infty)))$-valued random variables. Assume that
    \begin{enumerate}[(i)]
        \item For all $t \geq 0$, the family of $C([0,\infty))$ valued random variables $(g^\epsilon_t)_{\epsilon \in (0,1]}$ is tight.
        \item \label{second point tightness} $\forall T \geq 0 \qquad  \displaystyle{\lim_{\delta \to 0}} \: \displaystyle{\uplim_{\epsilon \to 0}} \; \mathbb{E} \left[ \displaystyle{\sup_{\substack{s,t \in [0,T] \\ \abs{s-t}<\delta}}} \norme{g^{\epsilon}_{t}-g^{\epsilon}_s}_{\infty} \right] =0$
    \end{enumerate}
    then $(g^\epsilon)_{\epsilon \in (0,1]}$ is tight in $D([0,T],C([0,\infty)))$, and any limit point belongs to $C([0,T],C([0,\infty)))$.
\end{proposition}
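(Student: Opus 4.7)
\emph{Strategy.} The plan is to derive the conclusion from a pathwise Arzelà--Ascoli argument in $C([0,\infty))$ combined with a standard modulus-of-continuity criterion in the Skorokhod space. Recall that compact subsets of $C([0,\infty))$ (endowed with the local-uniform topology) are characterised via Arzelà--Ascoli by pointwise boundedness together with equicontinuity on every bounded interval $[0,A]$. First, I would establish compact containment: for every $\eta>0$, exhibit a compact $\mathcal{K}\subset C([0,\infty))$ such that $P(g^\epsilon_t\in\mathcal{K}\text{ for all }t\in[0,T])\ge 1-\eta$ uniformly in $\epsilon\in(0,1]$; second, I would use (ii) itself as the modulus-of-continuity condition that, combined with compact containment, implies tightness in $D([0,T],C([0,\infty)))$ with continuous subsequential limits. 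Tightness on $[0,\infty)$ then follows from tightness on each $[0,T]$ by metrizability of $D([0,\infty),E)$.

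\emph{Compact containment.} By (i) applied at $t=0$ and Prokhorov's theorem, fix a compact $\mathcal{K}_0\subset C([0,\infty))$ with $P(g^\epsilon_0\in\mathcal{K}_0)\ge 1-\eta/2$ uniformly in $\epsilon$; elements of $\mathcal{K}_0$ share a common modulus of continuity $\omega_0(\delta)\to 0$ and a uniform bound on each $[0,A]$. From (ii) and Markov's inequality, for any $\kappa>0$ there exist $\delta_0>0$ and $\epsilon_0>0$ such that, for all $\epsilon<\epsilon_0$,
\[
P\Bigl(\sup_{\substack{s,t\in[0,T]\\|s-t|<\delta_0}}\|g^\epsilon_t-g^\epsilon_s\|_\infty\le\kappa\Bigr)\ge 1-\eta/2.
\]
On the intersection of these two events, telescoping along a time grid of mesh $\delta_0$ yields $\sup_{t\in[0,T]}\|g^\epsilon_t-g^\epsilon_0\|_\infty\le(T/\delta_0+1)\kappa$, and the triangle inequality $|g^\epsilon_t(x)-g^\epsilon_t(y)|\le|g^\epsilon_0(x)-g^\epsilon_0(y)|+2\sup_{s\in[0,T]}\|g^\epsilon_s-g^\epsilon_0\|_\infty$ produces for every $t\in[0,T]$ a uniform modulus of continuity $\omega_0(\delta)+2(T/\delta_0+1)\kappa$ together with a uniform pointwise bound on each $[0,A]$. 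Taking $\kappa$ arbitrarily small, Arzelà--Ascoli supplies a compact $\mathcal{K}\subset C([0,\infty))$ containing the entire trajectory with probability at least $1-\eta$.

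\emph{Tightness, continuity of limits, and main obstacle.} The compact containment combined with the oscillation bound of (ii) fulfils the classical tightness criterion in $D([0,T],E)$ for a Polish space $E$, giving tightness in $D([0,T],C([0,\infty)))$. Moreover, since $\sup_{|s-t|<\delta}\|g^\epsilon_t-g^\epsilon_s\|_\infty\to 0$ in probability as $\delta\to 0$ uniformly in small $\epsilon$, any subsequential weak limit has almost surely no jumps and therefore belongs to $C([0,T],C([0,\infty)))$. The delicate step is the compact containment: assumption (i) only provides a compact time by time, with a priori distinct moduli of continuity depending on $t$, and the key role of (ii) is to furnish the uniform-in-time $L^\infty$-oscillation control that transfers the spatial equicontinuity established at $t=0$ to every later time via the triangle inequality.
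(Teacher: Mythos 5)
The paper offers no proof of this proposition; it is quoted from Kallenberg's textbook (the reference given right before the statement). Your attempt, therefore, is an original argument, but it contains a genuine gap in the compact-containment step. You apply (i) only at $t=0$ and try to propagate equicontinuity forward via $|g^\epsilon_t(x)-g^\epsilon_t(y)|\le|g^\epsilon_0(x)-g^\epsilon_0(y)|+2\sup_{s\le T}\|g^\epsilon_s-g^\epsilon_0\|_\infty$. The resulting bound $\omega_0(\delta)+2(T/\delta_0+1)\kappa$ does \emph{not} tend to zero as $\delta\to 0$ for a fixed $\kappa>0$, so it is not an equicontinuity modulus, and Arzel\`a--Ascoli does not apply. ``Taking $\kappa$ arbitrarily small'' does not repair this: $\delta_0$ and $\epsilon_0$ both depend on $\kappa$, and you never produce a single deterministic compact set. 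In fact, hypotheses (i) at a single time and (ii) are jointly \emph{insufficient}: take $g^\epsilon_t(x)=t\,\sigma(x/\epsilon)$ with $\sigma$ bounded, Lipschitz and $1$-periodic; then $g^\epsilon_0\equiv 0$, and $\sup_{|s-t|<\delta}\|g^\epsilon_t-g^\epsilon_s\|_\infty\le\delta\|\sigma\|_\infty$, so your two ingredients hold, yet $(g^\epsilon_T)_{\epsilon}$ is not tight in $C([0,\infty))$ for any $T>0$ because the spatial oscillation blows up. The root cause is that $\sup_s\|g^\epsilon_s-g^\epsilon_0\|_\infty$ is only bounded, not small, so equicontinuity at time $0$ tells you nothing about equicontinuity at later times.

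The correct use of the hypotheses -- and the one underlying the cited theorem -- exploits (i) at a \emph{dense set of times}, not just $t=0$. Roughly: for each $j$, choose a time grid of mesh $\delta_j$ from (ii) so that the oscillation between grid points is $\le 2^{-j}$ with high probability, use (i) at each grid point to find a compact $K_j\subset C([0,\infty))$ containing all $g^\epsilon_{t_i}$ with high probability, and conclude that the whole trajectory lies in the closed $2^{-j}$-neighbourhood of $K_j$; the needed compact is then $\bigcap_j \overline{(K_j)_{2^{-j}}}$, which is closed and totally bounded. One should also address the fact that (ii) is a $\limsup_{\epsilon\to 0}$ statement (so each oscillation bound only holds for $\epsilon$ below a $j$-dependent threshold); this is resolved by the standard remark that tightness of a sequence in a Polish space reduces to eventual tightness, since each individual law is automatically tight.
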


\noindent First, let us check that $(i)$ is satisfied. Because $h_0$ has law $\pi$, for all $t \geq 0$ we have $(g^\epsilon_t)_{\epsilon \in (0,1]} \overset{\mathcal{L}}{=} (g^\epsilon_0)_{\epsilon \in (0,1]}$. Now tightness is a direct consequence of the invariance principle for random walk conditioned to remain non-negative, see \cite[Theorem 2.1]{bryndoney}. Second, let us check that $(ii)$ is satisfied. To do so, we need to enhance our previous moment estimates to obtain uniformity in time. This can be achieved thanks to Kolmogorov's continuity lemma as stated in  \cite[Theorem I, 2.1]{revuz2013continuous}. Indeed, Kolmogorov's continuity lemma together with the estimate \eqref{estimate Cb norm} show that if we fix $b \in (0,\f{1}{2})$  and let $\theta \in (0,1)$ be the associated interpolation parameter given by Lemma \ref{lemme Cb}, then if we take $p$ large enough such that $\f{\kappa}{2}-\f{1}{2p}>0$, then for all $\alpha \in (0, \f{\kappa}{2}-\f{1}{2p})$ there exists a constant $c>0$ such that
\begin{equation} \label{estimate after kolmogorov}
    \displaystyle{\sup_{\epsilon \in (0,1]} \mathbb{E}} \left[ \left( \displaystyle{\sup_{\substack{t \neq s \\ s,t \in [0,T]}}} \f{ \norme{\bar{g}^\epsilon_t - \bar{g}^\epsilon_s}_{C^b} }{\abs{t-s}^\alpha} \right)^p \right] \leq c
\end{equation}
Now we can prove that hypothesis \eqref{second point tightness} of Proposition \ref{tightness criterion} is satisfied. We have for all $\delta, \epsilon \in (0,1]$
\begin{equation}\label{Eq:Holder}\begin{split}
    \mathbb{E} \left[ \displaystyle{\sup_{\substack{\abs{t-s}<\delta \\ s,t \in [0,T]}}} \norme{g^{\epsilon}_t-g^{\epsilon}_s}_{\infty} \right]
    &\leq \mathbb{E} \left[ \left( \displaystyle{\sup_{\substack{\abs{t-s}<\delta \\ s,t \in [0,T]}}} \norme{g^{\epsilon}_t-g^{\epsilon}_s}_{\infty} \right)^p \right]^{\f{1}{p}} \\
    &\leq 2  \mathbb{E} \left[ \displaystyle{\sup_{t \in [0,T]}} \norme{\bar{g}^\epsilon_t - g^\epsilon_t}_{\infty}^p \right]^{\f{1}{p}} +
    \mathbb{E} \left[ \left( \displaystyle{\sup_{\substack{t \neq s \\ s,t \in [0,T]}}} \; \f{ \norme{\bar{g}^\epsilon_t - \bar{g}^\epsilon_s}_{C^b} }{\abs{t-s}^{\alpha}} \right)^p \right]^{\f{1}{p}} \delta^\alpha
\end{split}
\end{equation}
now inequality \eqref{estimate after kolmogorov} together with Lemma \ref{estimation of the linearization error} conclude the proof of \eqref{second point tightness}.

\section{Convergence of \texorpdfstring{$(W^\epsilon)_{\epsilon \in (0,1]}$}{xi} to a cylindrical Wiener process}

\noindent For $\epsilon \in (0,1]$, recall that $(W^\epsilon_t)_{t \geq 0}$ is an $\mathcal{S}'([0,\infty))$-valued random process defined by
\begin{align}
    W_t^\epsilon(\varphi)&:= \Int{0}{t} \varphi(x) W_t^\epsilon (dx) \label{definition martingale} \\
    &=\f{\epsilon}{\sqrt{2}} \sum_{x \in \epsilon \N^*}{} \Int{0}{t} \Delta^\epsilon h^\epsilon_s(x) 1 \left\lbrace h_s^\epsilon (x) +\Delta^\epsilon h^\epsilon_s (x) \geq 0 \right\rbrace \varphi (x) \, dM_s^\epsilon(x) \nonumber
\end{align}
for all $t \ge 0$ and $\varphi \in \mathcal{S}([0,\infty))$. As defined, $W^\epsilon$ is a $D([0,\infty),\mathcal{S}'([0,\infty))$-valued random variable.
The main result of this section is the following
\begin{theorem} \label{convergence of xi}
The following convergence holds
\begin{equation}
    W^\epsilon \xrightarrow[\epsilon \to 0]{\mathcal{L}} W
\end{equation}
as $D([0,\infty),\mathcal{S}'([0,\infty))$-valued random variables, where $W$ is a cylindrical Wiener process.
\end{theorem}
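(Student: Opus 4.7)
The overall plan is twofold: first establish tightness of $(W^\epsilon)_{\epsilon \in (0,1]}$ in $D([0,\infty),\mathcal{S}'([0,\infty)))$ via Mitoma's theorem, which reduces the problem to real-valued tightness of $(W^\epsilon(\varphi))_\epsilon$ for each fixed Schwartz test function $\varphi \in \mathcal{S}([0,\infty))$; and second identify any limit point through a functional martingale central limit theorem of Rebolledo type. The key observation enabling this approach is that each $W^\epsilon(\varphi)$ is a pure-jump square-integrable martingale whose jumps are bounded by $\sqrt{2}\,\epsilon^{3/2}\|\varphi\|_\infty$ and hence vanish uniformly as $\epsilon \to 0$.

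Using the identity $(\Delta^\epsilon h^\epsilon)^2 = 4\epsilon\,\mathbf{1}\{\Delta^\epsilon h^\epsilon \neq 0\}$ together with $d\langle M^\epsilon(x)\rangle_s = \epsilon^{-2}ds$, the predictable bracket of $W^\epsilon(\varphi)$ reads
\begin{equation*}
\langle\!\langle W^\epsilon(\varphi)\rangle\!\rangle_t = 2\epsilon \sum_{x \in \epsilon\N^*}\int_0^t \mathbf{1}\{\Delta^\epsilon h_s^\epsilon(x)\neq 0\}\,\mathbf{1}\{h_s^\epsilon(x)+\Delta^\epsilon h_s^\epsilon(x)\geq 0\}\,\varphi(x)^2\, ds.
\end{equation*}
A crude uniform bound $\langle\!\langle W^\epsilon(\varphi)\rangle\!\rangle_t \leq C_\varphi t$ combined with the vanishing-jump control yields tightness via the Aldous--Rebolledo criterion and forces any limit point to be continuous. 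To identify the limit, I would decompose the constraint indicator as $1 - \mathbf{1}\{h^\epsilon_s(x)+\Delta^\epsilon h^\epsilon_s(x) < 0\}$: the error term is supported on $\{h^\epsilon_s(x)=0\}$, and by stationarity its contribution is bounded by $2t\|\varphi\|_\infty^2\,\mathbb{E}\bigl[\epsilon\sum_x \mathbf{1}\{h^\epsilon_0(x)=0\}\varphi(x)^2\bigr]$, which vanishes by Lemma \ref{lemme 1 annexe} (transience) applied to $\varphi^2$. The main term, again using stationarity, reduces to $2t\cdot\mathbb{E}[\epsilon\sum_x \mathbf{1}\{\Delta^\epsilon h^\epsilon_0(x)\neq 0\}\varphi(x)^2]$, and Lemma \ref{lemme 2 annexe} applied to $\varphi^2$ gives $\mathbb{E}\bigl[\,|\langle\!\langle W^\epsilon(\varphi)\rangle\!\rangle_t - t\|\varphi\|_{L^2}^2|\,\bigr] \to 0$.

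Rebolledo's theorem (see Jacod--Shiryaev Ch.~VIII) then implies that $W^\epsilon(\varphi)$ converges in law to $\|\varphi\|_{L^2} B^\varphi$ in $D([0,\infty),\R)$ for a standard Brownian motion $B^\varphi$. Applying the same argument to arbitrary linear combinations $\sum_i \lambda_i W^\epsilon(\varphi_i)$---equivalently, computing cross-brackets and using the polarization identity together with Lemma \ref{lemme 2 annexe} to obtain $\langle\!\langle W^\epsilon(\varphi),W^\epsilon(\psi)\rangle\!\rangle_t \to t\langle\varphi,\psi\rangle_{L^2}$---shows that every finite family $(W^\epsilon(\varphi_i))_i$ converges jointly to a centered Gaussian process with covariance $t\langle\varphi_i,\varphi_j\rangle_{L^2}$. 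This is precisely the characterization of a cylindrical Wiener process, so the full sequence converges. The main technical obstacle is the uniform-in-$\epsilon$ control of the spatial tail: since the sum defining $W^\epsilon(\varphi)$ ranges over an infinite half-line, both the crude bracket bound and the $L^1$-convergence above require combining the Schwartz decay of $\varphi$ with polynomial moment estimates for the conditioned random walk (in the spirit of Lemma \ref{lemma moment estimate on the increments SSRW conditioned}) in order to dominate contributions from large $x$.
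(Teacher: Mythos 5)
Your proposal follows essentially the same route as the paper: compute the predictable bracket of $W^\epsilon(\varphi)$, split the constraint indicator into $1 - \mathbf{1}\{\cdot<0\}$, kill the defect term by transience (Lemma~\ref{lemme 1 annexe}), identify the main term via the corner-density Lemma~\ref{lemme 2 annexe}, conclude $D([0,\infty),\R)$-convergence of $W^\epsilon(\varphi)$ for each $\varphi$ by the Jacod--Shiryaev/Rebolledo martingale CLT, and lift to $\mathcal{S}'$-valued convergence through Mitoma's criterion. The one place where you are actually \emph{more} explicit than the paper is the identification of the limit: you spell out polarization and joint convergence of $(W^\epsilon(\varphi_i))_i$, whereas the paper relies on the automatic linearity of $\varphi\mapsto X(\varphi)$ for a limit living in $D([0,\infty),\mathcal{S}'([0,\infty)))$ and is terser; both are fine.

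Two small inaccuracies worth flagging, neither of which is a genuine gap. First, you say the defect indicator $\mathbf{1}\{h^\epsilon_s(x)+\Delta^\epsilon h^\epsilon_s(x)<0\}$ is ``supported on $\{h^\epsilon_s(x)=0\}$'': since the Laplacian takes values in $\{-2\sqrt\epsilon,0,2\sqrt\epsilon\}$ and $h^\epsilon\ge 0$, this event is in fact $\{h^\epsilon_s(x)=\sqrt\epsilon,\;\Delta^\epsilon h^\epsilon_s(x)=-2\sqrt\epsilon\}$ (unscaled: $h=1$ sitting between two zeros), which is disjoint from $\{h^\epsilon_s(x)=0\}$. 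The bound still works because $\{X_n+\Delta X_n<0\}\subseteq\{X_n\le 1\}$ and Lemma~\ref{lemme 1 annexe} holds for each fixed level $k$, so one applies it at $k=0,1$. Second, the closing remark about needing polynomial moment estimates for the conditioned walk is misplaced for \emph{this} theorem: the bracket involves only bounded indicator summands weighted by $\varphi^2$, and the Schwartz decay built into Lemmas~\ref{lemme 1 annexe}--\ref{lemme 2 annexe} already handles the spatial tail. The moment estimates of Lemma~\ref{lemma moment estimate on the increments SSRW conditioned} are the workhorse for the tightness of $(h^\epsilon)$, not of $(W^\epsilon)$.
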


\noindent To do so, we rely on the martingale structure of the dynamic. 

\subsection{Convergence of the bracket process}
In  this paragraph we prove the following result
\begin{proposition} \label{functional convergence of the bracket process}
    For any $\varphi \in \mathcal{S}([0,\infty))$, the following convergence holds in $D([0, \infty), \R)$
    \begin{equation} \label{functional cv of the bracket process}
        \langle\!\langle W^\epsilon(\varphi), W^\epsilon(\varphi) \rangle\!\rangle \xrightarrow[\epsilon \to 0]{\mathcal{L}} C
    \end{equation}
    where $C$ is the deterministic process defined by $C_t:= t \norme{\varphi}_{L^2([0,\infty))}$.
\end{proposition}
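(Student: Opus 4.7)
The plan is to work directly with the explicit formula for the bracket, establish convergence of its expectation via the two preliminary lemmas on the stationary random walk, upgrade this to convergence in probability at each fixed $t$ via a variance estimate, and finally obtain the functional convergence by using monotonicity of the bracket.

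\textbf{Step 1 (explicit formula).} By independence of the Poisson processes $(N^\epsilon_\cdot(x))_{x\in\epsilon\N^*}$ and the identity $(\Delta^\epsilon h^\epsilon_s(x))^2 = 4\epsilon\,\mathbf{1}\{\Delta^\epsilon h^\epsilon_s(x)\neq 0\}$ (since $\Delta^\epsilon h^\epsilon_s(x)\in\{-2\sqrt{\epsilon},0,2\sqrt{\epsilon}\}$), and recalling that $\langle M^\epsilon(x)\rangle_s = \epsilon^{-2}s$, a direct computation gives
\begin{equation*}
    \langle\!\langle W^\epsilon(\varphi)\rangle\!\rangle_t = 2\epsilon \sum_{x \in \epsilon \N^*} \varphi(x)^2 \int_0^t \mathbf{1}\{\Delta^\epsilon h^\epsilon_s(x)\neq 0\}\,\mathbf{1}\{h^\epsilon_s(x)+\Delta^\epsilon h^\epsilon_s(x)\geq 0\}\,ds.
\end{equation*}

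\textbf{Step 2 (mean convergence).} Taking expectations and using Fubini together with the fact that $h^\epsilon$ starts from stationarity, one gets $\E[\langle\!\langle W^\epsilon(\varphi)\rangle\!\rangle_t] = 2\epsilon t \sum_{x\in\epsilon\N^*}\varphi(x)^2\,\pi^\epsilon(E(x))$, where $E(x)=\{\Delta^\epsilon h(x)\neq 0,\;h(x)+\Delta^\epsilon h(x)\geq 0\}$. On the underlying random walk, the only configuration for which a corner is blocked by the wall is $\{X_n=1,\,X_{n-1}=X_{n+1}=0\}$, so
\begin{equation*}
    \pi(E(\epsilon^{-1}x)) \;=\; \pi(\Delta X_{\epsilon^{-1}x}\neq 0) \;-\; \pi\bigl(X_{\epsilon^{-1}x}=1,\,X_{\epsilon^{-1}x\pm 1}=0\bigr).
\end{equation*}
Applying Lemma \ref{lemme 2 annexe} with the Schwartz function $\varphi^2$ shows that $\epsilon\sum_x \varphi(x)^2\,\pi(\Delta X_{\epsilon^{-1}x}\neq 0)\to \tfrac12 \int_0^\infty\varphi(x)^2 dx$, while Lemma \ref{lemme 1 annexe} (applied with $k=1$ and a trivial bound on the joint event by $\mathbf{1}\{X_n=1\}$) shows that the reflection correction is negligible. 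Consequently $\E[\langle\!\langle W^\epsilon(\varphi)\rangle\!\rangle_t] \to t\,\|\varphi\|_{L^2([0,\infty))}^2$.

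\textbf{Step 3 (variance bound).} This is the main obstacle. To upgrade the mean convergence to convergence in probability at each fixed $t$, I would bound the variance
\begin{equation*}
    \mathrm{Var}\bigl(\langle\!\langle W^\epsilon(\varphi)\rangle\!\rangle_t\bigr) = 4\epsilon^2\sum_{x,y}\varphi(x)^2\varphi(y)^2\int_0^t\!\!\int_0^t \mathrm{Cov}\bigl(Y_s(x),Y_{s'}(y)\bigr)\,ds\,ds',
\end{equation*}
with $Y_s(x)$ the product of indicators in Step 1. Switching back to the unscaled time $u=\epsilon^{-2}s$ turns each spatial summand into an ergodic time-average over $[0,\epsilon^{-2}t]$ of a reversible stationary Markov chain, so the natural tool is a Kipnis-Varadhan-type estimate exploiting the reversibility of $\pi$ together with the locality of the generator (interaction strictly nearest-neighbor) to bound the covariance by a quantity that decays with both spatial separation $|x-y|$ and temporal lag $|s-s'|$. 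After summation, the prefactor $\epsilon^2$ combined with the bound produces a vanishing variance as $\epsilon\to 0$.

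\textbf{Step 4 (functional convergence).} Once pointwise convergence in probability is established, I would upgrade to functional convergence in $D([0,\infty),\R)$ for free. Indeed $t\mapsto \langle\!\langle W^\epsilon(\varphi)\rangle\!\rangle_t$ is non-decreasing and the limit $t\mapsto C_t$ is continuous and non-decreasing, so pointwise convergence in probability on a dense countable set combined with the monotone Dini argument yields uniform convergence in probability on every compact $[0,T]$, which in particular implies the claimed convergence in $D([0,\infty),\R)$.
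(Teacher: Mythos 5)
Your Steps 1, 2 and 4 are correct and match the structure of the paper's proof: the explicit formula for the bracket, the identification of the two contributions (corners, returns to zero), and the passage from fixed-time convergence to convergence in $D([0,\infty),\R)$ via monotonicity of the bracket and continuity of the limit (the paper cites~\cite[Theorem~VI~3.37]{jacod2013limit}, which is essentially the Dini/P\'olya argument you sketch). However, Step~3 contains a genuine gap, and it is also a detour.

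The gap: you reduce the problem to a variance bound and propose to control the off-diagonal terms by ``a Kipnis--Varadhan-type estimate exploiting the reversibility of $\pi$ together with the locality of the generator''. This is left entirely as a sketch. To actually carry it out you would need to show that the observable $h \mapsto \sum_x \varphi(x)^2 \bigl(\mathbf{1}\{\Delta^\epsilon h(x)\neq 0,\,h(x)+\Delta^\epsilon h(x)\ge 0\}-c\bigr)$ has a suitable $H^{-1}$-norm bound with respect to the Dirichlet form of the infinite-volume dynamics, uniformly in $\epsilon$, and then deduce decay of the space-time covariance. None of this is standard for this model, and the locality of the generator alone does not give you a quantitative spatial decorrelation of the $Y_s(x)$'s under $\pi$ without additional work. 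As stated, the step cannot be checked.

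The detour: the variance estimate is in fact unnecessary, because Lemmas~\ref{lemme 1 annexe} and~\ref{lemme 2 annexe} give \emph{$L^1(\pi)$ convergence}, not merely convergence of expectations. Once you observe this, the upgrade from ``mean'' to ``in probability'' is immediate. Writing $A^{\epsilon,1}_t(\varphi) - \epsilon t\sum_x\varphi(x)^2 = \int_0^t \bigl(\tfrac12\sum_x(\Delta^\epsilon h^\epsilon_s(x))^2\varphi(x)^2 - 2\epsilon\sum_x\tfrac12\varphi(x)^2\bigr)ds$, you pull the absolute value inside the $ds$-integral, use Fubini and stationarity to reduce to a single-time $\pi$-expectation, and then Lemma~\ref{lemme 2 annexe} (applied with the Schwartz function $\varphi^2$) gives directly
\begin{equation*}
\E\Bigl[\bigl|A^{\epsilon,1}_t(\varphi)-\epsilon t\textstyle\sum_x\varphi(x)^2\bigr|\Bigr]
\le 2\epsilon t\,\pi\Bigl[\bigl|\textstyle\sum_{n\ge 1}\bigl(\mathbf{1}\{\Delta X_n\neq 0\}-\tfrac12\bigr)\varphi(\epsilon n)^2\bigr|\Bigr]\xrightarrow[\epsilon\to 0]{}0,
\end{equation*}
and similarly $\E[|A^{\epsilon,2}_t(\varphi)|]\to 0$ using Lemma~\ref{lemme 1 annexe}. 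This yields $L^1(\P)$-convergence of $\langle\!\langle W^\epsilon(\varphi)\rangle\!\rangle_t$ at each $t$, which is stronger than the convergence in probability you were after and avoids any second-moment computation on the dynamics. I would suggest you replace your Step~3 with this observation.
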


\noindent Let us decompose the bracket process $\langle\!\langle W^\epsilon (\varphi), W^\epsilon (\varphi) \rangle\!\rangle$ as follows
\begin{equation} \label{decomposition of the bracket process}
    \langle\!\langle W^\epsilon (\varphi) ,W^\epsilon (\varphi)  \rangle\!\rangle_t = A^{\epsilon,1}_t(\varphi) -A^{\epsilon,2}_t(\varphi) \qquad \forall t \geq 0
\end{equation}
with
\begin{align}
    A^{\epsilon,1}_t(\varphi) &:= \f{1}{2}  \sum_{x \in \epsilon \N^*}{} \Int{0}{t}
     \Delta^\epsilon h^\epsilon_s (x)^2 \varphi (x)^2 
    \: ds \\
    A^{\epsilon,2}_t(\varphi) &:=\f{1}{2}\sum_{x \in \epsilon \N^*}{} \Int{0}{t}
     \Delta^\epsilon h^\epsilon_s (x)^2
    1 \left\lbrace h_s^\epsilon (x) +\Delta^\epsilon h^\epsilon_s (x) < 0 \right\rbrace \varphi (x)^2 
    \: ds
\end{align}

\noindent Let us start by the two following lemmata, which contain the main ingredients to prove the convergence of the bracket process
\begin{lemma}(Returns to zero under the invariant measure){\large \textbf{.}} \label{returns to zero under the invariant measure} For every $\varphi \in \mathcal{S} ([0,\infty) )$ and $t \geq 0$
    \begin{equation}
            A^{\epsilon,2}_t(\varphi) \xrightarrow[\epsilon \to 0]{L^1(\mathbb{P})} 0
    \end{equation}
\end{lemma}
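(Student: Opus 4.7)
The key observation is that the constraint imposed on the interface by $\mathcal{X}$ forces the event $\{h^\epsilon_s(x) + \Delta^\epsilon h^\epsilon_s(x) < 0\}$ to be very rigid. Since at the non-rescaled level $|h(n+1)-h(n)|=1$, the discrete Laplacian takes values in $\{-2,0,2\}$, and the condition $h(n)+\Delta h(n)<0$ forces $h(n\pm 1) = h(n)-1$ together with $h(n) < 2$. Combined with $h \geq 0$, this leaves the unique configuration
\[
h(n)=1,\qquad h(n-1)=h(n+1)=0.
\]
Back in the rescaled setting (with $n = x/\epsilon$), this means $h^\epsilon_s(x) = \sqrt\epsilon$ and $\Delta^\epsilon h^\epsilon_s(x) = -2\sqrt\epsilon$, so in particular the summand equals $(2\sqrt\epsilon)^2 \,\varphi(x)^2 \cdot \mathbf{1}\{h_s(n)=1\}$ at most.

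Since $A^{\epsilon,2}_t(\varphi)\geq 0$, $L^1$ convergence reduces to showing $\mathbb{E}[A^{\epsilon,2}_t(\varphi)]\to 0$. By Tonelli and the stationarity of $(h_t)_{t\ge 0}$ under $\pi$,
\[
\mathbb{E}\bigl[A^{\epsilon,2}_t(\varphi)\bigr] \leq \frac{t}{2}\cdot 4\epsilon \sum_{n \in \N^*} \pi(X_n = 1)\,\varphi(n\epsilon)^2 = 2t\cdot \frac{1}{N}\sum_{n\in \N^*}\pi(X_n=1)\,\psi(n/N),
\]
where $N := 1/\epsilon$ and $\psi := \varphi^2 \in \mathcal{S}([0,\infty])$.

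The final step is to invoke Lemma \ref{lemme 1 annexe} (transience) applied with $k=1$ and test function $\psi$: it gives that $\frac{1}{N}\sum_n \mathbf{1}\{X_n=1\}\psi(n/N)$ converges to $0$ in $L^1(\pi)$, hence in particular its expectation (with respect to $\pi$) tends to $0$. This forces the right-hand side above to vanish as $\epsilon\to 0$, which concludes the proof.

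There is no real obstacle: the conceptual content is entirely in (i) the combinatorial identification of the "forbidden" configuration, which collapses the indicator to a return-to-level-one event, and (ii) the transience lemma applied in stationarity.
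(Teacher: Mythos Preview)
Your argument is correct and essentially identical to the paper's: both bound $(\Delta^\epsilon h^\epsilon)^2$ by $4\epsilon$, use stationarity to reduce the time integral to a $\pi$-expectation, and invoke Lemma~\ref{lemme 1 annexe} (transience) to conclude. The only difference is cosmetic: you make explicit the combinatorial fact that $\{X_n+\Delta X_n<0\}$ forces $X_n=1$ before applying Lemma~\ref{lemme 1 annexe} with $k=1$, whereas the paper leaves this implicit and applies the lemma directly to the indicator $\mathbf{1}\{X_n+\Delta X_n<0\}$.
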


\begin{proof}
    We have
    \begin{align*}
        \E \left[ A^{\epsilon,2}_t(\varphi)  \right]
        &=\mathbb{E} \left[ \f{1}{2}  \sum_{x \in \epsilon \N^*}{} \Int{0}{t} (\Delta^\epsilon h_s^\epsilon)^2 1 \left\lbrace h_s^\epsilon (x) +\Delta^\epsilon h^\epsilon_s(x) < 0 \right\rbrace \varphi (x)^2 \: ds \right] \\
        &\leq 2 \epsilon \, \mathbb{E}  \left[ \sum_{x \in \epsilon \N^*}{} \Int{0}{t} 1 \left\lbrace h_s^\epsilon (x) +\Delta^\epsilon h^\epsilon_s(x) < 0 \right\rbrace \varphi (x)^2 \: ds \right] \\
        &\leq 2 \epsilon t \mathbb{E}  \left[ \sum_{x \in \epsilon \N^*}{} 1 \left\lbrace h_0^\epsilon (x) +\Delta^\epsilon h^\epsilon_0(x) < 0 \right\rbrace \varphi(x)^2 \right] \\
        &\leq 2 t \epsilon \pi \left[ \sum_{n \in \N^*  }{} 1 \left\lbrace X_n +\Delta X_n < 0 \right\rbrace \varphi (\epsilon n)^2 \right] \\
        &\xrightarrow[\epsilon \to 0]{} 0
    \end{align*}
    We used the fact that the process starts from the stationary measure $\pi$ in the third line, and used Lemma \ref{lemme 1 annexe} to obtain the last line.
\end{proof}

\begin{lemma}(Corners under the invariant measure){\large \textbf{.}} \label{corners under the invariant measure}
    For every $\varphi \in \mathcal{S}( [0,\infty))$ and  $t \geq 0$ 
    \begin{equation}
        \left( A^{\epsilon,1}_t(\varphi) - \epsilon t \sum_{x \in \epsilon \N^*}{} \varphi (x)^2 \right) \xrightarrow[\epsilon \to 0]{L^1(\mathbb{P})} 0
    \end{equation}
\end{lemma}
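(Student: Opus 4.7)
The plan is to reduce the claim to Lemma \ref{lemme 2 annexe} (average number of corners) via stationarity and Fubini. The first step uses that the rescaled discrete Laplacian takes values in $\{-2\sqrt{\epsilon}, 0, 2\sqrt{\epsilon}\}$, so that $(\Delta^\epsilon h^\epsilon_s(x))^2 = 4\epsilon \, 1\{\Delta^\epsilon h^\epsilon_s(x) \neq 0\}$. This allows me to rewrite
$$A^{\epsilon,1}_t(\varphi) = 2\int_0^t \tilde{A}^\epsilon_s \, ds, \qquad \text{where} \quad \tilde{A}^\epsilon_s := \epsilon \sum_{x \in \epsilon \N^*} 1 \left\lbrace \Delta^\epsilon h^\epsilon_s(x) \neq 0 \right\rbrace \varphi(x)^2.$$

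The key observation is that, thanks to stationarity, $\tilde{A}^\epsilon_s$ has the same law as $\tilde{A}^\epsilon_0$ for every $s \geq 0$. Since under $\pi$ the identity $\left\lbrace \Delta^\epsilon h^\epsilon_0(\epsilon n) \neq 0 \right\rbrace = \left\lbrace \Delta X_n \neq 0 \right\rbrace$ holds, I can apply Lemma \ref{lemme 2 annexe} with the test function $\psi := \varphi^2$ (which belongs to $\mathcal{S}([0,\infty))$ since $\varphi$ does) and $N := 1/\epsilon$, to obtain
$$\tilde{A}^\epsilon_0 \xrightarrow[\epsilon \to 0]{L^1(\pi)} \frac{1}{2} \int_0^\infty \varphi(x)^2 \, dx.$$

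Combining these two facts via Fubini gives
$$\mathbb{E}\left| A^{\epsilon,1}_t(\varphi) - t \int_0^\infty \varphi^2 \right| \le 2 \int_0^t \mathbb{E}\left| \tilde{A}^\epsilon_s - \tfrac{1}{2}\int_0^\infty \varphi^2 \right| ds = 2t \, \mathbb{E}\left| \tilde{A}^\epsilon_0 - \tfrac{1}{2}\int_0^\infty \varphi^2 \right| \xrightarrow[\epsilon\to 0]{} 0.$$
On the other hand, since $\varphi \in \mathcal{S}([0,\infty))$, the Riemann sum $\epsilon t \sum_{x \in \epsilon \N^*} \varphi(x)^2$ converges deterministically to $t \int_0^\infty \varphi^2$. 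The triangle inequality then concludes. There is no genuine obstacle in this approach: all of the ingredients have been set up in the preliminary section, and the only minor point to verify is that $\varphi^2 \in \mathcal{S}([0,\infty))$, which is immediate.
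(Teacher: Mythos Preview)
Your proof is correct and follows essentially the same route as the paper: rewrite $(\Delta^\epsilon h^\epsilon_s(x))^2 = 4\epsilon\,1\{\Delta^\epsilon h^\epsilon_s(x)\neq 0\}$, pull the absolute value inside the time integral, use stationarity to replace $s$ by $0$, and then invoke Lemma~\ref{lemme 2 annexe} with $\varphi^2$. The only cosmetic difference is that the paper compares $A^{\epsilon,1}_t(\varphi)$ directly to the Riemann sum $\epsilon t\sum_x \varphi(x)^2$, whereas you pass through the integral $t\int_0^\infty \varphi^2$ and then use the triangle inequality together with Riemann-sum convergence; the two are equivalent.
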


\begin{proof}
    We have
    \begin{align*}
     \E \left[ \left| A^{\epsilon,1}_t(\varphi) - \epsilon t \sum_{x \in \epsilon \N}{} \varphi (x)^2 \right| \right] 
    &= \mathbb{E}\left[ \f{1}{2} \left| \sum_{x \in \epsilon \N^*}{} \Int{0}{t} \left( \Delta^\epsilon h_s^\epsilon(x)^2  - 2\epsilon \right) \varphi(x)^2  \: ds \right| \right] \\
    &=\mathbb{E}\left[ \f{1}{2} \left|  \sum_{x \in \epsilon \N^*}{} \Int{0}{t} \left( 4\epsilon 1 \left\lbrace  \Delta^\epsilon h_s^\epsilon (x) \neq 0 \right\rbrace - 2\epsilon  \right) \varphi(x)^2  \: ds \right|  \right] \\
    &\leq 2 \epsilon \Int{0}{t}   \mathbb{E}  \left[ \left| \sum_{x \in \epsilon \N^* }{}  \left( 1 \left\lbrace  \Delta^\epsilon h_s^\epsilon (x) \neq 0 \right\rbrace -\f{1}{2}  \right) \varphi(x)^2 \right|  \right] \: ds \\
    &\leq 2 \epsilon t \pi  \left[ \left| \sum_{n \in \N^* }{}  \left( 1 \left\lbrace  \Delta X_n \neq 0 \right\rbrace -\f{1}{2}  \right) \varphi(\epsilon n)^2 \right|  \right] \\
    &\xrightarrow[\epsilon \to 0]{} 0
    \end{align*}
    We used the fact that the process starts from the stationary measure $\pi$ to obtain the fourth line, and Lemma \ref{lemme 2 annexe} to obtain the last line.
\end{proof}

\noindent We can now proceed to the proof of the convergence of the bracket process.

\begin{proof}[Proof of Proposition \ref{functional convergence of the bracket process}]
    Let $\varphi \in \mathcal{S}( [0,\infty))$ first, from Lemma \ref{corners under the invariant measure}, Lemma \ref{returns to zero under the invariant measure} and the decomposition \eqref{decomposition of the bracket process}, we deduce that for all $t \geq 0$
    \begin{equation*}
        \langle\!\langle W^\epsilon (\varphi), W^\epsilon (\varphi) \rangle\!\rangle_t \xrightarrow[\epsilon \to 0]{L^1(\mathbb{P})} t \norme{\varphi}_{L^2( [0,\infty))}^2
    \end{equation*}
    This in particular proves finite-dimensional convergence in law of  $\langle\!\langle W^\epsilon (\varphi), W^\epsilon (\varphi) \rangle\!\rangle$ towards $C$. Now since the processes $\langle\!\langle W^\epsilon (\varphi), W^\epsilon (\varphi) \rangle\!\rangle$ and $C$ are increasing and since $C$ is continuous, finite-dimensional convergence in law implies convergence in law by \cite[Theorem VI 3.37]{jacod2013limit}, which proves \eqref{functional cv of the bracket process}.
\end{proof}

\subsection{Convergence of the martingale} In this paragraph we fix a cylindrical Wiener process $W$ and prove the following convergence result

\begin{proposition} \label{functional convergence in law of the martingale}
    For all $\varphi \in \mathcal{S}([0,\infty))$, the following convergence holds
    \begin{equation} \label{functional cv in law of the martingale}
        W^\epsilon(\varphi) \xrightarrow[\epsilon \to 0]{\mathcal{L}} W(\varphi)
    \end{equation}
     as $D([0,\infty), \R)$-valued random variables.
\end{proposition}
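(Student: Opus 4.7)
The idea is to apply a functional central limit theorem for càdlàg martingales, such as \cite[Theorem VIII.3.11]{jacod2013limit} (or equivalently Rebolledo's theorem), which ensures that a sequence of locally square-integrable martingales whose predictable quadratic variations converge in probability to a deterministic continuous function $C_t$, and whose jumps vanish uniformly in $t$ on compact sets in probability, converges in law in the Skorokhod topology to a continuous Gaussian martingale with variance $C_t$.

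First, I would fix $\varphi \in \mathcal{S}([0,\infty))$ and recall that, by construction, $(W^\epsilon_t(\varphi))_{t\ge 0}$ is a càdlàg locally square-integrable martingale with respect to the natural filtration of the Poisson processes $(N^\epsilon_\cdot(x))_{x\in \epsilon\N^*}$: its jumps are localized at the jump times of the underlying Poisson processes and are explicitly given, when $N^\epsilon(x)$ jumps at time $s$, by
\begin{equation*}
    \Delta W^\epsilon_s(\varphi) = \frac{\epsilon}{\sqrt{2}} \Delta^\epsilon h^\epsilon_{s^-}(x) \, 1\{ h^\epsilon_{s^-}(x) + \Delta^\epsilon h^\epsilon_{s^-}(x) \geq 0\}\, \varphi(x).
\end{equation*}
Using the key deterministic bound $|\Delta^\epsilon h^\epsilon_s(x)| \le 2\sqrt{\epsilon}$ and the boundedness of $\varphi$, one obtains the uniform estimate
\begin{equation*}
    \sup_{s \ge 0} |\Delta W^\epsilon_s(\varphi)| \le \sqrt{2}\, \epsilon^{3/2} \, \|\varphi\|_{\infty}.
\end{equation*}
This immediately implies the vanishing-jumps condition: for any fixed $T>0$ and $\delta>0$, $\sup_{s \le T}|\Delta W^\epsilon_s(\varphi)| \le \delta$ deterministically for $\epsilon$ small enough, and \emph{a fortiori} in probability.

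Second, I would invoke Proposition \ref{functional convergence of the bracket process}, which is already proved in the excerpt, to get that the predictable quadratic variation $\langle\!\langle W^\epsilon(\varphi), W^\epsilon(\varphi)\rangle\!\rangle$ converges in law in $D([0,\infty),\R)$ (and hence in probability for each fixed $t$, since the limit is deterministic) to the continuous deterministic function $C_t = t \|\varphi\|^2_{L^2([0,\infty))}$. Combined with the jump-size bound above, the hypotheses of \cite[Theorem VIII.3.11]{jacod2013limit} are satisfied, and we obtain
\begin{equation*}
    W^\epsilon(\varphi) \xrightarrow[\epsilon \to 0]{\mathcal{L}} B^\varphi \quad \text{in } D([0,\infty),\R),
\end{equation*}
where $B^\varphi$ is a continuous centered Gaussian martingale with bracket $t\|\varphi\|^2_{L^2}$, i.e.\ a Brownian motion of variance $\|\varphi\|^2_{L^2}$. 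Since by definition $W(\varphi)$ is precisely a Brownian motion with this variance, $B^\varphi$ has the same law as $W(\varphi)$, which yields \eqref{functional cv in law of the martingale}.

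The main conceptual work has actually already been done in the bracket convergence (Proposition \ref{functional convergence of the bracket process}), so the present step is largely a machinery check. The only genuine obstacle is verifying the jump condition, but this is handled cleanly by the deterministic bound $|\Delta^\epsilon h^\epsilon| \le 2\sqrt{\epsilon}$, which makes the jumps of $W^\epsilon(\varphi)$ of order $\epsilon^{3/2}$ and therefore negligible; no stochastic argument is required at this point.
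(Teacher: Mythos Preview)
Your proof is correct and follows essentially the same approach as the paper: you apply \cite[Theorem VIII.3.11]{jacod2013limit}, verify the jump conditions via the same deterministic bound $\sup_{s\ge 0}|\Delta W^\epsilon_s(\varphi)| \le \sqrt{2}\,\epsilon^{3/2}\|\varphi\|_\infty$, and invoke Proposition~\ref{functional convergence of the bracket process} for the bracket convergence. The paper's proof is essentially the same, only phrased slightly more tersely.
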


\noindent To do so, we rely on the convergence of the bracket process proved in the previous paragraph, and on the convergence criterion \cite[Theorem VIII, 3.11]{jacod2013limit} which can be written in the following way: 

\begin{theorem} \label{functional limit theorem jacod shiryaev}
    Let $(X^\epsilon)_{\epsilon \in (0,1]}$ be a family of càdlàg martingales and $X$ a continuous Gaussian martingale. Assume that
    \begin{enumerate}[(i)]
        \item There exists $K>0$ such that almost surely, 
        \begin{equation*}
            \forall t \ge 0, \forall \epsilon \in (0,1] \qquad \abs{X^\epsilon_t - X^\epsilon_{t^-}} \le K.
        \end{equation*}
        \item The following convergence holds
        \begin{equation*}
            \displaystyle{\sup_{t \ge 0} \abs{X^\epsilon_t - X^\epsilon_{t^-}}} \xrightarrow[\epsilon \to 0]{\mathbb{P}} 0.
        \end{equation*}
        \item The following convergence of $D([0,\infty),\R)$-valued random variables holds
        \begin{equation*}
            \langle\!\langle X^\epsilon, X^\epsilon \rangle\!\rangle \xrightarrow[\epsilon \to 0]{\mathcal{L}} \langle\!\langle X, X \rangle\!\rangle.
        \end{equation*}
        
    \end{enumerate}
    Then $X^\epsilon \xrightarrow[\epsilon \to 0]{\mathcal{L}} X$ as $D([0,\infty),\R)$-valued random variables.
\end{theorem}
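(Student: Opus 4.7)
The plan is to prove convergence in law by combining tightness of $(X^\epsilon)_{\epsilon \in (0,1]}$ in $D([0,\infty), \R)$ with identification of every subsequential limit as the continuous Gaussian martingale $X$. For tightness I would use Aldous' criterion on stopping times, which reduces to estimates on increments in terms of the predictable bracket. The key identity is that for stopping times $\tau^\epsilon \le \sigma^\epsilon \le \tau^\epsilon + \delta$ bounded by $T$, the optional sampling theorem gives
\begin{equation*}
    \mathbb{E}\bigl[(X^\epsilon_{\sigma^\epsilon} - X^\epsilon_{\tau^\epsilon})^2\bigr] = \mathbb{E}\bigl[\langle\!\langle X^\epsilon \rangle\!\rangle_{\sigma^\epsilon} - \langle\!\langle X^\epsilon \rangle\!\rangle_{\tau^\epsilon}\bigr],
\end{equation*}
whose right-hand side is small uniformly in $\epsilon$ as $\delta \to 0$: the limit $\langle\!\langle X \rangle\!\rangle$ is deterministic and continuous (since $X$ is Gaussian), so assumption (iii) propagates uniform continuity, while assumption (i) of bounded jumps allows one to upgrade tightness of the brackets to the $L^1$-control needed, via Lenglart's domination inequality. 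Tightness of the one-dimensional marginals follows from the $L^2$-bound $\mathbb{E}[(X^\epsilon_t)^2] = \mathbb{E}[(X^\epsilon_0)^2] + \mathbb{E}[\langle\!\langle X^\epsilon \rangle\!\rangle_t]$ combined with Doob's inequality.

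For the identification, fix a subsequential weak limit $Y$. First, condition (ii) forces $Y$ to have almost surely continuous sample paths, since the maximal jump of $X^\epsilon$ going to zero in probability propagates to any weak limit in $D([0,\infty), \R)$. Second, $Y$ is a martingale: passing through a Skorokhod representation, the uniform integrability required to transfer the identity $\mathbb{E}[X^\epsilon_t \mid \mathcal{F}_s^\epsilon] = X^\epsilon_s$ to the limit comes from the $L^2$-bound above. Third, the predictable bracket of $Y$ equals $\langle\!\langle X \rangle\!\rangle$: the process $(X^\epsilon)^2 - \langle\!\langle X^\epsilon \rangle\!\rangle$ is a martingale, and by the same uniform integrability together with joint convergence of $(X^\epsilon, \langle\!\langle X^\epsilon \rangle\!\rangle)$ to $(Y, \langle\!\langle X \rangle\!\rangle)$ (obtained from marginal tightness plus the deterministic nature of $\langle\!\langle X \rangle\!\rangle$), its limit $Y^2 - \langle\!\langle X \rangle\!\rangle$ is a martingale; uniqueness of the Doob--Meyer decomposition for continuous martingales then gives $\langle\!\langle Y \rangle\!\rangle = \langle\!\langle X \rangle\!\rangle$.

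To conclude, I would invoke the Dubins--Schwarz theorem: a continuous martingale is a time change of a Brownian motion by its own predictable bracket. Since $\langle\!\langle Y \rangle\!\rangle = \langle\!\langle X \rangle\!\rangle$ is deterministic, $Y$ and $X$ become the same deterministic time change of a Brownian motion (with matching initial values dictated by the limit of $X^\epsilon_0$), hence share the same finite-dimensional distributions. This forces $Y \overset{\mathcal{L}}{=} X$ and closes the subsequential limit argument, yielding the full convergence. The principal obstacle I anticipate is the joint passage to the limit in the Doob--Meyer identification, namely showing that $(X^\epsilon_t)^2$ has enough uniform integrability for $Y^2 - \langle\!\langle X \rangle\!\rangle$ to be a martingale: a priori assumption (iii) yields only tightness of the brackets, not an $L^1$-bound, and it is precisely the bounded-jump hypothesis (i) that saves the day, via a localization procedure using stopping times together with Lenglart's inequality.
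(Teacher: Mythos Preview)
The paper does not prove this theorem: it is stated as a direct restatement of \cite[Theorem VIII, 3.11]{jacod2013limit} and invoked as a black box in the proof of Proposition \ref{functional convergence in law of the martingale}. There is therefore no ``paper's own proof'' to compare against.

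That said, your sketch follows a standard route to such martingale functional limit theorems (tightness via Aldous, continuity of the limit from vanishing jumps, identification of the bracket via Doob--Meyer, and Gaussianity via Dubins--Schwarz), and the key observation that a continuous Gaussian martingale has deterministic bracket is correct and essential. Two technical points deserve more care if you were to make this rigorous. First, the identity $\mathbb{E}[(X^\epsilon_\sigma - X^\epsilon_\tau)^2] = \mathbb{E}[\langle\!\langle X^\epsilon\rangle\!\rangle_\sigma - \langle\!\langle X^\epsilon\rangle\!\rangle_\tau]$ presupposes square integrability, which is not in the hypotheses; you correctly note that localization and Lenglart's inequality are the remedy, but the Aldous bound then lives at the level of probability rather than $L^2$, so the argument should be phrased accordingly. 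Second, passing the martingale property of $(X^\epsilon)^2 - \langle\!\langle X^\epsilon\rangle\!\rangle$ to the limit requires uniform integrability of $(X^\epsilon_t)^2$ along the sequence, which again is not immediate from (iii) alone; one typically stops at the first time the bracket exceeds a large level and uses (i) to control the overshoot, then removes the localization afterwards. These are exactly the issues you flag at the end, so the plan is sound, but in a full proof they are where the work lies.
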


\noindent We now prove the convergence result.

\begin{proof}[Proof of Proposition \ref{functional convergence in law of the martingale}]
    Let $\varphi \in \mathcal{S}([0,\infty))$. It suffices to apply Theorem \ref{functional limit theorem jacod shiryaev} to the family of martingales $(W^\epsilon(\varphi))_{\epsilon \in (0,1]}$ and to the continuous Gaussian martingale $W(\varphi)$, whose bracket process is $C$. It follows from the deterministic bound 
    \begin{equation*}
        \forall t \ge 0, \forall \epsilon \in (0,1], \qquad \abs{W^\epsilon_t(\varphi)-W^\epsilon_{t^-}(\varphi)} \leq \sqrt{2} \epsilon^{3/2} \| \varphi\|_\infty
    \end{equation*}
    that assumptions $(i)$ and $(ii)$ of Theorem \ref{functional limit theorem jacod shiryaev} are satisfied. Moreover, assumption $(iii)$ of Theorem \ref{functional limit theorem jacod shiryaev} is a consequence of Proposition \ref{functional convergence of the bracket process}. Thus by Theorem \ref{functional limit theorem jacod shiryaev} we deduce that \eqref{functional cv in law of the martingale} is satisfied, which concludes the proof.
\end{proof}

\subsection{Convergence towards white noise}
In order to prove Theorem \ref{convergence of xi}, we rely on the following tightness criterion in $D \left([0,\infty),\mathcal{S}'([0,\infty)) \right)$.

\begin{lemma}(Mitoma's criterion \cite{mitoma}){\large \textbf{.}} \label{Mitoma}
A collection $(X^\epsilon)_{\epsilon \in (0,1]}$ of $D \left([0,\infty),\mathcal{S}'([0,\infty)) \right)$-valued random variables is tight if and only if for all $\varphi \in \mathcal{S}([0,\infty))$, the collection $(X^\epsilon(\varphi))_{\epsilon \in (0,1]}$ is tight in $D([0,\infty),\R)$.
\end{lemma}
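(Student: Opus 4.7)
The forward implication is a soft argument. For each $\varphi \in \mathcal{S}([0,\infty))$ the evaluation map $\operatorname{ev}_\varphi:\mathcal{S}'([0,\infty))\to\R$, $\nu \mapsto \nu(\varphi)$, is continuous, so the induced map on Skorokhod spaces $D([0,\infty),\mathcal{S}')\to D([0,\infty),\R)$, $X\mapsto X(\varphi)$, is continuous as well. Continuous images of tight families are tight, so tightness of $(X^\epsilon)$ immediately yields tightness of $(X^\epsilon(\varphi))$.

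For the non-trivial direction, the plan is to exploit the nuclearity of $\mathcal{S}([0,\infty))$. I would first realize $\mathcal{S}([0,\infty))$ as a projective limit of a decreasing sequence of Hilbert spaces $(\mathcal{S}_n,\|\cdot\|_n)_{n\ge 0}$ built from an orthonormal basis of Hermite-type functions $(e_k)_{k\ge 1}$ of $L^2([0,\infty))$ and weights $(1+\lambda_k)$ going to infinity fast enough; the dual identification gives $\mathcal{S}'([0,\infty)) = \bigcup_n \mathcal{S}_{-n}$ with the inclusions $\mathcal{S}_{-n}\hookrightarrow \mathcal{S}_{-m}$ Hilbert-Schmidt (and in particular compact) for $m$ large enough compared to $n$. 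A subset of $\mathcal{S}'$ is relatively compact if and only if it is bounded in some $\mathcal{S}_{-n}$, and for each $n$ one has $\|\nu\|_{-n}^2 = \sum_k (1+\lambda_k)^{-2n}|\nu(e_k)|^2$.

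Now suppose that $(X^\epsilon(\varphi))_{\epsilon}$ is tight in $D([0,\infty),\R)$ for every $\varphi\in\mathcal{S}$, and fix $T>0$. Applied to each $e_k$, tightness gives constants $K_k>0$ such that, uniformly in $\epsilon$,
\begin{equation*}
\mathbb{P}\Bigl(\sup_{t\in[0,T]}|X^\epsilon_t(e_k)|>K_k\Bigr)<\eta\, 2^{-k}.
\end{equation*}
Choosing $n$ large enough that $\sum_k (1+\lambda_k)^{-2n}K_k^2 < \infty$, a union bound yields a uniform compact containment: the ball of $\mathcal{S}_{-n}$ of radius $(\sum_k (1+\lambda_k)^{-2n}K_k^2)^{1/2}$ contains $X^\epsilon_t$ for all $t\in[0,T]$ with probability at least $1-\eta$. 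By the compact embedding $\mathcal{S}_{-n}\hookrightarrow \mathcal{S}_{-m}$ for $m>n$ suitably chosen, this ball is compact in $\mathcal{S}_{-m}$ and hence in $\mathcal{S}'$.

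It remains to control the modulus of continuity in $\mathcal{S}_{-m}$. The idea is the same diagonal argument: for each $k$, tightness of $(X^\epsilon(e_k))$ in $D([0,\infty),\R)$ furnishes, by the Aldous-Kurtz criterion, a sequence $\delta_k(\eta)\to 0$ controlling the Skorokhod modulus $w'_T(X^\epsilon(e_k),\delta_k(\eta))$ with probability at least $1-\eta 2^{-k}$ uniformly in $\epsilon$. Summing weighted by $(1+\lambda_k)^{-2m}$, with $m>n$ chosen even larger if necessary so that the sum converges, one obtains a uniform control on the Skorokhod modulus in $\mathcal{S}_{-m}$ on the intersection event (again of probability $\ge 1-2\eta$). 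Combining this with compact containment and applying the Jakubowski/Mitoma tightness criterion in $D([0,T],\mathcal{S}_{-m})$ yields tightness of $(X^\epsilon)$ in $D([0,T],\mathcal{S}')$, and since $T$ was arbitrary, in $D([0,\infty),\mathcal{S}')$. The main subtlety is the simultaneous choice of the indices $n,m$ and the careful diagonal summation so that both compact containment and Skorokhod equicontinuity are realized in the \emph{same} Hilbertian dual space $\mathcal{S}_{-m}$; the nuclearity of $\mathcal{S}$ is precisely what makes this diagonal step possible.
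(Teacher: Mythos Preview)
The paper does not prove this lemma: it is stated as Mitoma's criterion with a reference to \cite{mitoma}, and is used as a black box in the proof of Theorem~\ref{convergence of xi}. There is therefore nothing to compare against in the paper itself.

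Your sketch follows the standard route to Mitoma's theorem via the nuclear structure of $\mathcal{S}$, and the overall architecture (compact containment in some $\mathcal{S}_{-m}$ plus control of the Skorokhod modulus, both obtained coordinatewise against a Hermite-type basis) is correct. One point deserves care, however. In the compact containment step you write ``choosing $n$ large enough that $\sum_k (1+\lambda_k)^{-2n}K_k^2 < \infty$'', but the constants $K_k$ come from tightness of each $(X^\epsilon(e_k))_\epsilon$ separately and carry no a priori growth control in $k$; since $(1+\lambda_k)^{-2n}$ only decays polynomially in $k$, there is no reason such an $n$ exists in general. Mitoma's original argument avoids this by first establishing, via a uniform boundedness/closed graph type step, that tightness of all the real-valued processes forces the laws of $X^\epsilon_t$ to be uniformly concentrated on some fixed $\mathcal{S}_{-n}$ (equivalently, that the map $\varphi\mapsto$ ``size of the tight set for $X^\epsilon(\varphi)$'' is continuous for some seminorm $\|\cdot\|_n$). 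Once that level $n$ is fixed, the Hilbert--Schmidt embedding into $\mathcal{S}_{-m}$ does the rest exactly as you describe. So your strategy is the right one, but the diagonal summation needs this extra equicontinuity input to close.
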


\noindent We now prove the main result of this section.

\begin{proof}[Proof of Theorem \ref{convergence of xi}]
    It follows from Proposition \ref{functional convergence in law of the martingale} and Lemma \ref{Mitoma} that the sequence $(W^\epsilon)_{\epsilon \in (0,1]}$ is tight in $D([0,\infty),\mathcal{S}'([0,\infty)))$. By Le Cam's generalization of Prokhorov's theorem (see \cite[Theorem 6.7]{walsh} or \cite[Section 5, Theorem 2]{smolyanov1976measures}) the collection $(W^\epsilon)_{\epsilon \in (0,1]}$ is relatively sequentially compact for the convergence in law. Let $X$ be a $D([0,\infty),\mathcal{S}'([0,\infty)))$-valued random variable which is a limit point of $(W^\epsilon)_{\epsilon \in (0,1]}$. Then for all $\varphi \in \mathcal{S}'([0,\infty))$, by continuity of the map 
    \begin{equation*}
        \begin{array}{rl}
            \pi_\varphi : D([0,\infty),\mathcal{S}'([0,\infty))) &\longrightarrow  D([0,\infty), \R ) \\
            (x_t)_{t \geq 0} &\longmapsto (x_t(\varphi))_{t \geq 0}
        \end{array}
    \end{equation*}
    we have that
    \begin{equation} \label{cv in law test against varphi}
        W^\epsilon (\varphi) \xrightarrow[\epsilon \to 0]{\mathcal{L}} X(\varphi)
    \end{equation}
    Now from \eqref{cv in law test against varphi} and Proposition \ref{functional convergence in law of the martingale} we deduce that $X(\varphi)$ is a Brownian Motion of variance $\norme{\varphi}_{L^2([0,\infty))}$. Thus, $X$ is a cylindrical Wiener process and we have characterized uniquely the law of any limit point, which concludes the proof.
\end{proof}

\section{Tightness of \texorpdfstring{$(\eta^\epsilon)_{\epsilon \in (0,1]}$}{eta} }

\noindent In this section we focus on the discrete reflection term
\begin{equation}
    \eta^\epsilon(dt,dx):= \f{2}{\sqrt{\epsilon}} \sum_{k \in \epsilon \N}{} 1 \left\lbrace h_t^\epsilon (k) +\Delta^\epsilon h_t^\epsilon (k) <0 \right\rbrace \delta_k (dx) \: dt.
\end{equation}
We endowed the set $\mathbb{M}$ defined in \eqref{space M of measures}, with the smallest topology that makes
\begin{equation*}
         \nu \in \mathbb{M} \mapsto \Int{[0,\infty) \times [0,\infty)}{} x \psi (t,x) \nu(dt,dx)
\end{equation*}
continuous, for all maps $\psi \in C_c([0,\infty) \times [0,\infty))$. Our goal in this section is to prove that the collection $(\eta^\epsilon)_{\epsilon \in (0,1]}$ is tight in $\mathbb{M}$. 
Roughly speaking, thanks to the semi-discrete PDE satisfied by $h^\epsilon$, tightness of $(\eta^\epsilon)_{\epsilon \in (0,1]}$ will be a consequence of tightness of $(h^\epsilon)_{\epsilon \in (0,1]}$ and of $(W^\epsilon)_{\epsilon \in (0,1]}$. To make this rigorous, we first need to control an error term due to the discretization $\langle \cdot , \cdot \rangle_{\epsilon}$ of the $L^2([0,\infty),dx)$ inner product $\langle \cdot, \cdot \rangle$ 
\begin{align*}
R^\epsilon_t(\varphi)&:=\langle h_t^\epsilon, \varphi \rangle_\epsilon - \langle h_t^\epsilon, \varphi \rangle
        -\langle h^\epsilon_0, \varphi \rangle_\epsilon + \langle h^\epsilon_0, \varphi \rangle \\
        &- \epsilon \sum_{x \in \epsilon \N}{} \Int{0}{t} \f{1}{\epsilon^2} \Delta^\epsilon h^\epsilon_s (x) \varphi (x) \: ds +  \Int{0}{t} \langle h^\epsilon_s,  \varphi ''  \rangle \: ds.
\end{align*}
With this definition, the semi-discrete PDE \eqref{variational semimartingale equation} satisfied by $h^\epsilon$ rewrites
\begin{multline} \label{semi discrete pde with error term}
    \langle h^\epsilon_t, \varphi \rangle -\langle h^\epsilon_0, \varphi \rangle 
    - \Int{0}{t} \langle h^\epsilon_s, \varphi '' \rangle \: ds
    - \Int{[0,t]\times [0,\infty)}{} \varphi(x) \: d\eta^\epsilon(ds,dx) - \sqrt{2} W^\epsilon_t(\varphi) + R^\epsilon_t(\varphi) =0
\end{multline}
where $W^\epsilon_t(\varphi)$ was introduced in \eqref{definition martingale}.
The next lemma shows that the error term vanishes in law as $\epsilon \to 0$.
\begin{lemma} \label{control of the error term}
    Let $\varphi \in C^\infty_c( [0,\infty))$ such that $\varphi(0)=0$. Then
    \begin{equation}
        (R_t^\epsilon(\varphi),t\ge 0) \xrightarrow[\epsilon \to 0]{\mathcal{L}} 0\;.
    \end{equation}
\end{lemma}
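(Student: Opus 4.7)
The plan is to reorganize $R^\epsilon_t(\varphi)$ into a sum of discretization defects, bound each with elementary Taylor and Riemann-sum estimates, and then promote those pointwise bounds to uniform-in-$t$ convergence on compact intervals via the tightness result of Section 3. The first step is a discrete summation by parts: a direct computation yields
\begin{equation*}
    \epsilon \sum_{x \in \epsilon \N^*} \f{1}{\epsilon^2} \Delta^\epsilon h_s^\epsilon(x)\, \varphi(x) = \left\langle h_s^\epsilon,\, \f{1}{\epsilon^2}\Delta^\epsilon \varphi \right\rangle_\epsilon + \f{1}{\epsilon}\bigl(h_s^\epsilon(0)\varphi(\epsilon) - h_s^\epsilon(\epsilon)\varphi(0)\bigr),
\end{equation*}
and the boundary contribution vanishes thanks to $h_s^\epsilon(0) = 0$ and the assumption $\varphi(0)=0$. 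Setting $D^\epsilon_s := \langle h_s^\epsilon,\varphi\rangle_\epsilon - \langle h_s^\epsilon,\varphi\rangle$ and $E^\epsilon_s := \langle h_s^\epsilon,\varphi''\rangle - \langle h_s^\epsilon, \f{1}{\epsilon^2}\Delta^\epsilon\varphi\rangle_\epsilon$, the definition of $R^\epsilon$ rewrites as
\begin{equation*}
    R^\epsilon_t(\varphi) = D^\epsilon_t - D^\epsilon_0 + \Int{0}{t} E^\epsilon_s\, ds.
\end{equation*}

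I then bound the two defects pointwise. Writing $\epsilon\sum_{x\in\epsilon\N}f(x) = \Int{0}{\infty} f(\epsilon\lfloor y/\epsilon\rfloor)\,dy$, piecewise linearity of $h^\epsilon_s$ gives $|h^\epsilon_s(\epsilon\lfloor y/\epsilon\rfloor) - h^\epsilon_s(y)| \le \sqrt\epsilon$ and smoothness of $\varphi$ gives $|\varphi(\epsilon\lfloor y/\epsilon\rfloor) - \varphi(y)| \le \epsilon\|\varphi'\|_\infty$. Letting $K$ denote the right endpoint of $\mathrm{supp}\,\varphi$, this produces the deterministic bound
\begin{equation*}
    |D^\epsilon_s| \le C_\varphi \bigl( \sqrt\epsilon + \epsilon \sup_{y\in[0,K+1]}|h^\epsilon_s(y)| \bigr).
\end{equation*}
Taylor expansion yields $\|\f{1}{\epsilon^2}\Delta^\epsilon\varphi - \varphi''\|_\infty = O(\epsilon^2)$, and since the $C^1$ norm of $\f{1}{\epsilon^2}\Delta^\epsilon\varphi$ is uniformly bounded in $\epsilon$, the same argument applied to it supplies an analogous bound for $|E^\epsilon_s|$.

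Finally I upgrade these bounds to uniform-in-$t$ convergence on $[0,T]$. The integrated contribution is handled by monotonicity: $\sup_{t\in[0,T]}|\Int{0}{t} E^\epsilon_s\,ds| \le \Int{0}{T} |E^\epsilon_s|\,ds$, whose expectation is $O(\sqrt\epsilon)$ by stationarity and the moment bound $\mathbb{E}[|h^\epsilon_s(y)|] \le C\sqrt y$ from \eqref{Eq:akbk}. For the boundary term, the pointwise bound combined with $\sup_{y\in[0,K+1]}|h^\epsilon_t(y)| \le e^{\rho(K+1)}\|h^\epsilon_t\|_{\mathcal{C}_\rho}$ gives
\begin{equation*}
    \sup_{t\in[0,T]}|D^\epsilon_t| \lesssim \sqrt\epsilon + \epsilon\,\sup_{t\in[0,T]}\|h^\epsilon_t\|_{\mathcal{C}_\rho},
\end{equation*}
and Theorem \ref{tightness of h} ensures that $\sup_{t\in[0,T]}\|h^\epsilon_t\|_{\mathcal{C}_\rho}$ is tight in $\R$. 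The right-hand side therefore tends to $0$ in probability, so $\sup_{t\in[0,T]}|R^\epsilon_t(\varphi)| \to 0$ in probability, which implies the announced convergence in law. The main obstacle is exactly this uniform-in-$t$ control of the boundary defect $D^\epsilon_t - D^\epsilon_0$: pointwise moment estimates alone do not suffice, and the full strength of the tightness of $(h^\epsilon)_\epsilon$ proved in Section 3 is needed.
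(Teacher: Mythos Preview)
Your proof is correct and follows essentially the same route as the paper's: both perform a discrete summation by parts to throw $\Delta^\epsilon$ onto $\varphi$, then bound the Riemann-sum defects by $\epsilon$ times a local sup-norm of $h^\epsilon$, and finally control that sup-norm uniformly in $\epsilon$. The only notable difference is in the last step: the paper establishes and uses the $L^1$ bound $\sup_\epsilon \mathbb{E}\big[\sup_{t\in[0,T]}\|h^\epsilon_t\|_{\infty,[0,A]}\big]<\infty$ (a direct consequence of \eqref{Eq:Holder} and Kolmogorov), yielding $L^1$ convergence of each piece, whereas you invoke tightness of $(h^\epsilon)$ in $D([0,\infty),\mathcal{C}_\rho)$ to get convergence in probability---either suffices for the stated convergence in law. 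One small slip to fix: your bound on $|E^\epsilon_s|$ involves $\sup_{y\in[0,K+1]}|h^\epsilon_s(y)|$, so the pointwise moment estimate $\mathbb{E}[|h^\epsilon_s(y)|]\le C\sqrt y$ alone does not control its expectation; you should either invoke Kolmogorov from the increment bounds in \eqref{Eq:akbk} to bound $\mathbb{E}[\sup_y|h^\epsilon_s(y)|]$, or replace the sup by $\int_0^{K+1}|h^\epsilon_s(y)|\,dy$ in your estimate, after which the pointwise bound is enough.
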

\noindent We postpone the proof of this lemma to Appendix \ref{proof control error term}. We can now state and prove the main result of this section.

\begin{theorem} \label{tightness of eta}
    The collection $(\eta^\epsilon)_{\epsilon \in (0,1]}$ of $\mathbb{M}$-valued random variables is tight. Furthermore any limit point $\eta$ satisfies the following property almost surely: for all $\varphi \in C^\infty_c( [0,\infty))$ such that $\varphi(0)=0$, $t\mapsto \int_{[0,t] \times [0,\infty)} \varphi(x) \: \eta(ds,dx)$ is continuous on $[0,\infty)$.
\end{theorem}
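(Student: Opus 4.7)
The overall strategy is to exploit the semi-discrete PDE \eqref{semi discrete pde with error term}, which expresses $\int_0^t \int \varphi(x)\,\eta^\epsilon(ds,dx)$, for any $\varphi \in C^\infty_c([0,\infty))$ vanishing at zero, as a sum of quantities already under control: functionals of $h^\epsilon$ (tight by Theorem \ref{tightness of h}), of the martingale $W^\epsilon$ (tight and convergent to a cylindrical Wiener process by Theorem \ref{convergence of xi}), and the error $R^\epsilon$ (negligible by Lemma \ref{control of the error term}).

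For the tightness statement, I would use the following criterion, which is the natural analog for $\mathbb{M}$ of the classical vague-topology tightness criterion for non-negative random measures: $(\eta^\epsilon)_{\epsilon \in (0,1]}$ is tight in $\mathbb{M}$ provided that for every $T, A > 0$, the real random variable $\int_{[0,T] \times [0,A]} x\,\eta^\epsilon(dt,dx)$ is tight in $\R$. Fixing $T, A > 0$, I would choose a non-negative $\varphi \in C^\infty_c([0,\infty))$ with $\varphi(0) = 0$ and $\varphi(x) \geq x$ on $[0, A]$, for instance $\varphi(x) = x\,\chi(x)$ where $\chi \in C^\infty_c$ equals $1$ on $[0,A]$. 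Since $\eta^\epsilon \geq 0$ one has $\int_{[0,T]\times[0,A]} x\,\eta^\epsilon \leq \int_0^T \int \varphi\,d\eta^\epsilon$, and by \eqref{semi discrete pde with error term} the right-hand side equals
\begin{equation*}
\langle h_T^\epsilon - h_0^\epsilon, \varphi\rangle - \int_0^T \langle h_s^\epsilon, \varphi''\rangle\,ds - \sqrt{2}\,W_T^\epsilon(\varphi) + R_T^\epsilon(\varphi),
\end{equation*}
each term of which is tight in $\R$ by the three inputs recalled above.

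For the continuity of limit points, I would pass to the limit jointly in $(h^\epsilon, W^\epsilon, \eta^\epsilon)$. Joint tightness follows from the three marginal tightness results, and Skorokhod's representation theorem permits one to work, along a subsequence, on a probability space where $(h^\epsilon, W^\epsilon, \eta^\epsilon) \to (u, W, \eta)$ almost surely, with $u \in C([0,\infty), \mathcal{C}_\rho)$. Fix $\varphi \in C^\infty_c$ with $\varphi(0) = 0$ and write $\varphi(x) = x\,\tilde\varphi(x)$ with $\tilde\varphi \in C_c([0,\infty))$. For each $t$ such that $\eta(\{t\}\times[0,\infty)) = 0$ (a co-countable random set of times), approximating $1_{[0,t]}(s)$ from above and below by continuous compactly supported functions of $s$ and invoking the continuity of $\nu \mapsto \int x\,\tilde\varphi(x)\,\psi(s)\,\nu(ds,dx)$ in the $\mathbb{M}$-topology yields
\begin{equation*}
\int_0^t \int \varphi\,d\eta^\epsilon \xrightarrow[\epsilon\to 0]{} \int_0^t \int \varphi\,d\eta.
\end{equation*}
Passing to the limit in \eqref{semi discrete pde with error term}, using Lemma \ref{control of the error term} to discard $R^\epsilon$, identifies this limit with $\langle u_t - u_0, \varphi\rangle - \int_0^t \langle u_s, \varphi''\rangle\,ds - \sqrt{2}\,W_t(\varphi)$, which is continuous in $t$. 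Since $|\varphi(x)| \leq \|\varphi'\|_\infty\, x$ and $\eta \in \mathbb{M}$, the function $t \mapsto \int_0^t \int \varphi\,d\eta$ has bounded variation on every $[0,T]$, hence only countably many jumps; coinciding with a continuous function on a dense set then forces continuity everywhere. The almost-sure property simultaneously for all admissible $\varphi$ follows by selecting a countable dense subfamily and extending through the same bounded-variation estimate.

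The main obstacle is the limit passage $\int_0^t \int \varphi\,d\eta^\epsilon \to \int_0^t \int \varphi\,d\eta$ at a fixed $t$: the indicator $1_{[0,t]}$ is discontinuous in $s$, so $\mathbb{M}$-convergence does not directly provide it. The argument above, based on identifying the at-most-countable set of atom times of $\eta$ and on the bounded variation of the resulting integral to extend continuity from a dense set to all of $[0,\infty)$, is the technically delicate point.
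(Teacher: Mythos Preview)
Your tightness argument is essentially identical to the paper's: both invoke the semi-discrete PDE \eqref{semi discrete pde with error term} to express $\int_{[0,t]\times[0,\infty)}\varphi\,d\eta^\epsilon$, for $\varphi(x)=xf(x)$ with $f\in C_c^\infty$, as a sum of quantities already known to be tight.

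For the continuity statement the two arguments use the same ingredients but are organized differently. The paper first observes that the process $X^\epsilon_t:=\int_{[0,t]\times[0,\infty)}\varphi\,d\eta^\epsilon$ is itself $C$-tight (being, via the same PDE identity, a sum of $C$-tight processes), extracts a joint limit $(\eta^\epsilon,X^\epsilon)\to(\eta,X)$ with $X$ continuous, and then identifies $X_t$ with $\int_{[0,t]\times[0,\infty)}\varphi\,d\eta$ by precisely the sandwich you describe (smooth cutoffs $\chi_{t-2\delta,t-\delta}\le 1_{[0,t]}\le\chi_{t+\delta,t+2\delta}$), followed by $\delta\downarrow 0$. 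Packaging the continuity into $C$-tightness up front spares the paper your separate bounded-variation/density extension from non-atom times to all times. Conversely, your route makes the limiting weak formulation explicit already here, whereas the paper re-derives it later in the proof of Theorem~\ref{main theorem}.

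Two minor points in your write-up. First, the sandwich inequality $\int \psi^- x\tilde\varphi\,d\eta^\epsilon\le \int_0^t\!\int\varphi\,d\eta^\epsilon\le \int \psi^+ x\tilde\varphi\,d\eta^\epsilon$ requires $\tilde\varphi\ge 0$; for signed $\varphi$ you should first reduce to the non-negative case by approximation, as the paper does explicitly. Second, the condition ``$\eta(\{t\}\times[0,\infty))=0$'' is not well-posed for elements of $\mathbb{M}$ (only $x\,\eta(ds,dx)$ is locally finite); the correct hypothesis is $\int_{\{t\}\times(0,\infty)}x\,|\tilde\varphi(x)|\,\eta(ds,dx)=0$, which indeed holds for all but countably many $t$.
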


\begin{proof} By the tightness criterion from Lemma \ref{tightness criterion random measures} in Appendix, it suffices to prove that for all $t \ge 0$ and $\varphi$ such that $\varphi (x)=x f(x)$ for all $x \geq 0$ with $f \in C^\infty_c( [0,\infty))$, the sequence of real valued random variables
	\begin{equation*}
		\left( \Int{[0,\infty) \times [0,\infty)}{} \varphi(x) \: \eta^\epsilon(ds,dx) \right)_{\epsilon \in (0,1]}
	\end{equation*}
	is tight.
	For $\epsilon \in (0,1]$ and $\varphi$ as above, by the semi-discrete PDE \eqref{semi discrete pde with error term}
	\begin{multline}
	\Int{[0,t] \times [0,\infty)}{} \varphi(x) \: d\eta^\epsilon(ds,dx) = \langle h^\epsilon_t, \varphi \rangle -\langle h^\epsilon_0, \varphi \rangle  
	- \Int{0}{t} \langle h^\epsilon_s, \varphi ''  \rangle \: ds
	- \sqrt{2} W^\epsilon_t(\varphi) + R^\epsilon_t(\varphi)
	\end{multline}
	By Theorem \ref{tightness of h}, the first three terms are tight. Tightness of $(R_{t}^\epsilon(\varphi))_{\epsilon \in (0,1]}$ is a consequence of Lemma \ref{control of the error term} while tightness of $(W_t^\epsilon(\varphi))_{\epsilon \in (0,1]}$ is a consequence of Lemma \ref{functional convergence in law of the martingale}. We thus deduce that the l.h.s.~is tight and this concludes the proof of the first part of the statement.\\
	We now turn to the second part of the statement. Let $\eta$ be the limit of a converging subsequence: for simplicity, we still write $(\eta^\epsilon)_{\epsilon}$ this subsequence. By an approximation argument, it is sufficient to prove that for any given $\varphi \in C^\infty_c([0,\infty))$ which satisfies $\varphi(0) = 0$ and is \emph{non-negative}, almost surely $t\mapsto \int_{[0,t] \times [0,\infty)} \varphi(x) \: \eta(ds,dx)$ is continuous on $[0,\infty)$. Therefore we fix such a $\varphi$ until the end of the proof. Let us define
	$$ X^{\epsilon}_t := \int_{[0,t] \times [0,\infty)} \varphi(x) \: \eta^\epsilon(ds,dx)\;,\quad t\ge 0\;.$$
	The arguments above actually showed that $(X^{\epsilon}_t, t\ge 0)_{\epsilon \in (0,1]}$ is $C$-tight. Up to an extraction, we can thus assume that $(\eta^\epsilon,X^\epsilon)$ converges in law to $(\eta,X)$ where $X$ is continuous.\\
	For any $0 \le a \le b$, let $\chi_{a,b}:\R\to [0,1]$ be a smooth function satisfying:
	$$ 1{\{[0,a]\}}(t) \le \chi_{a,b}(t) \le  1{\{[0,b]\}}(t)\;,\quad t\ge 0\;.$$
	We now write for any $t\ge 0$ and $\delta > 0$ (small enough)
	$$ X^\epsilon_{t-2\delta} \le \int_{[0,\infty) \times [0,\infty)} \chi_{t-2\delta,t-\delta}(s) \varphi(x) \: \eta^\epsilon(ds,dx) \le \int_{[0,\infty) \times [0,\infty)} \chi_{t+\delta,t+2\delta}(s) \varphi(x) \: \eta^\epsilon(ds,dx) \le X^\epsilon_{t+2\delta}\;.$$
	Passing to the limit along the subsequence we obtain
	$$ X_{t-2\delta} \le \int_{[0,\infty) \times [0,\infty)} \chi_{t-2\delta,t-\delta}(s) \varphi(x) \: \eta(ds,dx) \le \int_{[0,\infty) \times [0,\infty)} \chi_{t+\delta,t+2\delta}(s) \varphi(x) \: \eta(ds,dx) \le X_{t+2\delta}\;.$$
	Now observe that
	\begin{align*}
		X_{t-2\delta} \le \int_{[0,\infty) \times [0,\infty)} \chi_{t-2\delta,t-\delta}(s) \varphi(x) \: \eta(ds,dx) &\le \int_{[0,t] \times [0,\infty)} \varphi(x) \: \eta(ds,dx)\\
		&\le \int_{[0,\infty) \times [0,\infty)} \chi_{t+\delta,t+2\delta}(s) \varphi(x) \: \eta(ds,dx) \le X_{t+2\delta}\;,
	\end{align*}
	so that, passing to the limit $\delta \downarrow 0$, we deduce that for any $t\ge 0$, almost surely
	$$ X_t = \int_{[0,t] \times [0,\infty)} \varphi(x) \: \eta(ds,dx)\;.$$
	Since $X$ is continuous, and the process on the r.h.s.~is càdlàg, this equality holds almost surely for all $t\ge 0$, and the asserted continuity follows.
\end{proof}

\section{Proof of the main results}

\begin{proof}[Proof of Theorem \ref{main theorem}]
	By Theorems \ref{tightness of h}, \ref{convergence of xi} and \ref{tightness of eta}, we know that $(h^\epsilon, W^\epsilon, \eta^\epsilon)_{\epsilon \in (0,1]}$ is tight in $D([0,\infty), \mathcal{C}_\rho) \times D([0,\infty), \mathcal{S}'([0,\infty) ) \times \mathbb{M}$. We have to identify the law of the limit points. Let $(u,W,\eta)$ be the limit of a converging subsequence. In order to alleviate the notations, we write $(h^\epsilon, W^\epsilon, \eta^\epsilon)_{\epsilon \in (0,1]}$ the subsequence. By Theorem \ref{convergence of xi}, we already know that $W$ is a cylindrical Wiener process. We will check that $(u,\eta)$ satisfy the conditions listed in Definition \ref{definition de l'eq reflechie}, and will conclude using the strong uniqueness for this stochastic PDE. Items (i), (ii) and (iii) are automatically satisfied by any elements of our spaces. Let us check the last two items.
	
	\paragraph{Item (iv) - Limiting equation}
	Fix $t > 0$ and $\varphi \in C^\infty_c([0,\infty))$ such that $\varphi(0)=0$. Consider the map
	\begin{equation*}
		\begin{array}{ll}
			F_{t,\varphi}: &D([0,\infty), \mathcal{C}_\rho) \times D([0,\infty), \mathcal{S}'([0,\infty) ) \times \mathbb{M}  \longrightarrow \R  \\
			&(h,V,\nu) \longmapsto
			\langle h_t, \varphi  \rangle -\langle h_0, \varphi \rangle
			- \Int{0}{t} \langle h_s , \varphi'' \rangle \: ds 
			 - V_t(\varphi)
			 - \Int{0}{t} \Int{0}{\infty} \varphi(s,x) \: \nu (ds,dx).
		\end{array}  
	\end{equation*}
	Consider also the space
        \begin{equation*}
            \tilde{\mathbb{M}}:= \left\lbrace \nu \in \mathbb{M} \: : \: \forall t \in [0,\infty) \quad \nu \left( \{ t \} \times (0,\infty) \right) = 0 \right\rbrace.
        \end{equation*}
        Then $F$ restricted to $C([0,\infty), \mathcal{C}_\rho) \times C([0,\infty), \mathcal{S}'([0,\infty) ) \times \tilde{\mathbb{M}}$ is continuous. Additionally, it follows from Theorems \ref{tightness of h}, \ref{convergence of xi} and \ref{tightness of eta} that the law of $(u,W,\eta)$ is concentrated on $C([0,\infty), \mathcal{C}_\rho) \times C([0,\infty), \mathcal{S}'([0,\infty) ) \times \tilde{\mathbb{M}}$. Therefore, by the continuous mapping theorem
	\begin{equation} \label{limit eq 1}
		F_{t,\varphi} (h^\epsilon,W^\epsilon, \eta^\epsilon) \xrightarrow[\epsilon \to 0]{\mathcal{L}} F(u,W , \eta)
	\end{equation}
	On the other hand, the semi-discrete PDE \eqref{variational semimartingale equation} tells us that for all $\epsilon \in        (0,1]$
	\begin{equation*}
		F_{t,\varphi}(h^\epsilon, W^\epsilon, \eta^\epsilon) +R_t^\epsilon(\varphi)=0
	\end{equation*}
	but by Lemma \ref{control of the error term}, $R_t^\epsilon(\varphi) \xrightarrow[\epsilon \to 0]{\mathcal{L}} 0$, so by        Slutsky's theorem
	\begin{equation} \label{limit eq 2}
		F_{t,\varphi}(h^\epsilon, W^\epsilon, \eta^\epsilon) \xrightarrow[\epsilon \to 0]{} 0
	\end{equation}
	Now by \eqref{limit eq 1} and \eqref{limit eq 2} and uniqueness of the limit we deduce that 
	\begin{equation}
		F_{t, \varphi} (u, W, \eta) = 0
	\end{equation}
	which concludes the proof of (iv).
	
	\paragraph{Item (v) - Support condition}
	Let $\psi \in C^\infty_c([0,\infty)\times [0,\infty))$ be a non-negative function, and let $T$ be such that $\textrm{supp }(\psi) \subseteq [0,T] \times [0,\infty)$. Consider the map
	\begin{equation*}
		\begin{array}{rl}
			F :{D}([0,\infty),\mathcal{C}_\rho) \times \mathbb{M}& \longrightarrow \R \\
			(h,m)&\longmapsto \Int{[0,\infty) \times [0,\infty)}{} x\psi(s,x) h(s,x) \: m (ds,dx).
		\end{array}
	\end{equation*}
	Then $F$ restricted to ${C}([0,\infty),\mathcal{C}_\rho) \times \mathbb{M}$ is continuous with respect to the product topology. Additionnally, it follows from Theorem \ref{tightness of h} that the law of $(u,\eta)$ is concentrated on ${C}([0,\infty),\mathcal{C}_\rho) \times \mathbb{M}$. Consequently, by the continuous mapping theorem
	\begin{equation} \label{1 pour support condition}
		F(h^\epsilon,\eta^\epsilon) \xrightarrow[\epsilon \to 0]{\mathcal{L}} F(u,\eta)\;.
	\end{equation}
	But we also have
	\begin{equation} \label{encadrement pour support condition}
		0 \leq F(h^\epsilon,\eta^\epsilon) = \sqrt{\epsilon} \Int{[0,\infty) \times [0,\infty)}{} x \psi (s,x) \eta^\epsilon (ds,dx)
	\end{equation}
	since by definition of the discrete reflection measure $\eta^\epsilon$, $h^\epsilon$ is equal to $\sqrt{\epsilon}$ on the support of $\eta^\epsilon$. Because the sequence $\left( \Int{[0,\infty) \times [0,\infty)}{} x\psi (s,x) \eta^\epsilon (ds,dx) \right)_{\epsilon \in (0,1]}$ converges in law towards an almost-surely finite random variable, we obtain that the right hand side of \eqref{encadrement pour support condition} converges in distribution towards zero. Thus, we have
	\begin{equation} \label{2 pour support condition}
		F(h^\epsilon,\eta^\epsilon) \xrightarrow[\epsilon \to 0]{\mathcal{L}} 0
	\end{equation}
	From \eqref{1 pour support condition} and \eqref{2 pour support condition} we deduce that for all non-negative $\psi \in C^\infty_c([0,\infty)\times [0,\infty))$,
	\begin{equation}
		\Int{[0,\infty) \times [0,\infty)}{} x\psi(s,x) u(s,x) \: \eta (ds,dx) =0
	\end{equation}
	almost surely. By the Monotone Convergence Theorem, this suffices to deduce that almost surely
	$$\Int{[0,\infty) \times [0,\infty)}{} u(s,x) \: \eta (ds,dx) =0\;,$$
	concluding the proof of (v).
\end{proof}

\begin{proof}[Proof of Corollary \ref{Corollary}]
	From the convergence of Theorem \ref{main theorem} and since $h^\epsilon$ is stationary with law $\pi^\epsilon$, we deduce that $u$ is a solution of \eqref{Nualar Pardoux eq} which is stationary. At each time $t$, the law of $u(t,\cdot)$ is the limit of the laws $\pi^\epsilon$, which by~\cite{bryndoney} is nothing but the law of the $3$-dimensional Bessel process.
\end{proof}


\appendix

\section{Piecewise linear interpolation on \texorpdfstring{$\epsilon \N$}{TEXT}}
\subsection{Fourier transform}
\noindent Let $g: \epsilon \N \xrightarrow[]{} \R$ such that $g(0)=0$ and let us still write $g$ for its piecewise linear interpolation in space, which we assume integrable. By definition of the Fourier transform, for $\zeta \in \R$
\begin{equation}
    \hat{g}(\zeta)= \Int{[0,\infty)}{} g(x) e^{-i \zeta x} \: dx
\end{equation}
The following lemma gives an expression which is simply a convenient rewriting for the Fourier transform, leveraging the fact that $g$ is piecewise affine.
\begin{lemma} \label{lemma expression fourrier coefficients}
    For any $\zeta \in \R$, and any $\epsilon \in (0,1]$ we have
    \begin{equation}
    \hat{g}(\zeta) = c_{\zeta,\epsilon} \sum_{n \in \epsilon \N}{} e^{-i\zeta n} g(n)
\end{equation}
with 
\begin{equation} \label{def and bound for coef c}
    c_{\zeta,\epsilon}:= \f{2}{\epsilon \zeta^2} (1- \cos (\zeta \epsilon)) \in [0,\epsilon]
\end{equation}
\end{lemma}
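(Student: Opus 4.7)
The plan is to decompose $g$ as a superposition of elementary piecewise-affine "tent" functions supported on pairs of adjacent intervals of the lattice, then compute the Fourier transform of a single tent explicitly. Set $\tau(x) := \max(1 - |x|/\epsilon, 0)$, the symmetric tent of height $1$ and base $[-\epsilon,\epsilon]$. Since $g$ is piecewise affine with breakpoints on $\epsilon \N$, vanishes at $0$, and is extended by $0$ on $(-\infty,0)$, one verifies directly (by evaluating at each lattice node and using affine interpolation on each interval $[n\epsilon,(n+1)\epsilon]$) the identity
\begin{equation*}
g(x) = \sum_{n \geq 1} g(n\epsilon) \, \tau(x - n\epsilon)\;, \quad x \in \R\;.
\end{equation*}
The condition $g(0) = 0$ is what removes the potentially missing half-tent at the boundary; this is really the only point requiring attention.

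By linearity and the translation rule for the Fourier transform, this gives
\begin{equation*}
\hat{g}(\zeta) = \hat{\tau}(\zeta) \sum_{n \in \epsilon \N} e^{-i\zeta n} g(n)\;.
\end{equation*}
It then suffices to compute $\hat{\tau}(\zeta)$. Using the evenness of $\tau$ and a short integration by parts,
\begin{equation*}
\hat{\tau}(\zeta) = 2 \int_0^\epsilon \Bigl(1 - \tfrac{x}{\epsilon}\Bigr) \cos(\zeta x)\, dx = \frac{2}{\epsilon \zeta} \int_0^\epsilon \sin(\zeta x)\, dx = \frac{2 (1-\cos(\zeta\epsilon))}{\epsilon \zeta^2} = c_{\zeta,\epsilon}\;,
\end{equation*}
which establishes the claimed identity.

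For the bound, I would use $1 - \cos\theta = 2 \sin^2(\theta/2)$, which gives $c_{\zeta,\epsilon} = \tfrac{4 \sin^2(\zeta\epsilon/2)}{\epsilon \zeta^2} \geq 0$, and the elementary inequality $|\sin(\theta/2)| \leq |\theta|/2$, which yields $c_{\zeta,\epsilon} \leq \epsilon$. The cases $\zeta = 0$ (where the expression is understood by continuous extension, giving $\epsilon$) cause no trouble. There is no genuine obstacle here; the computation is essentially a Fourier transform of a triangular function and the only subtle point is the correct bookkeeping of the boundary term at $x = 0$ via the hypothesis $g(0) = 0$.
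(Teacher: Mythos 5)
Your proof is correct, and it takes a genuinely different route from the paper's. The paper computes $\hat g(\zeta)$ directly by splitting $\int_0^\infty g(x)e^{-i\zeta x}\,dx$ into the intervals $[n,n+\epsilon]$, substituting $x = n + \lambda\epsilon$, expanding $g(n+\lambda\epsilon) = g(n) + \lambda(g(n+\epsilon)-g(n))$, and then collecting coefficients via an Abel-summation step; the coefficients $a_{\zeta,\epsilon} = \int_0^1 e^{-i\zeta\lambda\epsilon}\,d\lambda$ and $b_{\zeta,\epsilon} = \int_0^1 \lambda e^{-i\zeta\lambda\epsilon}\,d\lambda$ are computed explicitly and combined into $c_{\zeta,\epsilon}$. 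You instead recognize the structural identity $g = \sum_{n\ge 1} g(n\epsilon)\,\tau(\cdot - n\epsilon)$, where $\tau$ is the unit tent on $[-\epsilon,\epsilon]$, and reduce everything to the single classical computation $\hat\tau(\zeta) = 2(1-\cos(\zeta\epsilon))/(\epsilon\zeta^2)$ plus the translation rule. Your route is arguably cleaner: it isolates the role of the hypothesis $g(0)=0$ (it is exactly what makes the half-tent at the origin disappear, so no boundary correction is needed), it avoids the Abel-summation bookkeeping of the paper, and it makes the sign and magnitude of $c_{\zeta,\epsilon}$ transparent, since $\hat\tau \ge 0$ is a Fej\'er-type kernel. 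You also verify the bound $c_{\zeta,\epsilon}\in[0,\epsilon]$ via $1-\cos\theta = 2\sin^2(\theta/2)$ and $|\sin(\theta/2)|\le|\theta|/2$, a step the paper's proof states in the lemma but does not actually carry out. One small point of care that you did handle correctly: the interchange of the infinite sum with the Fourier integral is justified by the assumed integrability of $g$ together with the locally finite overlap of the shifted tents.
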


\begin{proof} We have
\begin{align*}
    \hat{g}(\zeta) &= \Int{\R}{} g(x) e^{-i \zeta x} \: dx \\
    &=\sum_{n \in \epsilon \N}{} \; \Int{n}{n+\epsilon} g(x) e^{-i\zeta x} \: dx \\
    &=\epsilon \sum_{n \in \epsilon \N} \Int{0}{1} g(n+\lambda \epsilon) e^{-i \zeta (n+\lambda \epsilon)} \: d \lambda \\
    &=\epsilon \sum_{n \in \epsilon \N} e^{-i \zeta n} \Int{0}{1} \left[ g(n)+ \lambda (g(n+\epsilon)-g(n)) \right] e^{-i \zeta \lambda \epsilon} \: d \lambda \\
\end{align*}
    Set $a_{\zeta,\epsilon}:=\Int{0}{1} e^{-i \zeta \lambda \epsilon} \: d \lambda$ and $b_{\zeta,\epsilon}:=\Int{0}{1} \lambda e^{-i \zeta \lambda \epsilon} \: d \lambda $, then
\begin{align*}
    \hat{g}(\zeta) &=\epsilon \sum_{n \in \epsilon \N} e^{-i \zeta n} \left[  a_{\zeta ,\epsilon} g(n) + b_{\zeta ,\epsilon} \left( g(n+\epsilon)-g(n) \right) \right] \\
    &=\epsilon \sum_{n \in \epsilon \N} e^{-i \zeta n} \left[ a_{\zeta ,\epsilon} + b_{\zeta ,\epsilon} (e^{i \zeta \epsilon } -1)  \right] g(n) \\    
\end{align*}
Moreover, a direct computation yields $a_{\zeta, \epsilon}=\f{i e^{-i \zeta \epsilon} -i}{ \zeta \epsilon} $ and $b_{\zeta ,\epsilon}=\f{i e^{-i\zeta \epsilon}}{\zeta \epsilon} + \f{e^{-i \zeta \epsilon}-1}{\zeta^2 \epsilon^2}$.
Consequently, setting $c_{\zeta,\epsilon} :=\epsilon ( a_{\zeta ,\epsilon} + b_{\zeta ,\epsilon} (e^{i \zeta \epsilon } -1) )$ yields the result.
\end{proof}

\subsection{Proof of Lemma \ref{control of the error term}} \label{proof control error term}

\begin{lemma} For every $A >0$ and every $T \geq 0$
    \begin{equation} \label{uniform bound expectation of the norm sup of h}
        \displaystyle{\sup_{\epsilon \in (0,1] } \mathbb{E} \left[ \displaystyle{\sup_{t \in [0,T]}} \norme{h^\epsilon_t}_{\infty, [0,A]} \right]} < \infty
    \end{equation}
\end{lemma}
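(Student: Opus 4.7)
The plan is to reduce the statement to a uniform control of the supremum in $t$ of $\norme{g^\epsilon_t}_\infty$, and then to invoke the moment estimates established in Section 3. Since $h^\epsilon_t(x) = e^{\rho x} g^\epsilon_t(x)$, one has
\begin{equation*}
	\norme{h^\epsilon_t}_{\infty, [0,A]} \leq e^{\rho A} \norme{g^\epsilon_t}_\infty,
\end{equation*}
so it is enough to show that $\displaystyle{\sup_{\epsilon \in (0,1]}}\,\mathbb{E}\big[\sup_{t \in [0,T]} \norme{g^\epsilon_t}_\infty\big] < \infty$.

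To this end, I would use the same triangle-inequality decomposition as in \eqref{Eq:Holder}, splitting
\begin{equation*}
	\sup_{t \in [0,T]} \norme{g^\epsilon_t}_\infty \leq \norme{g^\epsilon_0}_\infty + \sup_{t \in [0,T]} \norme{g^\epsilon_t - \bar{g}^\epsilon_t}_\infty + \sup_{t \in [0,T]} \norme{\bar{g}^\epsilon_t - \bar{g}^\epsilon_0}_\infty,
\end{equation*}
where I have used $\bar{g}^\epsilon_0 = g^\epsilon_0$. Each of the three expectations is then bounded uniformly in $\epsilon$. The linearization error is handled by Lemma \ref{estimation of the linearization error} applied with $p = 1$. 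For the third term, the continuous embedding $C^b \hookrightarrow L^\infty$ combined with \eqref{estimate after kolmogorov} and Jensen's inequality yields
\begin{equation*}
	\mathbb{E}\Big[\sup_{t \in [0,T]} \norme{\bar{g}^\epsilon_t - \bar{g}^\epsilon_0}_\infty\Big] \leq T^\alpha\, \mathbb{E}\Big[\sup_{s \neq t \in [0,T]}\, \f{\norme{\bar{g}^\epsilon_t - \bar{g}^\epsilon_s}_{C^b}}{|t-s|^\alpha}\Big],
\end{equation*}
and the right hand side is uniformly bounded in $\epsilon$.

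The remaining ingredient is the static uniform bound $\sup_\epsilon \mathbb{E}[\norme{g^\epsilon_0}_\infty] < \infty$. The proof of Lemma \ref{estimation positive sobolev} in fact establishes the stronger bound $\sup_{\epsilon \in (0,1]} \mathbb{E}[\norme{g^\epsilon_0}_{\mathcal{W}^{s_1, r}}^p]^{1/p} < \infty$ for every $s_1 \in (0, 1/2)$ and every $r, p > 1$. Choosing $s_1$ and $r$ such that $s_1 > 1/r$, the Morrey embedding $\mathcal{W}^{s_1, r}(\R) \hookrightarrow L^\infty(\R)$ gives the desired static control, and combining the three pieces closes the argument.

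The mildly delicate point is precisely this last static estimate: since $\mathbb{E}[h^\epsilon_0(x)^2]$ grows linearly in $x$ by \eqref{Eq:akbk}, the exponential weight $e^{-\rho x}$ is essential in order to make the $L^\infty$ norm of $g^\epsilon_0$ integrable on the unbounded half-line. In this sense, the bulk of the work for the present lemma has already been carried out in Lemma \ref{estimation positive sobolev}, and no additional new estimate is required.
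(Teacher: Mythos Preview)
Your proof is correct and follows the same overall approach as the paper: the dynamic part is handled via the decomposition of \eqref{Eq:Holder} (linearization error plus the Kolmogorov estimate \eqref{estimate after kolmogorov}), and the static part reduces to a uniform moment bound on the initial condition. The only minor difference is in how you obtain the static estimate: the paper applies the Kolmogorov Continuity Theorem directly to $h^\epsilon_0$ on the compact interval $[0,A]$ using the increment bounds \eqref{Eq:akbk}, whereas you pass to $g^\epsilon_0$ on the full line and invoke the Sobolev bound from the proof of Lemma~\ref{estimation positive sobolev} together with a Morrey embedding. Both routes are valid; the paper's is slightly more direct since working on $[0,A]$ avoids the need for the exponential weight altogether.
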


\begin{proof}
    This is a direct consequence of \eqref{Eq:Holder} and of the estimates \eqref{Eq:akbk} that ensure, with the help of Kolmogorov Continuity Theorem, that the moments of $\norme{h^\epsilon}_{\infty, [0,A]}$ under $(\pi^\epsilon)_{\epsilon \in (0,1]}$ are uniformly bounded in $\epsilon$.
\end{proof}

\begin{lemma} \label{error terms 1 2 3}For every $\varphi \in C^\infty_c( [0,\infty))$ such that $\varphi(0) = 0$ and $T \geq 0$
    \begin{enumerate}[(i)]
    \item $\mathbb{E} \left[ \displaystyle{\sup_{t \in [0,T]}}\abs{\langle h^\epsilon_t,\varphi \rangle_\epsilon - \langle h^\epsilon_t,\varphi \rangle} \right] \xrightarrow[\epsilon \to 0]{} 0$ \\
    \item $\mathbb{E} \left[ \displaystyle{\sup_{t \in [0,T]}}\abs{\epsilon \sum_{x \in \epsilon \N}{} \Int{0}{t} \f{1}{\epsilon^2} \Delta^\epsilon h^\epsilon_s (x) \varphi (x) \: ds -  \Int{0}{t} \langle h^\epsilon_s, \varphi '' \rangle \: ds} \right] \xrightarrow[\epsilon \to 0]{} 0$
    \end{enumerate}
\end{lemma}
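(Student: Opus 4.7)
The plan is to reduce both statements to a uniform bound of order $\epsilon \norme{h^\epsilon_t}_{\infty,[0,A+1]}$, for some $A > 0$ containing $\textrm{supp}(\varphi)$, which will then be absorbed via the moment bound \eqref{uniform bound expectation of the norm sup of h}. Two elementary ingredients are used repeatedly: the piecewise linearity of $h^\epsilon_t$ on $\epsilon\N$ (forcing its variation on a cell to be at most $2\sqrt\epsilon$), and the smoothness and compact support of $\varphi$.

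For $(i)$, I would work cell by cell: on each $[n\epsilon,(n+1)\epsilon]$, a Taylor expansion of $\varphi$ combined with the piecewise linear structure of $h^\epsilon_t$ gives
\[
\Int{n\epsilon}{(n+1)\epsilon} h^\epsilon_t(y) \varphi(y) \, dy = \epsilon h^\epsilon_t(n\epsilon) \varphi(n\epsilon) + \tfrac12 \epsilon \bigl(h^\epsilon_t((n+1)\epsilon) - h^\epsilon_t(n\epsilon)\bigr) \varphi(n\epsilon) + O\bigl(\epsilon^2 \norme{h^\epsilon_t}_{\infty, [0,A+1]}\bigr).
\]
Summing over the $O(\epsilon^{-1})$ cells intersecting $\textrm{supp}(\varphi)$, the higher-order remainder already contributes at most $O(\epsilon \norme{h^\epsilon_t}_{\infty,[0,A+1]})$. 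The middle term, which is linear in the increments of $h^\epsilon_t$, can be handled by a single discrete summation by parts that gains one factor of $\epsilon$ from $\norme{\varphi'}_{\infty}$, equivalently by regrouping with neighbouring cells to recover a trapezoidal-type bound. Taking $\sup_{t \in [0,T]}$ and expectation, and applying \eqref{uniform bound expectation of the norm sup of h}, concludes $(i)$.

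For $(ii)$, the main step is a discrete integration by parts transferring the Laplacian from $h^\epsilon_s$ to $\varphi$:
\[
\epsilon \sum_{x \in \epsilon\N^*} \f{\Delta^\epsilon h^\epsilon_s(x)}{\epsilon^2} \varphi(x) = \epsilon \sum_{x \in \epsilon\N^*} h^\epsilon_s(x) \f{\Delta^\epsilon \varphi(x)}{\epsilon^2} + \f{1}{\epsilon}\bigl(h^\epsilon_s(0) \varphi(\epsilon) - h^\epsilon_s(\epsilon) \varphi(0)\bigr).
\]
Crucially, the boundary contribution vanishes thanks to the combined conditions $\varphi(0) = 0$ (from the hypothesis) and $h^\epsilon_s(0) = 0$ (from the state space $\mathcal{X}$). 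A fourth-order Taylor expansion then yields $|\Delta^\epsilon \varphi(x)/\epsilon^2 - \varphi''(x)| = O(\epsilon^2 \norme{\varphi^{(4)}}_{\infty})$ uniformly on $\epsilon\N$, and a second application of the cell-by-cell Riemann-sum estimate from $(i)$, now with $\varphi''$ in place of $\varphi$, reduces the remaining difference to $O(\epsilon \norme{h^\epsilon_s}_{\infty,[0,A+1]})$. Integrating in time, taking the supremum over $t \in [0,T]$, and invoking \eqref{uniform bound expectation of the norm sup of h} once more closes the argument.

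The main obstacle, though slight, is verifying the vanishing of the $1/\epsilon$-prefactor boundary contribution in the summation by parts of $(ii)$: since $h^\epsilon_s(\epsilon)$ is only of order $\sqrt\epsilon$, the apparent $1/\epsilon$ factor would blow up if $\varphi(0) \neq 0$. This is the one place where both the hypothesis $\varphi(0)=0$ and the pinning $h^\epsilon(0)=0$ are genuinely needed. Once this is in hand, the remainder of the proof is a routine assembly of discrete-to-continuous approximation estimates, made quantitative enough to combine with the uniform sup-norm moment bound.
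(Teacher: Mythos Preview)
Your proposal is correct and follows essentially the same route as the paper: reduce each difference to a bound of order $\epsilon \norme{h^\epsilon_t}_{\infty,[0,A]}$ via Taylor expansion and the piecewise-linear structure, then invoke \eqref{uniform bound expectation of the norm sup of h}. The only cosmetic difference is that for $(i)$ the paper uses the hat-function representation $h^\epsilon_t(y)=\sum_{x\in\epsilon\N} h^\epsilon_t(x)\,\Lambda_x(y)$ (so that $\int h^\epsilon_t\varphi - \epsilon\sum h^\epsilon_t(x)\varphi(x)$ becomes a sum of integrals of $\varphi(y)-\varphi(x)$ against nonnegative weights), which produces the bound $\epsilon A\norme{h^\epsilon_t}_{\infty}\norme{\varphi'}_\infty$ directly without your intermediate summation-by-parts on the increment term; similarly for $(ii)$ it compares $\varphi''(y)$ with $\epsilon^{-2}\Delta^\epsilon\varphi(x)$ in one step (hence $\norme{\varphi^{(3)}}_\infty$ rather than your $\norme{\varphi^{(4)}}_\infty$). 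Your explicit treatment of the boundary contribution in the discrete integration by parts, and the observation that both $\varphi(0)=0$ and $h^\epsilon_s(0)=0$ are needed there, is a point the paper leaves implicit.
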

\begin{proof} Let $\varphi \in C^\infty_c([0,\infty))$, such that $\varphi(0)=0$ and $A>0$ such that $\textrm{supp}(\varphi) \subseteq [0,A]$. Using the fact that $h_t^\epsilon$ is piecewise affine on the lattice $\epsilon \N$, we have
    \begin{align*}
        &\abs{\langle h^\epsilon_t,\varphi \rangle_\epsilon - \langle h^\epsilon_t,\varphi \rangle} \\
        &=\abs{\Int{0}{\infty} h^\epsilon_t(y) \varphi(y) \: dy - \epsilon \sum_{x \in \epsilon \N}{} h_t^\epsilon(x) \varphi(x) } \\
        &=\abs{\sum_{x \in \epsilon \N}{} h^\epsilon_t(x) \left( \Int{x}{x+\epsilon} \f{x+\epsilon-y}{\epsilon} \varphi (y) \: dy + \Int{x-\epsilon}{x} \f{y-x+\epsilon}{\epsilon} \varphi (y) \: dy - \epsilon \varphi(x)  \right)} \\
        &=\abs{\sum_{x \in \epsilon \N}{} h^\epsilon_t(x) \left( \Int{x}{x+\epsilon}\f{x+\epsilon-y}{\epsilon} (\varphi (y) - \varphi(x)) \: dy + \Int{x-\epsilon}{x} \f{y-x+\epsilon}{\epsilon}  (\varphi (y)- \varphi (x)) \: dy  \right)} \\
        &\leq  \epsilon A \norme{h^\epsilon_t}_{\infty, [0,A]} \norme{ \varphi '}_{\infty} 
    \end{align*}
    where we used the mean value theorem in the last line. This enables us to conclude for $(i)$ using \eqref{uniform bound expectation of the norm sup of h}.
    Let us turn to the proof of $(ii)$. For fixed $s \in [0,t]$, we have
    \begin{align*}
        &\abs{\epsilon \sum_{x \in \epsilon \N}{} \f{1}{\epsilon^2} \Delta^\epsilon h^\epsilon_s (x) \varphi (x) -  \langle h^\epsilon_s,  \varphi '' \rangle} \\
        & \quad = \abs{ \Int{0}{\infty} h^\epsilon_s(y)  \varphi '' (y) \: dy - \epsilon \sum_{x \in \epsilon \N}{}  h^\epsilon_s(x) \f{1}{\epsilon^2}  \Delta^\epsilon \varphi (x)} \\
        & \quad =\abs{\sum_{x \in \epsilon \N}{} h^\epsilon_s(x) \left( \Int{x}{x+\epsilon} \f{x+\epsilon-y}{\epsilon} \varphi '' (y) \: dy + \Int{x-\epsilon}{x} \f{y-x+\epsilon}{\epsilon} \varphi '' (y) \: dy  -  \epsilon \f{1}{\epsilon^2}  \Delta^\epsilon \varphi (x) \right)} \\
        &\quad = \bigg|\sum_{x \in \epsilon \N}{} h^\epsilon_s(x) \Big( \Int{x}{x+\epsilon} \f{x+\epsilon-y}{\epsilon} (\varphi '' (y)- \f{1}{\epsilon^2} \Delta^\epsilon \varphi(x)) \: dy
        	+ \Int{x-\epsilon}{x} \f{y-x+\epsilon}{\epsilon}  ( \varphi '' (y)-\f{1}{\epsilon^2} \Delta^\epsilon \varphi(x))  \: dy \Big) \bigg| \\
        &\quad \leq \epsilon A \norme{h^\epsilon_t}_{\infty,[0,A]} \norme{ \varphi ^{(3)}}_{\infty, [0,A]}.
    \end{align*}
    The last line is obtained thanks to the mean value theorem. As for the first point, \eqref{uniform bound expectation of the norm sup of h} enables us to conclude.
\end{proof}
\noindent Eventually, Lemma \ref{control of the error term} is a consequence of Lemma \ref{error terms 1 2 3}.

\section{Tightness criterion for random measures}

\begin{lemma} \label{tightness criterion random measures}
    Let $(\eta^\epsilon)_{\epsilon \in (0,1]}$ be a family of random elements of $\mathbb{M}$ and assume that for all $f \in C^\infty_c( [0,\infty))$, and $t \ge 0$ the family of real valued random variables $\left( \Int{[0,t] \times [0,\infty)}{} x f(x) \: \eta^\epsilon (dt,dx) \right)_{\epsilon \in (0,1]}$ is tight. Then the family $(\eta^\epsilon)_{\epsilon \in (0,1]}$ is tight in $\mathbb{M}$.
\end{lemma}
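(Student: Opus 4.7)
The plan is to change variables from $\eta^\epsilon$ to a related random Radon measure $\mu^\epsilon$, and then invoke a standard Kallenberg-type tightness criterion in the vague topology. First I would set $\mu^\epsilon(ds,dx) := x\, \eta^\epsilon(ds,dx)$. The defining property of $\mathbb{M}$ ensures $\mu^\epsilon([0,T]\times[0,A])<\infty$ for every $T,A\geq 0$, so $\mu^\epsilon$ is a (random) non-negative Radon measure on the locally compact Polish space $[0,\infty)\times[0,\infty)$. By construction, the topology chosen on $\mathbb{M}$ is exactly the pullback under $\eta\mapsto x\eta$ of the vague topology on Radon measures. Consequently, tightness of $(\eta^\epsilon)$ in $\mathbb{M}$ is equivalent to tightness of $(\mu^\epsilon)$ in the vague topology.

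Next I would appeal to the classical tightness criterion for random measures on a locally compact Polish space (see e.g.~Kallenberg, \emph{Random Measures}): $(\mu^\epsilon)_{\epsilon\in(0,1]}$ is tight in the vague topology if and only if, for every non-negative $\psi\in C_c([0,\infty)\times[0,\infty))$, the real-valued random variables $(\mu^\epsilon(\psi))_{\epsilon\in(0,1]}$ form a tight family. The problem therefore reduces to checking this condition from the hypothesis of the lemma.

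For such a $\psi$, I would pick $T,A>0$ with $\mathrm{supp}\,\psi \subseteq [0,T]\times[0,A]$, together with a non-negative cutoff $f\in C^\infty_c([0,\infty))$ satisfying $f\geq 1$ on $[0,A]$. Since $\eta^\epsilon\geq 0$, the pointwise inequality $\psi(s,x)\leq \|\psi\|_\infty\, \mathbf{1}_{[0,T]}(s)\, f(x)$ yields the deterministic bound
\begin{equation*}
0 \leq \mu^\epsilon(\psi) \leq \|\psi\|_\infty \int_{[0,T]\times[0,\infty)} x\, f(x)\, \eta^\epsilon(ds,dx).
\end{equation*}
The right-hand side is tight by the hypothesis (applied with $t=T$ and this choice of $f$), and since a non-negative family of random variables dominated by a tight family is itself tight, the criterion is verified.

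I do not anticipate any serious obstacle: the substantial work of uniformly controlling the reflection measure was already carried out in the proof of Theorem~\ref{tightness of eta}, and the present lemma is essentially a soft topological statement. The only verification one must be careful with is that the topology chosen on $\mathbb{M}$ genuinely coincides with the vague topology on the weighted measures $x\,\eta^\epsilon$, so that Kallenberg's criterion can be applied verbatim.
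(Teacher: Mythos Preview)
Your proof is correct and follows essentially the same route as the paper: both first use the domination $\bigl|\int x\psi\, d\eta^\epsilon\bigr| \leq \|\psi\|_\infty \int_{[0,T]\times[0,\infty)} xf(x)\, \eta^\epsilon(ds,dx)$ (with $f \in C_c^\infty$, $f\geq \mathbf{1}_{[0,A]}$) to upgrade the hypothesis to tightness of integrals against arbitrary $\psi \in C_c([0,\infty)^2)$, and then conclude via the vague-topology tightness criterion for the random Radon measures $x\,\eta^\epsilon$. The only difference is packaging: you invoke Kallenberg's criterion as a black box, whereas the paper reproves it by hand---choosing a countable dense family $(\psi_k)\subset C_c$, metrizing $\mathbb{M}$ by $d(\eta,\eta')=\sum_k 2^{-k}\bigl(1\wedge|\int x\psi_k\,d\eta-\int x\psi_k\,d\eta'|\bigr)$, and arguing by diagonal extraction that the boxes $A_\lambda=\{\eta:|\int x\psi_k\,d\eta|\leq\lambda_k\ \forall k\}$ are relatively compact.
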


\begin{proof}
	First, notice that under our assumption on $(\eta^\epsilon)_{\epsilon \in (0,1])}$, for every $\psi \in C^\infty_c([0,\infty)\times [0,\infty))$, the family $\left( \Int{[0,t] \times [0,\infty)}{} x \psi(t,x) \: \eta^\epsilon (dt,dx) \right)_{\epsilon \in (0,1]}$ is tight. Indeed, let $\psi \in  C^\infty_c([0,\infty)\times [0,\infty))$. Then let $A,T>0$ such that $\textrm{supp}(\psi) \subseteq [0,T]\times [0,A]$, and take $f \in  C^\infty_c([0,\infty))$ such that $1_{[0,A]} \leq f$. From the inequality
	\begin{equation*}
		\abs{\int_{[0,\infty)\times[0,\infty)} x \psi(t,x) \: \eta^\epsilon (dt,dx)} \leq \norme{\psi}_{\infty} \int_{[0,T]\times [0,\infty)} x f(x) \eta^\epsilon (dt,dx)
	\end{equation*}
	since the right hand side of the inequality is tight by assumption, we get that the left hand side is tight as well.
    Second, let us turn now to the proof of the tightness. Taking a family $(\psi_k)_{k \in \N}\in C^\infty_c([0,\infty) \times [0,\infty))$ which is dense in $C_c([0,\infty) \times [0,\infty))$ for the uniform topology and letting $\varphi_k(t,x):=x \psi_k(t,x)$, we have that
    \begin{equation}
        d(\eta,\eta'):=\sum_{k \in \N} 2^{-k} \left( 1 \wedge \abs{\Int{}{} \varphi_k \: d \eta - \Int{}{} \varphi_k \: d \eta'} \right)
    \end{equation}
    defines a metric compatible with the topology on $\mathbb{M}$. Observe that by sequential extraction, for any sequence $\lambda \in [0,\infty)^\N$, the set
    \begin{equation*}
        A_\lambda:= \left\lbrace \eta \in \mathbb{M} \: : \: \forall k \in \N \; \abs{\Int{}{} \varphi_k \: d\eta} \leq \lambda_k \right\rbrace
    \end{equation*}
    is relatively compact in $\mathbb{M}$. Let $\delta >0$. By assumption on $(\eta^\epsilon)_{\epsilon \in (0,1]}$, for any $ k \in \N$ there exists $\lambda_k \in [0,\infty)$ such that
    \begin{equation*}
        \displaystyle{\sup_{\epsilon \in (0,1]}} \mathbb{P} \left( \abs{\Int{}{} \varphi_k \: d\eta^\epsilon } > \lambda_k \right) < \delta 2^{-k}.
    \end{equation*}
    We deduce by subadditivity that
    \begin{equation*}
        \displaystyle{\sup_{\epsilon \in (0,1]}} \mathbb{P} \left( \exists k \in \N, \; \abs{\Int{}{} \varphi_k \: d\eta^\epsilon } > \lambda_k \right) < \delta.
    \end{equation*}
    In other words,
    \begin{equation*}
        \displaystyle{\sup_{\epsilon \in (0,1]}} \mathbb{P} \left( \eta^\epsilon \notin A_\lambda \right) < \delta
    \end{equation*}
    since $A_\lambda$ is relatively compact, this concludes the proof.
\end{proof}

%

\bibliographystyle{siam}
\bibliography{sample}

\end{document}